\newcommand{\R}{{\mat R}}
\newcommand{\Bz}{\mathbf{B}}
\newcommand{\Z}{{\mat Z}}
\newcommand{\N}{{\mat N}}
\newcommand{\bR}{{\bm{ R}}}
\newcommand{\C}{{\mat C}}
\newcommand{\Cz}{{\mathbf{ C}}}
\newcommand{\bL}{{\mathbf{L}}}
\newcommand{\ben}{\begin{eqnarray*}}
\newcommand{\en}{\end{eqnarray}}
\newcommand{\enn}{\end{eqnarray*}}
\newcommand{\bF}{\mathbf{F}}
\newcommand{\bS}{\mathbf{S}}
\newcommand{\J}{\mathcal{J}}
\renewcommand{\L}{\mathcal{L}}
\renewcommand{\i}{{\rm i}}
\newcommand{\bgamma}{{\boldsymbol{\gamma}}}
\newcommand{\bomega}{{\bm \omega}}
\renewcommand{\d}{{\rm d}}
\newcommand{\e}{{\bm e}}
\renewcommand{\v}{{\bm v}}
\newcommand{\mat}{\mathbb}
\renewcommand{\t}{{\bm t}}
\newcommand{\n}{{\bm n}}
\renewcommand{\v}{{\bm v}}
\newcommand{\bx}{{\bm x}}
\newcommand{\by}{{\bm y}}
\newcommand{\bz}{{\bm z}}
\newcommand{\bj}{{\bm j}}
\newcommand{\balpha}{{\bm \alpha}}
\newcommand{\bbeta}{{\bm \beta}}
\renewcommand{\epsilon}{\varepsilon}
\newtheorem{theorem}{Theorem}[section]
\newtheorem{lemma}[theorem]{Lemma}
\newtheorem{corollary}[theorem]{Corollary}
\newtheorem{definition}[theorem]{Definition}
\newtheorem{remark}[theorem]{Remark}
\newtheorem{proposition}[theorem]{Proposition}
\newtheorem{assumption}[theorem]{Assumption}
\begin{document}
\renewcommand{\theequation}{\arabic{section}.\arabic{equation}}

\begin{titlepage}
\title{\bf Sliced Spectral Analysis and Geometric Mechanisms of Radiation for Periodic Elliptic Operators}
\author{Ruming Zhang\thanks{Institute for Mathematics, Technische Universit{\"a}t Berlin, Berlin, Germany; \texttt{ruming.zhang@tu-berlin.de}}
}
\end{titlepage}
\maketitle

\begin{abstract}
Radiation in higher-dimensional periodic media is fundamentally more difficult than in one dimension because the multidimensional spectral parameterization no longer admits the one-dimensional analytic structure underlying classical complex-analytic methods. We introduce a sliced spectral analysis (SSA), which incorporates the observation direction as an explicit parameter and decomposes the Brillouin-zone integral into an outer integration over spectral slices and an inner one-dimensional spectral problem on each directional slice. This restores, on every slice, the analytic structure of the one-dimensional theory and thereby provides a unified analytic framework for radiation in arbitrary dimensions.

Making the observation direction explicit has a second, independent consequence. It makes direction-dependent spectral geometry accessible and, in particular, enables the definition of a grazing set, a genuinely higher-dimensional geometric object separating the real and complex Fermi surfaces and characterizing the transition between propagating and evanescent modes. This geometric structure naturally leads to a decomposition of the limiting absorption solution into evanescent, non-grazing, and grazing components, each associated with a distinct radiation mechanism.

Finally, we establish the necessity of the three-component decomposition through explicit examples. The grazing contribution may become the leading asymptotic term. The Helmholtz Green's function provides a degenerate example in which the limitation of the classical propagating--regular decomposition becomes explicit: the classical decomposition assigns separate leading-order contributions to the propagating and regular parts, whereas these contributions are naturally associated with the grazing mechanism and cancel only after recombination.
\end{abstract}

\bigskip
\noindent\textbf{Keywords:}
periodic elliptic operators; Floquet--Bloch theory; limiting absorption principle; radiation conditions; asymptotic analysis.

\medskip
\noindent\textbf{MSC:}
Primary 35J10; Secondary 35P25, 35B27, 35C20.

\section{Introduction}

The principal obstacle in extending the one-dimensional theory to higher-dimensional periodic elliptic operators is the loss of the analytic framework that makes complex-analytic methods effective. Although Floquet--Bloch theory provides a complete spectral representation, the multidimensional spectral parameter space no longer possesses the one-dimensional analytic properties on which contour deformation, residue analysis, and asymptotic expansions rely.

The underlying difficulty is not merely the loss of one-dimensional analyticity, but also the way the classical spectral representation is organized. Classical Floquet--Bloch theory organizes the spectral representation entirely in terms of quasi-momentum. While this is natural for spectral analysis, outgoing radiation is intrinsically directional, since it simultaneously determines the relevant spectral geometry and the associated one-dimensional analytic setting. This mismatch obscures the analytic mechanisms of radiation.

\begin{figure}[!t]
\centering

\resizebox{0.70\textwidth}{!}{%
\begin{tikzpicture}[
every node/.style={font=\footnotesize,align=center}
]


\node[font=\bfseries] (ssa) at (0,0)
{Sliced Spectral Analysis};

\node (restore) at (0,-1.0)
{One-dimensional analytic structure};


\node[font=\bfseries] (ana) at (-3.4,-2.1)
{Analytic Structure};

\node (slice) at (-3.4,-3.0)
{Spectral slicing};

\node (contour) at (-3.4,-3.9)
{Contour deformation};

\node (residue) at (-3.4,-4.8)
{Residue calculus};


\node[font=\bfseries] (geo) at (3.4,-2.1)
{Geometric Structure};

\node (fermi) at (3.4,-3.0)
{Complexified Fermi surface};

\node (grazing) at (3.4,-3.9)
{Grazing set};

\node (geometry) at (3.4,-4.8)
{Radiation geometry};


\node[font=\bfseries] (decomp) at (0,-6.0)
{Three-component decomposition};

\node (parts) at (0,-6.5)
{ Evanescent \;|\; Non-grazing \;|\; Grazing};

\node[font=\bfseries] (final) at (0,-7.5)
{Geometric Mechanisms of Radiation};


\draw[->] (ssa)--(restore);

\draw[->] (restore)--(ana);
\draw[->] (restore)--(geo);

\draw[->] (ana)--(slice);
\draw[->] (slice)--(contour);
\draw[->] (contour)--(residue);

\draw[->] (geo)--(fermi);
\draw[->] (fermi)--(grazing);
\draw[->] (grazing)--(geometry);

\draw[->] (residue) -- (decomp);
\draw[->] (geometry) -- (decomp);

\draw[->] (parts)--(final);

\end{tikzpicture}
}

\caption{Conceptual overview of sliced spectral analysis.}
\label{fig:ssa}

\end{figure}

To overcome this obstacle, we introduce a sliced spectral analysis (SSA), which promotes the observation direction to a structural variable of the spectral representation. The resulting reformulation transforms the multidimensional Floquet--Bloch spectral problem into a family of one-dimensional analytic problems by decomposing the quasi-momentum into components parallel and transverse to the observation direction. This restores the one-dimensional analytic structure required for contour deformation, residue calculus, and asymptotic expansions. On each spectral slice, the recovered one-dimensional analytic framework consequently permits a directional complexification of the Fermi surface. This local complex geometry provides the basis for the outgoing contour deformation and reveals geometric structures that are invisible on the real Fermi surface. Because the construction is dimension-independent, it remains applicable even for complicated spectral geometries, including band crossings.

Within the SSA framework, a previously unrecognized grazing set emerges. This geometric structure has no analogue in one dimension and gives rise to a distinct asymptotic mechanism of outgoing radiation. As we shall show, this mechanism is not resolved by the conventional propagating--regular decomposition, and therefore requires a new geometric decomposition of radiation.

Within the SSA framework, every outgoing limiting absorption solution admits the decomposition
\[
u
=
u_{\mathrm{evan}}
+
u_{\mathrm{ng}}
+
u_{\mathrm{gra}},
\]
where the three components correspond to evanescent, non-grazing, and grazing spectral modes. The non-grazing and grazing components arise from different spectral structures and possess distinct far-field asymptotic regimes. This decomposition separates the distinct mechanisms of outgoing radiation, replacing the conventional propagating--regular decomposition with one dictated by the underlying geometry. The logical structure of the proposed framework is illustrated in Figure ~\ref{fig:ssa}.

\subsection{The periodic elliptic operator and the limiting absorption principle}

We consider the equation
\begin{equation}
\label{eq:elliptic}
(\mathcal L-\lambda I)u=f
\qquad\text{in }\mathbb R^d,
\end{equation}
where \(\mathcal L\) is a self-adjoint periodic elliptic operator and
\(\lambda\in\mathbb R\) is a prescribed frequency. Precise assumptions
on \(\mathcal L\) will be stated in Section~\ref{sec:def_not}.

The analysis of outgoing radiation becomes nontrivial precisely when $\lambda$ lies in the spectrum of $\mathcal{L}$. In this case the resolvent
\(
(\mathcal L-\lambda I)^{-1}
\)
fails to exist and the construction of physically meaningful outgoing solutions becomes a nontrivial problem.

The standard approach is to interpret outgoing solutions through the limiting absorption principle (LAP), namely,
\[
u
=
\lim_{\varepsilon\to0^+}
u_\varepsilon.
\]
where $u_\epsilon$ solves
\begin{equation}
\label{eq:damp}
(\mathcal L-(\lambda+i\varepsilon)I)u_\varepsilon=f.
\end{equation}
The limiting absorption principle provides the appropriate notion of an outgoing solution, but it does not by itself distinguish the different mechanisms contributing to radiation. The objective of the present paper is therefore not merely to establish the limiting absorption solution, but to develop an analytic framework that reveals the underlying mechanisms of outgoing radiation.

\subsection{Previous work and remaining challenges}

The limiting absorption principle has a long history in the spectral
theory of elliptic operators. When the coefficients are decaying, the existence of LAP solutions
has been established in suitable weighted \(L^p\)-spaces; see, for
instance,
\cite{Odeh1961,Agmon1975,Rodni2015,Cacci2016,Gold2004}.
These approaches rely heavily on the decay of the perturbation and
cannot be directly extended to periodic media.

For periodic elliptic operators, Floquet--Bloch theory provides a natural framework for spectral analysis, in which the continuous spectrum is encoded by the Bloch variety and the associated Fermi surfaces; see
\cite{Bloch1929,Floquet1883,Kuchm2016,Kuchm2023}. Important existence results
for LAP solutions were obtained in
\cite{Gerard1998,Leven1998}, while more explicit Floquet--Bloch constructions were subsequently developed in \cite{Radosz2015,Mandel2019}.

Recent work has considerably advanced the understanding of Green's function asymptotics in periodic media. Near spectral edges, precise asymptotics have been obtained under non-degenerate band-edge assumptions \cite{Kha2017}, while anisotropic periodic media have been analyzed through decompositions into propagating and evanescent components together with caustic phenomena \cite{Ruks2022}. In contrast, the present work develops a direction-dependent spectral representation via SSA, from which the grazing set and the corresponding decomposition emerge naturally.

In the one-dimensional setting,  the LAP and the associated radiation field for periodic waveguides have been extensively studied by a variety of methods, including resolvent analysis, Floquet decompositions, singular perturbations, and energy methods \cite{Hoang2011,Fliss2015,Kirsc2017a,Kirsc2017,Schweizer2023}. These works provide a comprehensive understanding of outgoing radiation in one-dimensional periodic settings.

Despite substantial progress on the existence theory, higher-dimensional periodic elliptic operators remain analytically much less understood than their one-dimensional counterparts. The lack of an effective analytic framework prevents a systematic asymptotic analysis of outgoing radiation. The sliced spectral analysis introduced in this paper fills this gap by recovering the analytic structure required for contour deformation, residue calculus, and asymptotic expansions.

\subsection{Main contributions}

The contributions of the present paper are organized into four stages.

\bigskip
\noindent
{\bf Stage I. A sliced spectral framework (SSA). }The guiding principle is to exploit the intrinsic directional information of the spectral geometry as an organizing variable for analysis. Following this idea, we introduce a sliced spectral analysis (SSA), which restores the one-dimensional analytic structure unavailable in the original multidimensional Floquet--Bloch formulation. By treating the observation direction $\n$ as a parameter, SSA transforms the spectral problem into a family of one-dimensional analytic problems, thereby recovering contour deformation, residue calculus, and asymptotic analysis in higher dimensions. Sections \ref{sec:def_not}--\ref{sec:deform} develop the analytic and geometric foundations of SSA,   including the directional complexification of the Fermi surface, admissible contour deformations, and the characterization of the grazing set.

\bigskip
\noindent
{\bf Stage II. A geometric decomposition of radiation. }Building on SSA, Section \ref{sec:lap} establishes that every outgoing limiting absorption solution admits the decomposition
\[
u
=
u^{\rm evan}
+
u^{\rm ng}
+
u^{\rm gra},
\]
where the three components correspond to evanescent, non-grazing, and grazing spectral modes, respectively. The decomposition is dictated by the underlying spectral geometry and provides the analytic framework for the asymptotic characterization developed in the subsequent sections.

\bigskip
\noindent
{\bf Stage III. Geometric characterization of radiation mechanisms. }
The significance of the three-component decomposition is that its components are governed by different asymptotic laws. Sections \ref{sec:asym_non_gra} and \ref{sec:decay_gra} show that the non-grazing and grazing components possess distinct far-field asymptotic regimes, with leading-order behavior determined by the geometry of the Fermi surface and the grazing set, respectively. This establishes that the three components represent genuinely distinct mechanisms of outgoing radiation.

\bigskip
\noindent
{\bf Stage IV. Necessity of the geometric decomposition. }The final stage explains why the classical propagating--regular decomposition is insufficient for describing multidimensional radiation. Section \ref{sec:geo_gra} presents two representative examples. The first shows that the grazing contribution may dominate the far-field asymptotics, demonstrating that it constitutes an independent radiation mechanism. The second shows that, even when the grazing contribution vanishes in the final asymptotics of the Helmholtz Green's function, the conventional decomposition assigns leading-order contributions to the propagating and regular parts, whose cancellation obscures the underlying mechanism. The SSA decomposition resolves this by separating the grazing contribution explicitly according to its geometric origin.

\bigskip
\noindent
Taken together, these results establish a new analytic framework for outgoing radiation in higher-dimensional periodic media, in which the evanescent, non-grazing, and grazing contributions are identified as three fundamentally distinct radiation mechanisms.

\bigskip
\noindent
The remainder of the paper develops the SSA framework in a progressive manner.
Sections~\ref{sec:def_not} and~\ref{sec:ft} establish its spectral and geometric foundations by introducing
directional spectral slices, the associated Floquet structure, and the Fermi surfaces.
Section~\ref{sec:complex_surface} shows how this directional formulation naturally leads to the complexification
of the Fermi surface and reveals the grazing geometry.
Section~\ref{sec:deform} then combines these geometric ingredients into admissible contour
deformations for the Floquet--Bloch fiber operators.
With this analytic framework in place, Section~\ref{sec:lap} derives the geometric decomposition
of outgoing radiation, while Sections~\ref{sec:asym_non_gra} and~\ref{sec:decay_gra} characterize the asymptotic behavior of
its non-grazing and grazing components. Finally, Section~\ref{sec:geo_gra} illustrates, through explicit
examples, why the grazing contribution represents an essential radiation mechanism and
why the resulting three-component decomposition is necessary.

\section{Geometric and spectral framework}
\label{sec:def_not}

In this section, we introduce the periodic setting and the
spectral objects that will be used throughout the paper.
We first define the periodic elliptic operator and the
associated lattice geometry. We then introduce the
directional coordinates underlying the sliced spectral
analysis and the associated family of quasi-periodic fiber operators.

\subsection{Periodic elliptic operators}
\label{sec:def_L}

Throughout the paper, we consider the second-order elliptic
operator
\[
\mathcal L
=
-\nabla\cdot(A(\bx)\nabla)+V(\bx)
\]
acting on \(L^2(\mathbb R^d)\), where
\[
A\in (W^{1,\infty}(\mathbb R^d))^{d\times d},
\qquad
V\in L^\infty(\mathbb R^d)
\]
are real-valued and periodic with respect to a lattice
\(\Lambda\).

We assume that \(A(\bx)\) is symmetric and uniformly elliptic, i.e.,
there exists a constant \(c_0>0\) such that
\[
{\bm\xi}^T A(\bx){\bm\xi}
\ge
c_0|{\bm\xi}|^2,
\qquad
\forall \bx,{\bm\xi}\in\mathbb R^d.
\]
Define
\[
H^1(A;\mathbb R^d)
=
\left\{
u\in H^1(\mathbb R^d):
\nabla\cdot(A(\cdot)\nabla u)\in L^2(\mathbb R^d)
\right\}.
\]
Under these assumptions, $
\mathcal L:
H^1(A;\mathbb R^d)
\subset
L^2(\mathbb R^d)
\rightarrow
L^2(\mathbb R^d)
$
is a self-adjoint operator.

\subsection{Periodicity, unit cell and its dual cell}
\label{sec:bz}

Let $\Lambda\in \R^{d\times d}$ be an invertible matrix. The corresponding unit cell and first Brillouin zone are defined by:
\[
\Omega:=\left\{\Lambda\widetilde{\bx}:\,\widetilde{\bx}\in(-1/2,1/2]^d\right\}\subset\R^d\quad\text{ and }\quad
\Bz:=\left\{\Lambda^*\widetilde{\by}:\,\widetilde{\by}\in(-1/2,1/2]^d\right\}\subset\R^d
\]
where $\Lambda^*=2\pi\Lambda^{-T}$. Throughout the paper we specialize to $\Lambda=2\pi {\rm I}_d$ so that
\[
\Omega=(-\pi,\pi]^d,\quad \Bz:=(-1/2,1/2]^d.
\]
An illustration  of a  bi-periodic structure in $\R^2$  and  its unit cell $\Omega$ is shown in Figure \ref{fig:bi_per}.

\begin{figure}[htbp]
    \begin{minipage}{0.60\textwidth}
        \centering
        \includegraphics[width=0.9\textwidth]{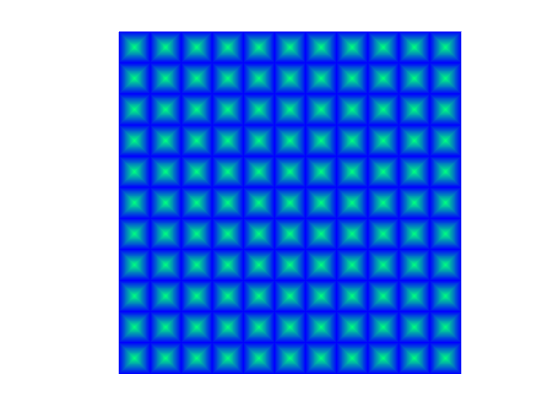}
    \end{minipage}%
    \hfill%
    \begin{minipage}{0.28\textwidth}
        \centering
        \includegraphics[width=\textwidth]{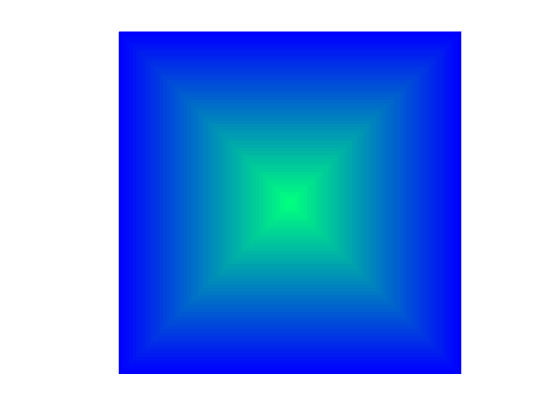}
    \end{minipage}
\caption{A bi-periodic structure in $\R^2$ (left) and one unit cell (right).}
\label{fig:bi_per}
\end{figure}

The faces of the Brillouin zone are denoted by
\[
\Cz_j^\pm
=
\{\alpha\in\overline{\mathcal B}:\alpha_j=\pm\tfrac12\},
\qquad
j=1,\ldots,d.
\]
Hence,
\[
\partial\Bz
=
\bigcup_{j=1}^d
(\Cz_j^+\cup\Cz_j^-).
\]
A two-dimensional illustration is shown in Figure~\ref{fig:B}. The outward unit normal vectors on
\(\Cz_j^\pm\)
are
\(\pm \e_j\),
where
\(\e_j\)
denotes the \(j\)-th canonical basis vector of
\(\mathbb R^d\).

\begin{figure}
\centering
\includegraphics[width=0.5\textwidth]{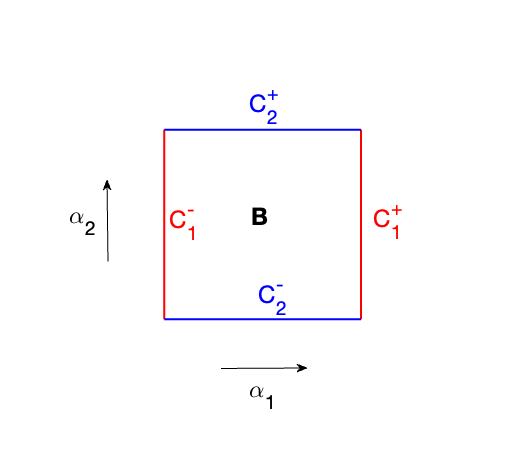}
\caption{Definitions in the dual lattice cell.}
\label{fig:B}
\end{figure}

\subsection{Directional coordinates and spectral slices}
\label{sec:slice_coordinate}

The first step of SSA is a directional decomposition of the Brillouin zone. Let
\[
\n\in\mathbb S^{d-1}
\]
be a fixed unit vector, which will later represent the observation
direction in the physical space.

Choose orthonormal vectors
\[
\{\t_1,\dots,\t_{d-1}\}
\]
spanning the orthogonal complement of \(\n\). Then every Floquet
parameter \(\balpha\in\mathbb R^d\) admits the representation
\[
\balpha
=
\gamma_1\t_1+\cdots+\gamma_{d-1}\t_{d-1}
+s\n
=
\bgamma\t+s\n,
\]
where
\[
\bgamma=(\gamma_1,\dots,\gamma_{d-1})
\in\mathbb R^{d-1},
\qquad
s\in\mathbb R,
\]
and
\[
\bgamma\t
:=
\gamma_1\t_1+\cdots+\gamma_{d-1}\t_{d-1}.
\]

In these coordinates, the Brillouin zone \(\Bz\) can be written as
\[
\Bz
=
\Big\{
\bgamma\t+s\n:
\bgamma\in D_\n,\;
s\in(\ell_l(\bgamma),\ell_r(\bgamma))
\Big\},
\]
where \(D_\n\subset\mathbb R^{d-1}\) is the projection of \(\Bz\) onto
the transverse coordinates.

For each fixed \(\bgamma\in D_\n\), we define the associated
\emph{spectral slice}
\[
\ell_{\bgamma}
:=
\{
\bgamma\t+s\n:
s\in\mathbb R
\}.
\]
The intersection of \(\ell_\bgamma\) with the closure of the
Brillouin zone is a line segment,
\[
\ell_\bgamma\cap\overline{\Bz}
=
\{
\bgamma\t+s\n:
s\in[\ell_l(\bgamma),\ell_r(\bgamma)]
\},
\]
where
\[
\ell_l(\bgamma)
<
\ell_r(\bgamma).
\]
Thus,  \(\bgamma\) labels the spectral slices, whereas
\(s\) parameterizes each slice. The family $\{\ell_\bgamma\}_{\bgamma\in D_\n}$  will be referred to as the spectral slices, which form the basis of the sliced spectral analysis.
Figure \ref{fig:new_coordinate} illustrates this construction for \(d=2\).

\begin{figure}[h]
\centering
\includegraphics[width=0.6\textwidth]{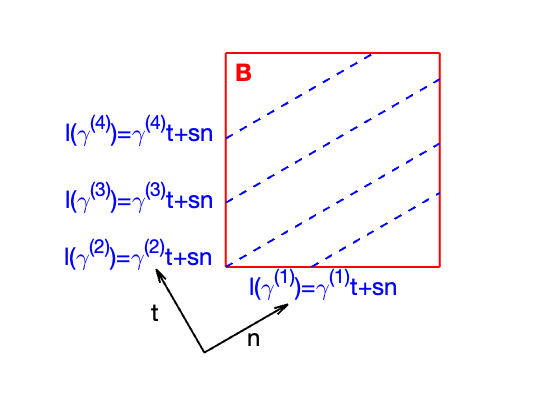}
\caption{The definitions in the new coordinate.}
\label{fig:new_coordinate}
\end{figure}

This directional decomposition forms the geometric foundation of
the SSA framework. By fixing $\bgamma$ and regarding $s$ as the only active variable, the multidimensional spectral problem is reduced to a family of one-dimensional problems, thereby recovering the one-dimensional analytic structure underlying the subsequent analysis.

\subsection{Analytic family of quasi-periodic operators}
\label{sec:ana_opera}

The analytic dependence of the quasi-periodic fiber operators on the Floquet parameter is the key analytic ingredient of SSA. It enables the complexification of the spectral parameter and provides the foundation for the contour deformation and residue analysis developed later. We therefore briefly recall Kato's notion of analytic families of operators, following \cite{Fliss2015}; see also \cite{Kato1995,Reed1978}.


\begin{definition}[Definition 3, \cite{Fliss2015}]
\label{def:ana_operator_ubdd}
Suppose $T(\bz)$ is unbounded. Then we distinguish two notions of
analyticity.

\begin{enumerate}
\item
The family $T(\bz)$ is analytic of type (A) if $D(T(\bz))=D$ for all $\bz\in\mathcal O\subset\C$, and for every $f\in D$ the mapping
\[
\bz\mapsto T(\bz)f
\]
is analytic from $\mathcal O$ into $D$.

\item
The family $T(\bz)$ is analytic if there exist an analytic family
of type (A), denoted by $\widetilde T(\bz)$, and a bounded
analytic family of isomorphisms $S_\bz$ such that
\[
T(\bz)
=
S_\bz \widetilde T(\bz)S_\bz^{-1}.
\]
In this case,
\[
D(T(\bz))
=
S_\bz D
\quad\text{ with }
D=D(\widetilde T(\bz)).
\]
\end{enumerate}
\end{definition}

We now introduce the quasi-periodic fiber operators associated
with  $\mathcal L$. For any
$\balpha\in\C^d$, let $H^s_\balpha(\Omega)$ denote the Sobolev space of $\balpha$-quasi-periodic functions, namely
\[
H^s_\balpha(\Omega)
:=
\Big\{\phi|_{\Omega}:\,
\phi\in H^s_{loc}(\R^d),\,
\phi(\bx+2\pi\bj)
=
e^{\,\i2\pi\balpha\cdot\bj}
\phi(\bx),\,\forall\,\bx\in\R^d,\ \bj\in\Z^d
\Big\}.
\]
We further define
\[
H^1_\balpha(A;\Omega)
=
\Big\{
\phi\in H^1_\balpha(\Omega):
\nabla\!\cdot(A(\cdot)\nabla\phi)
\in L^2(\Omega)
\Big\}.
\]
The quasi-periodic fiber operator is then given by
\[
\mathcal L_\balpha:
H^1_\balpha(A;\Omega)
\subset
L^2(\Omega)
\rightarrow
L^2(\Omega),
\]
with
\[
\mathcal L_\balpha\phi
=
-\nabla\!\cdot(A(\cdot)\nabla\phi)
+
V(\cdot)\phi.
\]

To transfer the quasi-periodicity from the function space to the operator, we introduce the gauge transformation
\[
S_\balpha\phi(\bx)
=
e^{-\i\balpha\cdot\bx}\phi(\bx).
\]
Then $S_\balpha$ is an isomorphism between
$H^1_\balpha(A;\Omega)$ and $H^1_0(A;\Omega)$, where the
subscript \(0\) denotes periodic functions.
Introducing
\[
\widetilde{\mathcal L}_\balpha
=
S_\balpha^{-1}
\mathcal L_\balpha
S_\balpha,
\]
we obtain a family of operators acting on the fixed space
$H^1_0(A;\Omega)$. By direct computation,
$\widetilde{\mathcal L}_\balpha$ depends polynomially on
$\balpha$, and therefore forms an analytic family of type (A) in
the sense of Definition~\ref{def:ana_operator_ubdd}.

Finally, fix a direction
$
\n\in\mathbb S^{d-1}
$
and let
$
\balpha
=
\bgamma\t+s\n.
$
For every fixed
$
\bgamma\in D_{\n},
$
the family
$
s
\longmapsto
\widetilde{\mathcal L}_{\bgamma\t+s\n}
$
is an analytic family of type (A) with respect to the single
complex variable \(s\).

This observation is the key spectral ingredient underlying the
SSA framework. It reduces the original multidimensional spectral problem to a family of one-dimensional analytic problems parameterized by the transverse variable $\bgamma$, thereby restoring the one-dimensional analytic structure underlying the subsequent contour deformation. This one-dimensional analytic structure will serve as the starting point for the Floquet analysis in the next section.

\section{The Floquet theory and spectral structure}
\label{sec:ft}

In this section, we establish the spectral and geometric framework underlying the SSA. We first recall the Floquet--Bloch description of periodic elliptic operators and then introduce the geometric structures that will be used throughout the paper.

The spectrum of a periodic elliptic operator is described by its band functions, whose level sets define the Fermi surfaces. These surfaces constitute the central geometric object of the present work. We therefore begin with the Floquet spectral structure before introducing the geometric assumptions required for the subsequent analysis.

\subsection{Floquet theory and band functions}
\label{sec:ft1}

The Floquet--Bloch theory reduces the spectral analysis of the periodic operator
\(\mathcal L\)
to a family of quasi-periodic fiber operators
\(\mathcal L_\alpha\)
defined on the bounded periodicity cell
\(\Omega\). More precisely,
\begin{equation}
\label{eq:FBT}
\sigma(\mathcal L)
=
\bigcup_{\balpha\in\Bz}
\sigma(\mathcal L_{\balpha}),
\end{equation}
Since each fiber operator acts on a bounded domain, it has compact resolvent and hence a discrete spectrum. We recall the following classical result.

\begin{theorem}[Lemma 5.2, \cite{Kuchm2016}]
    \label{th:kuch1}
For any $\balpha\in\R^d$, the operator $\L_\balpha$ is self-adjoint and therefore admits an increasing sequence of eigenvalues:
\[
\sigma(\L_\balpha)=\big\{\mu_j(\balpha):\, \mu_1(\balpha)\leq \mu_2(\balpha)\leq\cdots\leq\mu_m(\balpha)\leq\cdots\mapsto\infty\big\}.
\]
The function $\mu_j(\balpha)$ is called the {\bf $j$-th band function}.  For each eigenvalue $\mu_j(\balpha)$, there is also a related family of eigenfunctions in $L^2(\Omega)$ such that
\[
\L_\balpha\phi_j(\balpha,\cdot)=\mu_j(\balpha)\phi_j(\balpha,\cdot).
\]
\end{theorem}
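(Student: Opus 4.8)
The plan is to reduce the statement to the standard spectral theory of a uniformly elliptic operator with compact resolvent on a bounded domain, after conjugating away the quasi-periodic boundary condition. Concretely, I would first pass from $\L_\balpha$ to the unitarily equivalent operator $\widetilde{\L}_\balpha=S_\balpha^{-1}\L_\balpha S_\balpha$ introduced in Section~\ref{sec:def_not}. Since $|e^{-\i\balpha\cdot\bx}|=1$, the map $S_\balpha$ is a bijective isometry of $L^2(\Omega)$, so $\L_\balpha$ and $\widetilde{\L}_\balpha$ share their spectra, eigenvalues and multiplicities, with eigenfunctions related by $\phi\mapsto S_\balpha\phi$. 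It therefore suffices to work with $\widetilde{\L}_\balpha\phi=-(\nabla+\i\balpha)\cdot\big(A(\nabla+\i\balpha)\phi\big)+V\phi$ on the periodic form domain $H^1_0(\Omega)$ (subscript $0$ = $n$-periodic, as in Section~\ref{sec:def_not}); note that this reduction works for every $\balpha\in\R^n$, since $A$, $V$ and the ellipticity constant $c_0$ do not depend on $\balpha$.

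Next I would establish self-adjointness by the form method. Introduce the sesquilinear form
\[
a_\balpha(u,v):=\int_\Omega A(\bx)(\nabla+\i\balpha)u\cdot\overline{(\nabla+\i\balpha)v}\,\d\bx+\int_\Omega V(\bx)\,u\,\bar v\,\d\bx
\]
on $H^1_0(\Omega)$. Symmetry of $A(\bx)$ together with the realness of $A$ and $V$ makes $a_\balpha$ Hermitian; boundedness $A,V\in L^\infty$ gives continuity on $H^1_0(\Omega)$; and uniform ellipticity $\xi' A\xi\ge c_0|\xi|^2$ together with $V\in L^\infty$ gives, after absorbing the cross terms $\i\balpha$ by Young's inequality, a G\aa rding-type coercivity estimate $a_\balpha(u,u)+C_\balpha\|u\|_{L^2(\Omega)}^2\ge \tfrac{c_0}{2}\|u\|_{H^1(\Omega)}^2$. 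Hence $a_\balpha$ is a closed, densely defined, symmetric form bounded below, and Kato's first representation theorem (\cite{Kato1995}) associates to it a unique self-adjoint operator bounded below whose operator domain is exactly $H^1_0(A;\Omega)$; this operator is $\widetilde{\L}_\balpha$, which proves the self-adjointness of $\L_\balpha$.

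For the discrete-spectrum part I would use that $H^1_0(\Omega)\hookrightarrow L^2(\Omega)$ is compact — most cleanly via Fourier series on the flat torus $(\R/2\pi\Z)^n$, where the multiplier $(1+|k|^2)^{-1/2}$ on $\ell^2(\Z^n)$ is manifestly compact. A self-adjoint operator bounded below whose form domain embeds compactly in the ambient Hilbert space has compact resolvent, hence purely discrete spectrum: real eigenvalues of finite multiplicity with no finite accumulation point and accumulating only at $+\infty$. Listing these eigenvalues in nondecreasing order with multiplicity defines the band functions $\mu_1(\balpha)\le\mu_2(\balpha)\le\cdots\to\infty$ (equivalently, $\mu_j(\balpha)$ is the Courant--Fischer min--max quotient of $a_\balpha$ over $j$-dimensional subspaces of $H^1_0(\Omega)$). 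Finally, the spectral theorem for operators with compact resolvent supplies an $L^2(\Omega)$-orthonormal basis of eigenfunctions $\psi_j$ of $\widetilde{\L}_\balpha$, and $\phi_j(\balpha,\cdot):=S_\balpha^{-1}\psi_j=e^{\i\balpha\cdot\bx}\psi_j\in H^1_\balpha(A;\Omega)$ are the claimed $\balpha$-quasi-periodic eigenfunctions of $\L_\balpha$ with eigenvalue $\mu_j(\balpha)$, with elliptic regularity ensuring they lie in the stated domain.

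The only genuinely nonroutine point is the justification of the compact embedding and the closedness of the form on the periodic domain — i.e.\ that identifying opposite faces of $\Omega$ does not spoil Rellich's theorem — and the torus viewpoint above disposes of this cleanly; everything else is textbook spectral theory. (A version of the G\aa rding constant $C_\balpha$ and the resulting resolvent bound that is locally uniform in $\balpha$ will be needed later for the continuity/analyticity-in-$\balpha$ statements, but is not required for this theorem.)
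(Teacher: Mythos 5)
Your proof is correct. Note, however, that the paper offers no proof of this statement at all: Theorem \ref{th:kuch1} is imported verbatim as Lemma 5.2 of \cite{Kuchm2016}, so there is nothing internal to compare against. What you have written is the standard argument one finds in that reference (and in \cite{Reed1978}): conjugate by the unimodular multiplier $S_\balpha$ to pass to the periodic form domain, establish self-adjointness via a Hermitian, G\aa rding-coercive sesquilinear form and the first representation theorem, and deduce discrete spectrum accumulating only at $+\infty$ from the compactness of $H^1_{\rm per}(\Omega)\hookrightarrow L^2(\Omega)$ on the torus, with the eigenfunctions supplied by the spectral theorem and the min--max characterization giving the nondecreasing labelling. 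Two small remarks: the paper writes the conjugation as $\widetilde{L}_\balpha=S_\balpha^{-1}\L_\balpha S_\balpha$ while your eigenfunction formula $\phi_j=S_\balpha^{-1}\psi_j$ uses the opposite convention, so fix one ordering consistently (the substance is unaffected since $S_\balpha$ is unitary); and the final appeal to elliptic regularity is unnecessary, because an $L^2$-eigenfunction with $\L_\balpha\phi=\mu\phi\in L^2(\Omega)$ lies in the operator domain by definition. Your closing observation that a $\balpha$-locally-uniform G\aa rding constant will be needed later for continuity and analyticity of the $\mu_j$ is apt, since that is exactly what feeds into Theorem \ref{th:kuch2}.
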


The graphs of the band functions form the dispersion relation.
The corresponding dispersion relation is illustrated in Figure \ref{fig:dispersion}.

\begin{figure}[t]
\centering
\includegraphics[width=0.6\textwidth]{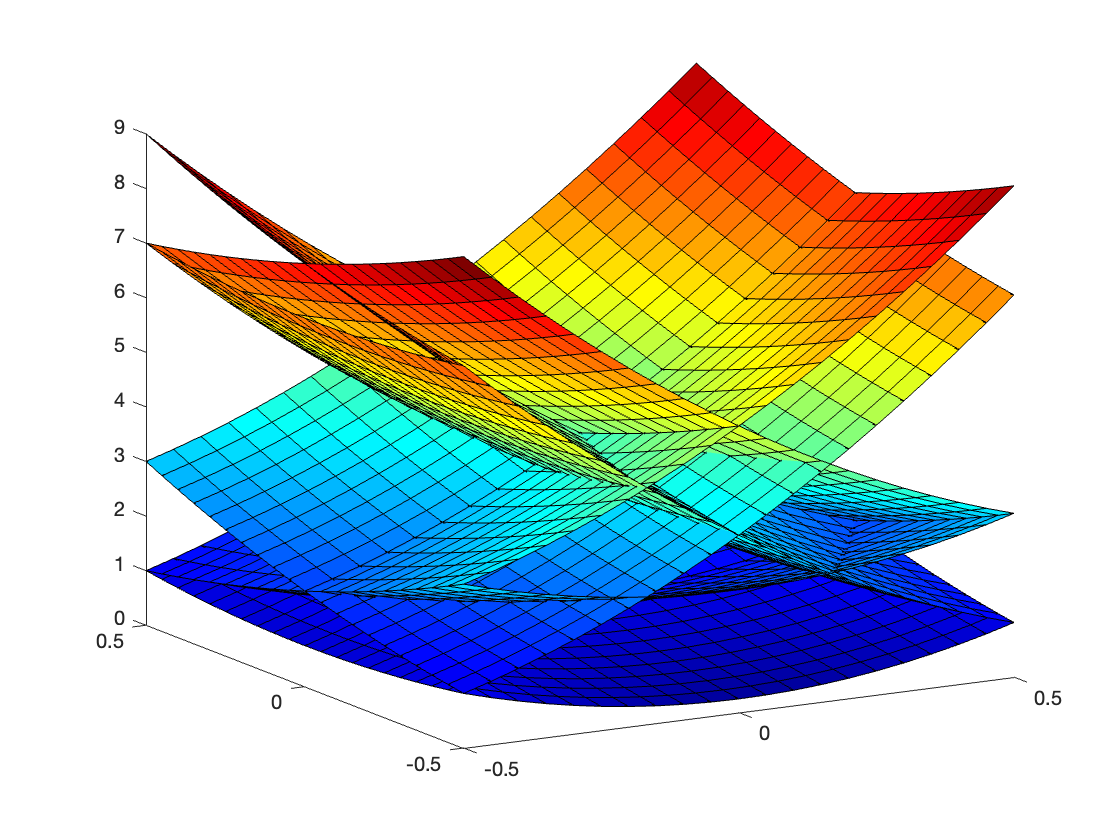}
\caption{The band functions in $\R^2$.}
\label{fig:dispersion}
\end{figure}

The following properties of the band functions will be used repeatedly throughout the paper.

\begin{theorem}[Theorem 5.5, \cite{Kuchm2016}]
    \label{th:kuch2}
The band functions have the following properties:    
\begin{enumerate}
    \item The band functions $\mu_j(\balpha)$ are globally Lipschitz continuous and piecewise real analytic. With a proper scaling, the eigenfunction $\phi_j(\balpha,\cdot) $ also depends Lipschitz continuously and piecewise analytically on $\balpha$.
    \item The graph of the multiple valued mapping
    \[
    \balpha\in\R^d\quad\mapsto\quad \sigma(\L_\balpha)
    \]
    coincides with the dispersion relation of $\L$.
    \item The dispersion relation is $d$-periodic with respect to $\balpha$ with the unit cell $\Bz$, thus it is sufficient to consider it only over the Brillouin zone $\Bz$.
    \item The dispersion relation is symmetric with respect to the mapping $\balpha\mapsto-\balpha$ when the functions $A$ and $V$ are real.
    \item The spectrum $\sigma(\L)$ is the range of the dispersion relation, i.e.,
    \begin{align*}
        \sigma(\L)=\cup_{\balpha\in\Bz}\,\sigma(\L_\balpha)
        &=\big\{\lambda\in\R:\,\exists\,\balpha\in\R^d,\text{ such that }\mu\in\sigma(\L_\balpha)\big\}\\
        &=\big\{\lambda\in\R:\,\exists\,\balpha\in\R^d\text{ and }j\in\N, \text{ such that }\lambda=\mu_j(\balpha)\big\}.
    \end{align*}
\end{enumerate}    
\end{theorem}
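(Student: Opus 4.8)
The statement collects several standard properties of the Floquet band functions, so the plan is to split it into the ``soft'' symmetry and decomposition facts (items 2--5), which follow directly from the definitions and from the Floquet--Bloch transform, and the genuinely analytic regularity fact (item 1), which requires perturbation theory for the cell operators $\L_\balpha$. Throughout I would work with the conjugated family $\widetilde{\L}_\balpha=S_\balpha^{-1}\L_\balpha S_\balpha$ on the fixed periodic space $H_0^1(A;\Omega)$, which by the construction at the end of Section~\ref{sec:def_not} is analytic of type (A) in the sense of Definition~\ref{def:ana_operator_ubdd}; in fact $\widetilde{\L}_\balpha$ depends on $\balpha$ as a quadratic polynomial, hence extends to an entire family for $\balpha\in\C^n$, and by compactness of the embedding $H^1(\Omega)\hookrightarrow L^2(\Omega)$ each $\widetilde{\L}_\balpha$ has compact resolvent. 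Since $\widetilde{\L}_\balpha$ is isospectral to $\L_\balpha$, all spectral conclusions transfer.

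For the soft items: item 2 is the definition of the dispersion relation, and item 5 together with \eqref{eq:FBT} is the content of the Floquet--Bloch decomposition $\L\cong\int_\Bz^\oplus\L_\balpha\,\d\balpha$, which I take as given. For item 3 I would observe that the $\balpha$-quasi-periodicity condition $f(\bx+2\pi\bj)=e^{\i2\pi\balpha\cdot\bj}f(\bx)$ is literally unchanged when $\balpha$ is replaced by $\balpha+\bm m$ with $\bm m\in\Z^n$, because $e^{\i2\pi\bm m\cdot\bj}=1$; hence $H^s_\balpha(\Omega)=H^s_{\balpha+\bm m}(\Omega)$ and $\L_\balpha=\L_{\balpha+\bm m}$ as operators, so the dispersion relation is $\Z^n$-periodic and it suffices to consider $\balpha\in\Bz$. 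For item 4 I would use complex conjugation: if $\L_\balpha\phi=\mu\phi$ then, because $A$ and $V$ are real, $\overline{\L_\balpha\phi}=\L_{-\balpha}\overline{\phi}$ and $\overline{\phi}$ is $(-\balpha)$-quasi-periodic, so $\mu\in\sigma(\L_{-\balpha})$; thus $\sigma(\L_{-\balpha})=\sigma(\L_\balpha)$.

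For item 1 the plan has two independent parts. (i) \emph{Global Lipschitz continuity.} Writing $a_\balpha$ for the sesquilinear form of $\widetilde{\L}_\balpha$, namely $a_\balpha(\psi,\psi)=\int_\Omega A(\nabla+\i\balpha)\psi\cdot\overline{(\nabla+\i\balpha)\psi}+V|\psi|^2$, the coercivity $\xi'A\xi\geq c_0|\xi|^2$ together with $V\in L^\infty$ gives $|a_\balpha(\psi,\psi)-a_{\balpha'}(\psi,\psi)|\leq C|\balpha-\balpha'|\big(a_\balpha(\psi,\psi)+\|\psi\|_{L^2}^2\big)$ uniformly for $\balpha,\balpha'$ in a bounded set, and the min--max principle then yields $|\mu_j(\balpha)-\mu_j(\balpha')|\leq C|\balpha-\balpha'|$ with $C$ independent of $j$ on bounded band ranges; periodicity from item 3 upgrades this to a global Lipschitz bound on $\R^n$ for each fixed $j$. (ii) \emph{Piecewise analyticity.} Near a real $\balpha_0$, applying Kato's analytic perturbation theory to the entire family $\widetilde{\L}_\balpha$ with compact resolvent, the eigenvalues and their Riesz spectral projections are analytic away from an exceptional analytic set; Rellich's theorem for self-adjoint analytic families of a single real variable (applied along real-analytic arcs) excludes branch points over the real axis, so each increasingly-ordered band function $\mu_j$ is real-analytic off a lower-dimensional set --- precisely the piecewise real-analytic claim. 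On any region where $\mu_j$ is simple one then selects the corresponding analytic eigenvector branch and normalizes it in $L^2(\Omega)$ with a continuous phase to obtain the asserted piecewise-analytic, piecewise-continuous choice of $\phi_j(\balpha,\cdot)$.

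The main obstacle is exactly the band-crossing phenomenon emphasized in the introduction. Where two or more bands meet, the naively ordered eigenvalue $\mu_j$ can lose analyticity and the eigenprojections become discontinuous, so step (ii) must pass through the finer statement that \emph{some} local relabelling of the eigenvalues is analytic (the Kato selection, equivalently Weierstrass preparation applied to the characteristic family on a finite-dimensional reducing subspace), after which one argues that re-sorting into increasing order preserves Lipschitz continuity while destroying global analyticity; in dimension $n\geq2$ the crossing set is genuinely non-empty of codimension $\geq1$, which is why ``globally Lipschitz and only piecewise analytic'' is sharp and why the eigenfunctions admit, in general, only a piecewise analytic choice (there may be nontrivial monodromy around crossings). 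All of this is classical --- it is the content of Section~5 of \cite{Kuchm2016} together with the perturbation-theoretic machinery of \cite{Kato1995,Reed1978} --- so I would cite it rather than reconstruct the full argument here.
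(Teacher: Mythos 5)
The paper does not prove this theorem: it is imported verbatim as Theorem 5.5 of \cite{Kuchm2016} (with the perturbation-theoretic ingredients deferred to \cite{Kato1995,Kato1982}), so there is no in-paper argument to compare against. Your reconstruction is the standard one and is essentially correct: items 2--5 are definitional or follow from $\Z^n$-invariance of the quasi-periodicity condition, complex conjugation, and the direct-integral decomposition, while item 1 combines the min--max/form-comparison argument for the Lipschitz bound with Kato-type analytic perturbation theory for piecewise analyticity; you also correctly isolate the only delicate point, namely that Rellich's one-parameter selection does not extend to $n\geq 2$ parameters, so only piecewise analyticity of the ordered bands survives. Since the paper treats this as a black-box citation, your sketch is an acceptable (indeed more informative) substitute, and I see no gap beyond the level of detail one would expect when deferring to the cited references.
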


periodic operators. Existing analyses typically assume analytic dependence along the spectral parameter, whereas in higher dimensions the band functions are in general only piecewise analytic (see \cite{Kato1982}) because of band crossings.  However, analytic perturbation theory shows that analyticity is recovered whenever the dependence is reduced to a single complex parameter. This is precisely the mechanism underlying SSA.

\begin{theorem}[Theorem 3.9 in Chapter 7, \cite{Kato1995}]
\label{th:spec_1d}
Let $T(\bz):\,H\rightarrow H$ be a self-adjoint analytic family of type (A) where $\bz$ lies in a neighbourhood of an interval $I_0\subset\R$. Assume that $T(\bz)$ has compact resolvent. Then there is a sequence of scalar valued functions $\mu_n(\bz)$ and vector valued functions $\phi_n(\bz,\bx)$ such that they all depend analytically on $\bz$ in the neighbourhood of $I_0$, where $\mu_n(\bz)$ are all the repeated eigenvalues of $T(\bz)$ and $\phi_n(\bz,\cdot)$ the associated eigenvectors of $T(\bz)$. Moreover, $\phi_n(\bz,\cdot)$ form a complete orthonormal family in the space $H$.
\end{theorem}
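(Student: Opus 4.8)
The plan is to reduce the infinite-dimensional analytic perturbation problem to a countable family of finite-dimensional ones by means of Riesz projections, and then to invoke the Rellich theorem on analytic families of Hermitian matrices over a real interval. Fix $z_0\in I_0$. Since $T(z_0)$ is self-adjoint with compact resolvent, its spectrum consists of isolated eigenvalues of finite multiplicity, accumulating only at infinity, and $H$ admits an orthonormal basis of eigenvectors. For any bounded cluster of eigenvalues of $T(z_0)$ enclosed by a positively oriented contour $\Gamma$ lying in the resolvent set of $T(z_0)$, set
\[
P(z):=-\frac{1}{2\pi\i}\oint_\Gamma\bigl(T(z)-\zeta I\bigr)^{-1}\,\d\zeta .
\]
Because $T(z)$ is analytic of type (A), the resolvent $(T(z)-\zeta I)^{-1}$ is jointly analytic in $(z,\zeta)$ on a neighbourhood of $\{z_0\}\times\Gamma$, so $P(z)$ is an analytic family of projections of constant finite rank $m$ for $z$ in a complex neighbourhood of $z_0$; one continues $P$ analytically along $I_0$ as long as $\Gamma$ stays in the resolvent set, which is possible locally because the spectrum is discrete and locally finite.

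Next I would introduce Kato's transformation function $U(z)$, the solution of the linear operator ODE $U'(z)=[P'(z),P(z)]\,U(z)$ with $U(z_0)=I$. It is analytic and invertible near $z_0$, satisfies $U(z)P(z_0)U(z)^{-1}=P(z)$, and is unitary for real $z$ because $[P'(z),P(z)]$ is skew-adjoint there. Hence $\widetilde{T}(z):=U(z)^{-1}T(z)U(z)$ leaves the \emph{fixed} finite-dimensional subspace $\mathcal{W}_0:=\mathrm{Ran}\,P(z_0)$ invariant, and its restriction to $\mathcal{W}_0$, written in a fixed orthonormal basis, is an $m\times m$ matrix-valued analytic family $\mathbf{M}(z)$ that is Hermitian whenever $z$ is real. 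Thus the whole spectral content of the cluster is encoded in $\mathbf{M}(z)$.

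The core step is the Rellich theorem for $\mathbf{M}(z)$: there exist scalar analytic functions $\mu_1(z),\dots,\mu_m(z)$ and analytic orthonormal eigenvectors $\psi_1(z),\dots,\psi_m(z)$ near $z_0$. The argument: the eigenvalues of $\mathbf{M}(z)$ are the zeros of the characteristic polynomial, whose coefficients are analytic in $z$, so the zeros admit Puiseux expansions about any point of $I_0$; but a nonzero fractional Puiseux exponent would force some eigenvalue to take non-real values at real points arbitrarily close to the expansion centre, contradicting $\mathbf{M}(z)^*=\mathbf{M}(z)$ for real $z$. Hence every branch is single-valued and analytic. Given an analytic eigenvalue branch $\mu_k$, its spectral projection is an analytic contour integral wherever $\mu_k$ is simple, and across points where branches coincide one transports eigenvectors by a second Kato ODE, now inside the finite-dimensional $\mathcal{W}_0$, to obtain a globally analytic orthonormal eigenbasis near $z_0$; persistent degeneracies are handled by an analyticity-preserving Gram--Schmidt. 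Pulling back through $U(z)$ produces analytic eigenvalues and analytic orthonormal eigenvectors of $T(z)$ attached to the chosen cluster.

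Finally I would globalize: cover $I_0$ by intervals on each of which the construction above applies to every relevant cluster, and observe that on overlaps the analytic branches agree by the identity theorem (and, since $I_0$ is simply connected, continuation is single-valued), yielding globally defined analytic functions $\mu_n(z)$ and $\phi_n(z,\bx)$ on a complex neighbourhood of $I_0$, enumerated by listing all branches. For each real $z$ the vectors $\{\phi_n(z,\cdot)\}$ form an orthonormal family because $U(z)$ and the finite-dimensional transports are unitary there, and they are complete because they are complete at $z_0$ and the Riesz projections $P(z)$ sum to the identity. The main obstacle is the no-branching step --- showing that Hermitian symmetry on the real axis annihilates all fractional Puiseux exponents, so that analyticity genuinely survives band crossings --- together with the bookkeeping needed to stitch the local finite-dimensional constructions into a single global enumeration without losing analyticity at the crossing points.
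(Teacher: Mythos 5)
This theorem is not proved in the paper at all: it is quoted verbatim as an external result (Theorem 3.9 in Chapter VII of Kato's book), so there is no in-paper argument to compare against. Your proposal is, in essence, the classical Rellich--Kato proof that Kato himself gives: reduce to finite rank via Riesz projections of eigenvalue clusters, conjugate by the transformation function $U(z)$ solving $U'=[P',P]U$ to freeze the range of the projection, apply Rellich's finite-dimensional theorem to the resulting Hermitian-on-the-real-axis matrix family, and globalize along $I_0$. The structure and all the key ingredients (type (A) analyticity of the resolvent, constant rank of $P(z)$, unitarity of $U(z)$ for real $z$, the Puiseux/no-branching argument for eigenvalues) are correctly identified, so the proposal is essentially sound and follows the same route as the cited source.

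The one step you gloss over is the analyticity of the \emph{eigenvectors} (equivalently, of the individual eigenprojections) at a crossing point. Your Puiseux argument correctly kills fractional exponents for the eigenvalues, but for the eigenprojections the standard difficulty is different: each branch's eigenprojection is a priori meromorphic near an exceptional point, with a possible pole there, and the decisive observation in Kato's proof is that for real $z$ these are \emph{orthogonal} projections, hence of norm one, so the singularity is removable by boundedness. The phrases ``transport by a second Kato ODE'' and ``analyticity-preserving Gram--Schmidt'' do not by themselves supply this removable-singularity argument, and without it the construction of analytic orthonormal eigenvectors through a crossing is incomplete. If you add that boundedness-implies-removability step, the proof is complete and matches the reference.
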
 

Although the band functions are generally only piecewise
analytic in the full Floquet variable \(\balpha\), the
one-dimensional analytic structure described in
Theorem~\ref{th:spec_1d} survives along the spectral slices introduced in Section \ref{sec:def_not}. This observation is the key spectral mechanism behind SSA.

\subsection{Fermi surfaces}
\label{sec:bs}

The central geometric object of the present work is the Fermi surface associated with a fixed frequency $\lambda\in\sigma(\L)$. It provides the geometric foundation for the SSA framework and the subsequent analysis of radiation.

Denote by
\begin{align*}
I_j:={\rm range}(\mu_j)&:=\{\mu\in\R:\, \exists\,\balpha\in\R^d\text{ such that }\mu=\mu_j(\balpha)\}.
\end{align*}
the $j$-th ($j=1,2,\dots$) band of $\sigma(\L)$. Since each band is bounded, only finitely many bands intersect a fixed frequency $\lambda$. The basic properties of the bands are summarized in the following theorem.

\begin{theorem}[Corollary 5.8, \cite{Kuchm2016}]
    \label{th:kuch3}
Let $I_j$ be the $j$-th band of the spectrum $\sigma(\L)$, then
\begin{enumerate}
    \item Each band $I_j$ is a finite closed interval, both  endpoints tend to infinity when $j\rightarrow\infty$.
    \item The band covers the whole spectrum:
    \[
    \sigma(\L)=\cup_{j\in\N} I_j.
    \]
    \item The bands can overlap, i.e. $I_j\cap I_\ell$ can be non-empty for $j\neq \ell$.    
\end{enumerate}
\end{theorem}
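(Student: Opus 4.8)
The plan is to read off all three assertions from the properties of the band functions recorded in Theorems~\ref{th:kuch1}--\ref{th:kuch2} and from the Floquet decomposition \eqref{eq:FBT}; no new machinery is needed. For the first assertion, note that by Point~1 of Theorem~\ref{th:kuch2} each $\mu_j$ is continuous (indeed globally Lipschitz) on $\R^n$, and by Point~3 it is $\Z^n$-periodic, so $I_j=\mu_j(\overline{\Bz})$ with $\overline{\Bz}=[-1/2,1/2]^n$ compact and connected; a continuous function carries a compact connected set onto a closed bounded interval, hence $I_j=[\min_{\overline{\Bz}}\mu_j,\ \max_{\overline{\Bz}}\mu_j]$. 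It remains to see that the endpoints diverge, and it suffices to prove $m_j:=\min_{\balpha\in\overline{\Bz}}\mu_j(\balpha)\to\infty$. I would obtain this from the min--max principle: the quadratic form of $\L_\balpha$ and that of the Laplacian $(-\Delta)_\balpha$ with $\balpha$-quasi-periodic boundary conditions on $\Omega$ share the form domain $H^1_\balpha(\Omega)$, and the ellipticity bound $\xi'A(\bx)\xi\ge c_0|\xi|^2$ together with $V\in L^\infty$ gives $q_{\L_\balpha}(\phi)\ge c_0\,q_{(-\Delta)_\balpha}(\phi)-\|V\|_{L^\infty}\|\phi\|_{L^2(\Omega)}^2$ for all $\phi\in H^1_\balpha(\Omega)$. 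Min--max then yields $\mu_j(\balpha)\ge c_0\,\lambda_j(\balpha)-\|V\|_{L^\infty}$, where $\lambda_j(\balpha)$ is the $j$-th element, counted with multiplicity, of $\{|\bm{k}+\balpha|^2:\bm{k}\in\Z^n\}$; since $\balpha$ stays in the bounded cube $\overline{\Bz}$, a lattice-point count gives $\lambda_j(\balpha)\ge c\,j^{2/n}-C$ uniformly in $\balpha$, so $m_j\to\infty$ and both endpoints of $I_j$ tend to $+\infty$.

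The second assertion is a set-theoretic identity: combining \eqref{eq:FBT} with the enumeration $\sigma(\L_\balpha)=\{\mu_j(\balpha):j\in\N\}$ of Theorem~\ref{th:kuch1},
\[
\sigma(\L)=\bigcup_{\balpha\in\Bz}\sigma(\L_\balpha)=\bigcup_{\balpha\in\Bz}\bigcup_{j\in\N}\{\mu_j(\balpha)\}=\bigcup_{j\in\N}\Big(\bigcup_{\balpha\in\Bz}\{\mu_j(\balpha)\}\Big)=\bigcup_{j\in\N}I_j,
\]
the interchange of the two unions being valid for arbitrary set-indexed unions (this also recovers Point~5 of Theorem~\ref{th:kuch2}). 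For the third assertion one only needs an example in which $I_j\cap I_\ell\neq\emptyset$ for some $j\neq\ell$: I would take $A=I$, $V=0$ on $\R^n$ with $n\ge2$, so that $\mu_j(\balpha)$ is the $j$-th smallest element of $\{|\bm{k}+\balpha|^2:\bm{k}\in\Z^n\}$; as $\balpha$ ranges over $\overline{\Bz}$ the spheres $\{|\bm{k}+\balpha|=r\}$ sweep across several lattice shells, so consecutive ranges $I_j,I_{j+1}$ already overlap, and in fact $\sigma(\L)=[0,\infty)$ with no gaps in this case.

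The only step that goes beyond bookkeeping is the uniform-in-$\balpha$ divergence $m_j\to\infty$ in the first assertion, and even there the content is just a min--max comparison against an explicitly diagonalizable operator; the one point worth stating carefully is that this comparison is legitimate because the two quadratic forms share the domain $H^1_\balpha(\Omega)$ --- if a $\balpha$-independent setting is preferred, conjugating by the isomorphism $S_\balpha$ of Section~\ref{sec:def_not} transports everything onto the fixed periodic space $H^1_0(\Omega)$ before any estimate is made.
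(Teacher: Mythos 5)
Your proof is correct. Note first that the paper itself offers no proof of this statement: it is imported verbatim as Corollary~5.8 of \cite{Kuchm2016}, so there is nothing internal to compare against; what you have written is essentially the standard derivation one finds behind that citation. The three steps all check out: (i) $I_j=\mu_j(\overline{\Bz})$ is a closed bounded interval because $\mu_j$ is continuous (Theorem~\ref{th:kuch2}, Point~1), $\Bz$-periodic (Point~3), and $\overline{\Bz}$ is compact and connected; (ii) the divergence of $\min_{\overline{\Bz}}\mu_j$ follows from the form comparison $q_{\L_\balpha}(\phi)\ge c_0\|\nabla\phi\|_{L^2(\Omega)}^2-\|V\|_{L^\infty}\|\phi\|_{L^2(\Omega)}^2$ on the common form domain $H^1_\balpha(\Omega)$, min--max, and the uniform Weyl-type lower bound for the quasi-periodic Laplacian, whose eigenvalues $|\bj+\balpha|^2$, $\bj\in\Z^n$, match the paper's own computation in the free-space example of Section~7; (iii) the set-theoretic identity and the overlap example are immediate. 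Two small points worth making explicit if you write this up: the ellipticity hypothesis $\xi'A(\bx)\xi\ge c_0|\xi|^2$ is stated for real $\xi$, so for a complex gradient you should split $\nabla\phi$ into real and imaginary parts and use that $A$ is real symmetric to get $A\nabla\phi\cdot\overline{\nabla\phi}\ge c_0|\nabla\phi|^2$; and in the overlap example it is cleaner to exhibit a concrete common value, e.g.\ for $n=2$ at $\balpha=(1/2,0)$ one has $\mu_1(\balpha)=\mu_2(\balpha)=1/4$, so $1/4\in I_1\cap I_2$. Your argument also has the minor virtue of being quantitative ($\min\mu_j\gtrsim j^{2/n}$), which the cited corollary does not state.
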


From the first argument in Theorem \ref{th:kuch3}, for any fixed $\lambda\in\R$, there are at most finitely many indices $j$ such that $\lambda\in I_j$, i.e.
\[
\exists\,\balpha\in\Bz\text{ such that }\lambda=\mu_j(\balpha).
\]
This motivates the following definition of the {\bf Fermi surface}. 

\begin{definition}[Definition 5.30, \cite{Kuchm2016}]
\label{def:FS}
The level set  for a real-valued $\lambda$ is given by
\[
\bF_\lambda:=\{\balpha\in\Bz:\, \L_\balpha u=\lambda u\text{ has a non-trivial solution}\}.
\]
\end{definition}

Define the set $J=J(\lambda)$ by
\begin{equation}
\label{eq:index_J}
J=\{j\in\N:\,\lambda\in I_j\}.
\end{equation}
Since for any $\lambda\in\R$, only finite number of $j$'s such that $\lambda\in I_j$, $J$ is a finite index set. Then $\bF_\lambda$ is composed of the level sets of finite number of band functions:
\begin{equation}
\label{eq:def_surface}
\bF_\lambda=\cup_{j\in J(\lambda)}\{\balpha\in\R^d:\,\mu_j(\balpha)=\lambda\}:=\cup_{j\in J(\lambda)}\bF_\lambda^j.
\end{equation}
The following theorem, which was summarized by Kuchment from \cite{Thomas1973} and Chapter VIII.16 in \cite{Reed1980}, states an important property of the level set $\bF_\lambda$.

\begin{theorem}[Theorem 5.20, \cite{Kuchm2016}]
    \label{th:kuch4}
   For any $\lambda\in\R$, the level set $\bF_\lambda$ has measure $0$ in $\R^d$. 
\end{theorem}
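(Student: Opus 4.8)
The plan is to prove the stronger fact that the \emph{complexified Fermi variety} is a proper analytic subvariety of $\C^n$; the measure-zero statement for its real slice $\bF_\lambda$, and then the non-constancy of the band functions, follow at once. First I would complexify the quasimomentum. By the Remark after Definition~\ref{def:ana_operator_ubdd}, the analytic family $\widetilde L_\balpha=S_\balpha^{-1}\L_\balpha S_\balpha$ extends to $\bz\in\C^n$ as an operator of type~(A) on the fixed domain $H_0^1(A;\Omega)$ with polynomial dependence on $\bz$; since $\widetilde L_{\bz}-\widetilde L_0$ is a first-order differential operator, $\widetilde L_{\bz}$ stays closed with compact resolvent, and for real $\balpha$ one has $\sigma(\widetilde L_\balpha)=\sigma(\L_\balpha)$. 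Choosing $c>0$ large enough that $-c\notin\sigma(\widetilde L_0)$, I would write
\[
\widetilde L_{\bz}-\lambda=(\widetilde L_0+c)\bigl(I+K(\bz)\bigr),\qquad K(\bz):=(\widetilde L_0+c)^{-1}\bigl(\widetilde L_{\bz}-\widetilde L_0-\lambda-c\bigr).
\]
Elliptic and Sobolev-embedding estimates place $K(\bz)$ in a fixed Schatten class $\mathcal S_m$ (any integer $m>n$ works) with entire dependence on $\bz$, so the regularized Fredholm determinant
\[
g(\bz):={\det}_m\bigl(I+K(\bz)\bigr)
\]
is an entire function on $\C^n$ that vanishes exactly where $\lambda\in\sigma(\widetilde L_{\bz})$. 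By Definition~\ref{def:FS} and the similarity $\widetilde L_\balpha=S_\balpha^{-1}\L_\balpha S_\balpha$, this identifies $\bF_\lambda=\{\balpha\in\R^n:g(\balpha)=0\}$.

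The heart of the matter is to show $g\not\equiv0$, i.e.\ to exhibit a single complex quasimomentum at which $\lambda$ is not in the spectrum; this is Thomas's device \cite{Thomas1973} (see also \cite{Reed1980}). I would test the ray $\bz_t=\tfrac12\be_1+\i t\be_1$, $t\to+\infty$. In the constant-principal-part model $A\equiv I$, expanding $\widetilde L_{\bz_t}$ in the Fourier basis $\{e^{\i\bm{k}\cdot\bx}\}_{\bm{k}\in\Z^n}$ diagonalizes its second-order part with entries equal to the complex bilinear square $(\bm{k}+\bz_t)\cdot(\bm{k}+\bz_t)$, whose imaginary parts are $2t(k_1+\tfrac12)$ and hence have modulus $\ge t$ for every $\bm{k}\in\Z^n$; the second-order part minus $\lambda$ is therefore boundedly invertible with inverse of norm $\le1/t$, and a Neumann series in the bounded multiplication operator $V$ converges once $t>\|V\|_{L^\infty}$, so $\lambda\notin\sigma(\widetilde L_{\bz_t})$.

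For the genuine divergence-form operator the same ray should still work, but the estimate is markedly more delicate, and \textbf{this is the step I expect to be the main obstacle.} Writing $\nabla+\i\bz_t=(\nabla+\tfrac{\i}{2}\be_1)-t\be_1$ produces in $\widetilde L_{\bz_t}$ the dominant negative-definite zeroth-order term $-t^2\,\be_1'A(\bx)\be_1\le-c_0t^2$ together with a first-order operator carrying a factor $t$; because the second-order part no longer diagonalizes in the Fourier basis, one cannot simply read off the lower bound and must instead balance $-t^2\,\be_1'A(\bx)\be_1$ against the $t$-dependent first-order terms by a quadratic-form / numerical-range estimate on $H_0^1(A;\Omega)$, in the spirit of the treatment of general periodic elliptic operators in \cite{Kuchm2016}. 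Carried out, this again yields invertibility of $\widetilde L_{\bz_t}-\lambda$ for large $t$, hence $g\not\equiv0$.

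Finally I would harvest the conclusion. Since $g$ is entire on $\C^n$ and not identically zero, its restriction $g|_{\R^n}$ is not identically zero either (apply the one-variable identity theorem in each coordinate in turn), so $g|_{\R^n}$ is a non-trivial real-analytic function on the connected set $\R^n$, and the zero set of such a function has Lebesgue measure $0$; hence $\bF_\lambda$ has measure $0$ in $\R^n$. For the remaining assertion, if some band function $\mu_j$ were constant, say $\mu_j\equiv c$ on $\Bz$ and hence, by periodicity (Theorem~\ref{th:kuch2}), on all of $\R^n$, then $\bF_c\supseteq\bF_c^j=\{\balpha\in\R^n:\mu_j(\balpha)=c\}=\R^n$, contradicting that $\bF_c$ has measure $0$; therefore no band function can be constant.
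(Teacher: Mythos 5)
The paper does not actually prove this statement: it is quoted as Theorem 5.20 of \cite{Kuchm2016}, itself summarized from \cite{Thomas1973} and \cite{Reed1980}, so there is no in-paper argument to compare against. Your reconstruction follows the standard architecture behind that citation: complexify the quasimomentum, realize the set $\{\bz:\lambda\in\sigma(\widetilde L_{\bz})\}$ as the zero set of an entire regularized Fredholm determinant $g$, show $g\not\equiv 0$ by Thomas's complex-momentum argument, and conclude because the zero set of a nontrivial real-analytic function on $\R^n$ has Lebesgue measure zero. The harvesting steps at the end (identity theorem coordinate by coordinate, measure-zero zero sets, non-constancy of $\mu_j$ via periodicity) are correct, and the factorization $\widetilde L_{\bz}-\lambda=(\widetilde L_0+c)(I+K(\bz))$ is algebraically right.

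The genuine gap is exactly the step you flag yourself, and it is not a refinement but the entire content of the theorem. You establish $g\not\equiv 0$ only for $A\equiv I$, whereas the paper's operator is in divergence form with a merely bounded, measurable, uniformly elliptic matrix $A$. For such operators the Thomas estimate does not follow from the ``quadratic-form / numerical-range'' balancing you sketch: after substituting $\nabla+\i\bz_t$ with $\bz_t=\tfrac12\be_1+\i t\be_1$, the real part of the sesquilinear form contains the indefinite combination of the second-order term and $-t^2\int_\Omega \be_1'A(\bx)\be_1|u|^2$, which degenerates on high Fourier modes near the null cone of the complexified symbol, while the first-order cross terms are of size $t\|u\|\,\|\nabla u\|$ and cannot be absorbed by $-c_0t^2\|u\|^2$ alone. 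Since ``$\bF_\lambda$ has measure zero for every $\lambda$'' is equivalent to the absence of eigenvalues (flat bands) of $\L$, this is precisely the point where the known proofs require additional hypotheses (smoothness or structure of $A$, or low dimension); in the stated $L^\infty$ generality the step you defer is the recognized hard core of the problem, not a routine extension of the Schrödinger computation. A secondary, fixable issue: with $A\in L^\infty$ only, $(\widetilde L_0+c)^{-1}$ need not map $L^2(\Omega)$ into $H^2(\Omega)$, so placing $K(\bz)$ in a fixed Schatten class $\mathcal S_m$ requires an argument (e.g.\ Meyers-type higher integrability) rather than the quoted ``elliptic and Sobolev-embedding estimates.'' To make the proof rigorous you must either restrict to the hypotheses under which Kuchment's Theorem 5.20 genuinely applies or supply the deferred estimate.
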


According to the Remark of Theorem 3.11 in Section 3.4.3 in the book \cite{Evans2015}, we get the following result directly.

\begin{corollary}
\label{th:cor:kuch1}
For any $\lambda\in\R$, the level set $\bF_\lambda$ is measurable as a $(d-1)$-dimensional hypersurface.
\end{corollary}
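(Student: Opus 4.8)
The plan is to reduce the statement to a single band function and then combine the coarea formula with the piecewise analytic structure. By item~1 of Theorem~\ref{th:kuch3} the index set $J(\lambda)$ is finite, so in view of the decomposition \eqref{eq:def_surface} it suffices to show that each individual level set $\bF_\lambda^j=\{\balpha\in\R^n:\mu_j(\balpha)=\lambda\}$ is $\mathcal{H}^{n-1}$-measurable with locally finite $\mathcal{H}^{n-1}$-measure; the claim for $\bF_\lambda$ then follows by taking the finite union over $j\in J(\lambda)$.

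Fix $j$. By item~1 of Theorem~\ref{th:kuch2} the band function $\mu_j:\R^n\to\R$ is globally Lipschitz, hence differentiable almost everywhere with bounded gradient by Rademacher's theorem, and the coarea formula of \cite{Evans2015} applies to it. Proposition~2 in Section~3.4.4 of \cite{Evans2015} then gives, for every bounded open set $U\subset\R^n$,
\[
\int_{\R}\mathcal{H}^{n-1}\!\left(\bF_t^j\cap U\right)\d t=\int_U|\nabla\mu_j(\balpha)|\,\d\balpha<\infty,
\]
so that $\bF_t^j$ is an $\mathcal{H}^{n-1}$-measurable hypersurface with locally finite $\mathcal{H}^{n-1}$-measure for Lebesgue-almost every $t\in\R$.

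It remains to upgrade this conclusion from almost every $t$ to the prescribed value $t=\lambda$, and here I would use the piecewise analyticity. Write $\overline{\Bz}$ as the closure of a (locally finite) union of disjoint open connected pieces $S_k$ on each of which $\mu_j$ is real analytic, as asserted in item~1 of Theorem~\ref{th:kuch2}. On no such piece can $\mu_j$ be constant: a constant value $\lambda_0$ on some $S_k$ would force $\bF_{\lambda_0}$ to contain the set $S_k$ of positive Lebesgue measure, contradicting Theorem~\ref{th:kuch4}. Thus $\mu_j|_{S_k}-\lambda$ is a non-constant real analytic function on a connected open set, so its zero set $\bF_\lambda^j\cap S_k$ is a proper real analytic subvariety of dimension at most $n-1$, hence $\mathcal{H}^{n-1}$-measurable and of locally finite $\mathcal{H}^{n-1}$-measure. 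The residual set $\overline{\Bz}\setminus\bigcup_k S_k$ is closed of zero Lebesgue measure and, being the gluing boundary between analytic pieces, is contained in a locally finite union of lower-dimensional analytic varieties, so it too is $\mathcal{H}^{n-1}$-measurable with locally finite measure. Adding these countably many $\mathcal{H}^{n-1}$-measurable pieces, and using the $\Bz$-periodicity of $\mu_j$ (item~3 of Theorem~\ref{th:kuch2}) to pass from one cell to all of $\R^n$, shows that $\bF_\lambda^j$ is $\mathcal{H}^{n-1}$-measurable with locally finite $\mathcal{H}^{n-1}$-measure for every $\lambda\in\R$; the proof is completed by the union over $j\in J(\lambda)$.

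The step I expect to be the main obstacle is the passage from ``almost every level'' to ``every level'': the coarea formula alone controls $\bF_t^j$ only for $\mathcal{L}^1$-a.e. $t$, so one genuinely needs the extra input of piecewise real analyticity together with the measure-zero property of Theorem~\ref{th:kuch4}. A secondary technical point is verifying that the gluing set between analytic pieces does not carry infinite $\mathcal{H}^{n-1}$-measure; this is handled by the locally finite analytic stratification underlying Theorem~\ref{th:kuch2} rather than by the coarea formula itself.
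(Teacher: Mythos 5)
Your proof is correct, and it is substantially more complete than what the paper provides: the paper offers no proof at all, merely asserting that the corollary follows ``directly'' from Proposition 2 in Section 3.4.4 of \cite{Evans2015} (the coarea formula for Lipschitz functions). You correctly identify the gap in that one-line justification --- the coarea formula controls $\mathcal{H}^{n-1}\left(\bF_t^j\right)$ only for Lebesgue-a.e.\ $t$, not for the prescribed level $\lambda$ --- and you close it with the piecewise real analyticity of the band functions, invoking Theorem \ref{th:kuch4} exactly where it is needed, namely to exclude constant analytic pieces so that each restriction $\mu_j|_{S_k}-\lambda$ is a non-constant analytic function whose zero set is an analytic variety of dimension at most $n-1$ with locally finite $\mathcal{H}^{n-1}$-measure. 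Two minor remarks. First, bare $\mathcal{H}^{n-1}$-measurability of $\bF_\lambda$ is immediate without any of this machinery: each $\bF_\lambda^j=\mu_j^{-1}(\lambda)$ is closed by continuity of $\mu_j$, hence Borel, hence measurable for the Borel regular outer measure $\mathcal{H}^{n-1}$; the genuine content of the corollary is the locally finite measure (that $\bF_\lambda$ is an honest $(n-1)$-dimensional hypersurface rather than something larger), and that is precisely the part your analyticity argument supplies and the paper's citation does not. Second, your treatment of the gluing set between analytic pieces relies on the assumption, nowhere stated explicitly in the paper, that the boundaries of the analytic pieces form a locally finite union of analytic varieties of dimension at most $n-1$; this is implicit in what ``piecewise analytic'' means for band functions (the crossing sets are themselves analytic varieties), but it is right to flag it as the remaining technical point rather than to pass over it silently.
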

This regularity is required for the surface integration appearing in Section \ref{sec:lap}.

\subsection{Regular Fermi geometry}
\label{sec:regular}

We now introduce the geometric assumptions underlying the SSA framework. These assumptions provide the regularity needed for the local parameterization of the Fermi surface, the construction of the complexified spectral geometry, and the subsequent asymptotic analysis.

Let $\lambda \in \sigma(\L)$ be fixed. For each
$j \in J(\lambda)$, define the Fermi sheet
\[
\bF_\lambda^j
=
\{
\balpha \in \Bz :
\mu_j(\balpha)=\lambda
\}.
\]

\begin{assumption}[Regular energy]
\label{asp1}
For every $j \in J(\lambda)$ and  $\balpha \in \bF_\lambda^j$, one has $\nabla\mu_j(\balpha)\neq 0$.
\end{assumption}

\begin{assumption}[Analytic branches]
\label{asp2}
For every $j \in J(\lambda)$, there exists a finite covering
of a neighborhood of $\bF_\lambda^j$ by relatively open sets
$\bS_{j,\ell}$ such that:

\begin{enumerate}
    \item $\mu_j$ is real analytic on each
    $\bS_{j,\ell}$;

    \item $\mu_j$ admits a holomorphic extension to a
    complex neighborhood of
    $\overline{\bS_{j,\ell}}$;

    \item the first and second partial derivatives of the
    holomorphic extension, restricted to
    $\bS_{j,\ell}$, extend continuously to
    $\overline{\bS_{j,\ell}}$.
\end{enumerate}
\end{assumption}

\begin{assumption}[Strictly curved Fermi geometry]
\label{asp3}
For every $j \in J(\lambda)$ and every $\balpha \in \bF_\lambda^j$, the second fundamental form $\mathrm{II}_{\balpha}$ is definite on $T_{\balpha}\bF_\lambda^j$.
\end{assumption}

Under the above assumptions, each Fermi sheet is a finite
union of real-analytic hypersurface patches. Gradients and
Hessians are interpreted patchwise and extend continuously
to the closures of the analytic branches. In particular,
all boundary values are understood as limits taken from
within the corresponding branch. Moreover, these
derivatives coincide with the restrictions of the
corresponding derivatives of the holomorphic extension, so
all geometric quantities  admit canonical complexifications through the holomorphic extensions introduced above.

\begin{definition}[Geometric quantities]
\label{def:regular}
For $\balpha \in \bF_\lambda^j$, define
\[
v(\balpha)
:=
|\nabla\mu_j(\balpha)|
\quad\text{
and the unit normal vector}\quad 
\nu(\balpha)
=
\frac{\nabla\mu_j(\balpha)}
{v(\balpha)}.
\]
For a fixed direction $\n$, the quantity
\[
v_\n=\nabla\mu_j\cdot \n
\]
is called the normal velocity.
For tangent vectors ${\bm\xi} \in T_{\balpha}\bF_\lambda^j$ (i.e. $\nabla\mu_j(\balpha)\cdot{\bm\xi}=0$), the second fundamental form is defined by
\[
\mathrm{II}_{\balpha}({\bm\xi},{\bm\xi})
=
\frac{
D^2\mu_j(\balpha)[{\bm\xi},{\bm\xi}]
}{
v(\balpha)
}.
\]
The principal curvatures $\kappa_1(\balpha),\dots,\kappa_{d-1}(\balpha)$
are the eigenvalues of $\mathrm{II}_{\balpha}$. The Gaussian curvature is defined by
\[
K(\balpha)
=
\prod_{m=1}^{d-1}
\kappa_m(\balpha).
\]
\end{definition}

Unlike the full group velocity, only its component \(v_{\n}\) along the observation direction \(\n\) governs the directional analytic structure introduced by SSA. Its zero set on the Fermi surface will play a distinguished
role in the subsequent analysis.

\begin{lemma}[Uniform geometric bounds]
\label{lem:uniform-geometry}
Under the preceding assumptions, there exist positive constants $v_{\min},v_{\max},\kappa_{\min},\kappa_{\max}$ such that
\[
0<v_{\min}
\le v(\balpha)
\le v_{\max},\quad
0<\kappa_{\min}
\le
|\kappa_m(\balpha)|
\le
\kappa_{\max},\quad\forall\,
\balpha\in\bF_\lambda,\,m=1,\dots,d-1.
\]
In particular, the second fundamental form is uniformly
definite on every analytic branch.
\end{lemma}

\begin{proof}
The result follows from the compactness of the Fermi
surface, the finiteness of the analytic covering, and the
continuity of the corresponding geometric quantities.
\end{proof}

\begin{remark}
Assumption~\ref{asp3} is not merely a technical condition for the
application of stationary-phase arguments. It plays two fundamental
roles throughout the paper.

First, as will be shown in Section~\ref{sec:dift},
\[
D^2\mu(\balpha)[\n,\n]
=
v(\balpha)\,
\mathrm{II}_{\balpha}(\n,\n).
\]
Hence the definiteness of the second fundamental form guarantees a
non-degenerate quadratic behavior in the normal direction, leading to
the stable square-root branching of the complexified Fermi surface
near grazing points.

 Second, the leading coefficients in the
non-grazing and grazing asymptotics contain the factor
\[
|\det\mathrm{II}(\balpha^*)|^{-1/2},
\]
so the same geometric condition also guarantees the nondegeneracy of
the asymptotic expansions. The analysis of degenerate Fermi
geometries is left for future work.
\end{remark}

\begin{lemma}
\label{lm:finite_inter}
Under the above assumptions, every affine line intersects
each Fermi sheet $\bF_\lambda^j$ in at most finitely many
points.
\end{lemma}

\begin{definition}[Regular frequency]
\label{def:regular_frequency}
A frequency
$
\lambda\in\sigma(\mathcal L)
$
is called regular if Assumptions
\ref{asp1}--\ref{asp3}
are satisfied.
\end{definition}

\section{Properties of complexified Fermi surface}
\label{sec:complex_surface}

In this section, we study the local geometry of the
complexified Fermi surface
\[
\mu_j(\balpha)=\lambda+\i\epsilon
\]
along a fixed observation direction $\n$.

Let $\bS$ be an analytic branch of a band function
$\mu_j$, and write $\mu$ for the restriction of $\mu_j$ to
$\bS$. By Assumption~\ref{asp2}, $\mu$
admits a holomorphic extension to a complex neighborhood of
$\overline{\bS}$, and its first and second derivatives
extend continuously to $\overline{\bS}$.

For a fixed regular frequency $\lambda$, define the
corresponding local Fermi sheet by
\[
\bL
=
\{\balpha \in \overline{\bS} : \mu(\balpha)=\lambda\}.
\]
Choose an orthonormal basis
$\{\t_1,\dots,\t_{d-1},\n\}$ and write
\[
\balpha
=
\bgamma\t + s\n,
\qquad
\bgamma \in \R^{d-1},
\quad
s \in \R.
\]
Accordingly, we regard $\mu$ as a function of
$(\bgamma,s)$ through
\[
\mu(\bgamma,s)
=
\mu(\bgamma\t+s\n).
\]
Classical complex Bloch varieties are complexified with respect to the full quasi-momentum, whereas SSA complexifies only the slice variable associated with a fixed observation direction. It provides the geometric foundation for the outgoing contour deformation developed in the next section.

Let
\[
\balpha_0
=
\bgamma^{(0)}\t+s_0\n
\in \bL.
\]
The local structure of the complexified level set
\[
\mu(\bgamma,s)=\lambda+\i\epsilon
\]
depends on whether the directional derivative
$
\partial_{\n}\mu(\balpha_0)
=
\nabla\mu(\balpha_0)\cdot\n
$ equals  $0$. Accordingly, two geometrically distinct cases arise:
\begin{itemize}
    \item \textbf{Non-grazing points:}
    $\nabla\mu(\balpha_0)\cdot\n \neq 0$.
    In this case the complexified level set is locally
    represented by a single holomorphic branch
    $s=s(\bgamma,\epsilon)$.

    \item \textbf{Grazing points:}
    $\nabla\mu(\balpha_0)\cdot\n = 0$.
    In this case the complexified level set splits into two
    square-root branches
    $s_\pm(\bgamma,\epsilon)$.
\end{itemize}

These two local models constitute the basic local geometries underlying the SSA framework. They will later be assembled into a global parameterization of the outgoing complex Fermi surface.


\subsection{Non-grazing points}
\label{sec:cift}

At non-grazing points, the nonvanishing normal velocity
\[
v_n(\balpha_0)
=
\nabla\mu(\balpha_0)\cdot\n
\neq 0
\]
allows one to resolve the complexified Fermi surface
locally by the implicit function theorem.


\begin{theorem}[Local parameterization near non-grazing points]
\label{th:non-grazing}
Let
\[
\balpha_0
=
\bgamma^{(0)} \t + s_0 \n
\in \bL
\quad\text{
satisfy
}\quad
\nabla \mu(\balpha_0)\cdot \n \neq 0.
\]
Then there exist neighborhoods of $\bgamma^{(0)}$ and $0$
and a unique local branch
$
s=s(\bgamma,\epsilon)
$
such that
\[
s(\bgamma^{(0)},0)=s_0
\quad\text{
and
}\quad
\mu(\bgamma \t+s(\bgamma,\epsilon)\n)
=
\lambda+\i\epsilon.
\]
Furthermore,
\begin{equation}
\label{eq:ift_diff}
\nabla_{\bgamma}s(\bgamma^{(0)},0)
=
-
\frac{\nabla_{\bgamma}\mu(\balpha_0)}
{\nabla\mu(\balpha_0)\cdot \n},
\qquad
\partial_\epsilon s(\bgamma^{(0)},0)
=
\frac{\i}
{\nabla\mu(\balpha_0)\cdot \n}.
\end{equation}
In particular, for sufficiently small $\epsilon>0$,
\begin{equation}
\label{eq:ift_ext}
s(\bgamma^{(0)},\epsilon)
=
s_0
+
\frac{\i\epsilon}
{\nabla \mu(\balpha_0)\cdot \n}
+
O(\epsilon^2).
\end{equation}
Therefore, $\operatorname{Im}\bigl(s(\bgamma^{(0)},\epsilon)-s_0
\bigr)$ has the same sign as $\nabla \mu(\balpha_0)\cdot \n$.

\end{theorem}

\begin{proof}
Define
\[
G(\bgamma,\epsilon,s)
=
\mu(\bgamma \t + s \n)
-
\lambda
-
\i\epsilon.
\]
Then
\[
G(\bgamma^{(0)},0,s_0)=0
\quad\text{and}\quad
\partial_s G(\bgamma^{(0)},0,s_0)
=
\nabla \mu(\balpha_0)\cdot \n
\neq 0.
\]

By Assumption~\ref{asp2}, the function $G$ extends to a
$C^2$ function in a neighborhood of
$(\bgamma^{(0)},0,s_0)$. The implicit function theorem
therefore yields a unique local branch
$s=s(\bgamma,\epsilon)$ satisfying the stated identity. 
Differentiating with respect to $\bgamma$ and $\epsilon$ at
$(\bgamma^{(0)},0)$ similarly gives \eqref{eq:ift_diff},
which implies the expansion \eqref{eq:ift_ext}.
\end{proof}

Thus, away from the grazing set, the complexified Fermi surface therefore admits a single analytic branch whose imaginary displacement is determined by the normal velocity. The sign of the normal velocity uniquely determines the outgoing complex branch.

\subsection{Grazing points}
\label{sec:dift}

\subsubsection{The grazing set}

We now consider points $\balpha_0=\bgamma^{(0)} \t + s_0 \n\in \bL$
such that $v_\n(\balpha_0)=\nabla \mu(\balpha_0)\cdot \n = 0$. 
At such points, the first-order normal derivative vanishes,
and the complexified Fermi surface develops a square-root
branch structure.

Define the grazing set
\[
G_\n=
\{
\balpha \in \bL :v_\n(\balpha)=\nabla\mu(\balpha)\cdot n = 0
\}.
\]
For every $\balpha_0\in G_\n$, according to Assumption \ref{asp1}, we choose
\[
\t_1
=
\frac{\nabla\mu(\balpha_0)}
{|\nabla\mu(\balpha_0)|},
\]
and complete it to an orthonormal basis
\[
\{\t_1,\dots,\t_{d-1},\n\}.
\]
We write
\[
\bgamma=(\bgamma_1,\bgamma'),
\qquad
\bgamma'
\in \R^{d-2}.
\]

\begin{theorem}[Local parameterization of the grazing set]
\label{th:grazing-set}
Let $\balpha_0\in G_\n$. Then there exist a neighborhood
$W_0$ of ${\bgamma^{(0)}}'$ and functions
$
\phi(\bgamma')$
 and $
\psi(\bgamma')
$
such that
\[
(\psi(\bgamma'),\bgamma')\t
+
\phi(\bgamma')\n
\]
parameterizes a neighborhood of $\balpha_0$ in $G_\n$.
\end{theorem}

\begin{proof}
The grazing set is characterized by
\[
\mu(\bgamma\t+s\n)=\lambda,
\qquad
\partial_\n\mu(\bgamma\t+s\n)=0.
\]
Define
\[
F(\gamma_1,s,\bgamma')
=
\begin{pmatrix}
\mu(\bgamma\t+s\n)-\lambda\\
\partial_n\mu(\bgamma\t+s\n)
\end{pmatrix}.
\]
At $\balpha_0=\left(\bgamma_1^{(0)},{\bgamma^{(0)}}',s_0
\right)$, one has
\[
F(\gamma_1^{(0)},s_0,{\bgamma^{(0)}}')=0.
\]
From the definition of $\t_1$,
\[
\partial_{\bgamma_1}\mu(\balpha_0)=\t_1\cdot\nabla\mu(\balpha_0)
=
|\nabla\mu(\balpha_0)|
=
v(\balpha_0).
\]
Since $\balpha_0\in G_\n$, the direction $\n$ is tangent to
the Fermi surface at $\balpha_0$, i.e.
\[
\partial_{s}\mu(\balpha_0)
=
\nabla\mu(\balpha_0)\cdot\n
=0.
\]
Moreover,
\[
\partial_s^2\mu(\balpha_0)
=
D^2\mu(\balpha_0)[\n,\n]
=
v(\balpha_0)\,
\mathrm{II}_{\balpha_0}(\n,\n).
\]
Hence
\[
\det
D_{(\gamma_1,s)}F
(\gamma_1^{(0)},s_0,{\bgamma^{(0)}}')
=
v(\balpha_0)^2
\,
\mathrm{II}_{\balpha_0}(\n,\n).
\]

By Assumptions~\ref{asp1} and~\ref{asp3},
\[
v(\balpha_0)\neq0,
\qquad
\mathrm{II}_{\balpha_0}(\n,\n)\neq0.
\]
Therefore the Jacobian is non-degenerate, and the implicit
function theorem yields the stated local parameterization.
\end{proof}

\begin{remark}
The grazing set is precisely the locus where the complexified
Fermi surface changes from a single-sheeted to a two-sheeted
structure. It is characterized by the two equations
\[
\mu(\balpha)=\lambda,
\qquad
\nabla\mu(\balpha)\cdot \n=0.
\]
Under Assumptions~\ref{asp1} and~\ref{asp3}, Theorem~\ref{th:grazing-set}
shows that these constraints are locally independent. Consequently,
the grazing set forms a smooth codimension-one submanifold of the
Fermi sheet. Since the Fermi sheet itself is a hypersurface of the
Brillouin zone, the grazing set is a codimension-two submanifold of
the Brillouin zone.

In particular, this geometric structure is specific to dimensions
$d\ge2$. In one-dimensional periodic problems, the Fermi surface
consists of isolated points and therefore cannot support a codimension-one
grazing stratum. This explains why the grazing geometry studied in the
present paper has no genuine one-dimensional analogue.
\end{remark}

\subsubsection{Square-root branching}

For $\bgamma' \in W_0$, define
\[
\balpha(\bgamma')
=
(\psi(\bgamma'),\bgamma')\t
+
\phi(\bgamma')\n
\in G_\n.
\]
We introduce the coefficients
\[
a(\bgamma')
:=
D^2\mu(\balpha(\bgamma'))[\n,\n],
\quad
b(\bgamma')
:=
\t_1\cdot
\nabla\mu(\balpha(\bgamma')).
\]

At the base point, from Assumption \ref{asp3},
\[
a({\bgamma^{(0)}}')
=
D^2\mu(\balpha_0)[\n,\n]
=
v(\balpha_0)\,
\mathrm{II}_{\balpha_0}(\n,\n)
\neq0.
\]
Also from Assumption \ref{asp1},
\[
b({\bgamma^{(0)}}')
=
|\nabla\mu(\balpha_0)|
=
v(\balpha_0)
\neq0.
\] Therefore, after possibly
shrinking $W_0$, they remain continuous and
nonvanishing on $W_0$. 

The coefficients
$
a(\bgamma')
$ and $b(\bgamma')$
represent, respectively, the normal quadratic curvature and
the transverse velocity of the Fermi surface along the
grazing set.

\begin{theorem}[Square-root branching near grazing points]
\label{th:grazing-branch}
Let $\balpha_0\in G_\n$. Then, after possibly shrinking the
neighborhoods, the equation
\[
\mu(\bgamma\t+s\n)
=
\lambda+\i\epsilon
\]
splits into two local branches
$
s_\pm(\bgamma,\epsilon)
$
such that
\begin{equation}
\label{eq:square_branch}
s_\pm(\bgamma,\epsilon)
=
\phi(\bgamma')
\pm
\sqrt{
-\frac{2b(\bgamma')}{a(\bgamma')}
\bigl(
\bgamma_1-\psi(\bgamma')
\bigr)
+
\frac{2\i\epsilon}{a(\bgamma')}
}
+
O\!\left(
\bigl(
|\bgamma_1-\psi(\bgamma')|
+
|\epsilon|
\bigr)^{3/2}
\right),
\end{equation}
where the square root is taken with positive imaginary
part.
\end{theorem}

\begin{proof}
Expanding $\mu$ near the grazing set gives
\[
\mu(\bgamma\t+s\n)-\lambda
=
b(\bgamma')
\bigl(
\gamma_1-\psi(\bgamma')
\bigr)
+
\frac12
a(\bgamma')
\bigl(
s-\phi(\bgamma')
\bigr)^2
+
R,
\]
where
\[
R
=
O\!\left(
|\gamma_1-\psi(\bgamma')|^2
+
|\gamma_1-\psi(\bgamma')|
|s-\phi(\bgamma')|
+
|s-\phi(\bgamma')|^3
\right).
\]

Neglecting the higher-order remainder, the leading-order balance
\[
b(\bgamma')
(\gamma_1-\psi(\bgamma'))
+
\frac12
a(\bgamma')
(s-\phi(\bgamma'))^2
=
\i\epsilon
\]
shows that
\[
s-\phi(\bgamma')
=
O\!\left(
\bigl(
|\gamma_1-\psi(\bgamma')|
+
|\epsilon|
\bigr)^{1/2}
\right).
\]

Consequently,
\[
|\gamma_1-\psi(\bgamma')|
\,|s-\phi(\bgamma')|
=
O\!\left(
\bigl(
|\gamma_1-\psi(\bgamma')|
+
|\epsilon|
\bigr)^{3/2}
\right),
\]
and therefore the mixed term is of strictly higher order than the
leading terms. The same estimate also shows that the cubic remainder
is
\[
O\!\left(
\bigl(
|\gamma_1-\psi(\bgamma')|
+
|\epsilon|
\bigr)^{3/2}
\right).
\]

Since $a(\bgamma')\neq0$, the quadratic equation determined by the
leading terms admits two branches. Solving it perturbatively gives
\eqref{eq:square_branch}.
\end{proof}

\begin{remark}
The two branches $s_+(\bgamma,0)$ and $s_-(\bgamma,0)$ coincide
if and only if
\[
\bgamma_1
=
\psi(\bgamma').
\]
In this case,
\[
s_\pm(\bgamma,0)
=
\phi(\bgamma'),
\]
and the corresponding point belongs to $G_\n$.
\end{remark}


\subsection{Global outgoing geometry}
\label{sec:par_F}

The local constructions developed above can now be assembled into a finite global description of the outgoing geometry. For a fixed direction $\n\in\mathbb{S}^{d-1}$, the corresponding outgoing real sheet is defined by
\[
\bL^+
:=
\left\{
\balpha\in\bS:
\nabla\mu(\balpha)\cdot\n>0
\right\},
\]
while the outgoing complex sheet is defined by
\[
\bL_c^+
:=
\left\{
\balpha\in\bS^\C:
\operatorname{Im}s(\balpha)>0
\right\},
\]
where $\bS^\C$ denotes the local complexification of $\bS$.

 Around each grazing point, Theorem~\ref{th:grazing-branch} provides a square-root parametrization whose real and complex sides describe respectively the outgoing Fermi sheet and its complex extension.

Since \(G_\n\) is compact, finitely many such grazing charts
 cover \(G_\n\). We denote the corresponding real
and complex pieces by
\[
\Gamma_j^+,
\qquad
\Gamma_{j,c}^+,
\qquad
j=1,\dots,M.
\]
Removing these neighborhoods leaves a
compact subset of \(\bL^+\)  uniformly separated
from \(G_n\). By Theorem~\ref{th:non-grazing}, this remaining part is covered by finitely many regular charts
\[
\Gamma_j^+,
\qquad
j=M+1,\dots,N.
\]

Combining the grazing and regular charts gives a finite
global description of the outgoing real and complex sheets.
This is summarized in the following theorem.

\begin{theorem}[Finite parametrization of the outgoing geometry]
\label{th:finite-param-outgoing}

Let \(\lambda\) be regular and fix an observation direction
\(\n\in S^{d-1}\). Then there exist finitely many local coordinate systems
\[
\left\{\t_1^{(j)},\dots,\t_{d-1}^{(j)},\n^{(j)}\right\},
\qquad
j=1,\dots,N,
\]
parameter domains \(V_j^+\subset\R^{d-1}\), and local
branches \(s_j(\bgamma,\epsilon)\) such that the outgoing
sheet admits the representation
\[
\bL^+
=
\bigcup_{j=1}^{N}\Gamma_j^+,
\quad\text{ 
where }
\Gamma_j^+
=
\Bigl\{
\bgamma\t^{(j)}
+
s_j(\bgamma,0)\n^{(j)}
:
\bgamma\in V_j^+
\Bigr\}.
\]

Moreover, there exists an integer \(M\le N\) such that the
charts \(j=1,\dots,M\) are grazing charts. For each grazing
chart there is an additional parameter domain
\(V_j^-\subset\R^{d-1}\) corresponding to the complex side
of the square-root branch. The outgoing complex extension
is therefore given by
\[
\bL_c^+
=
\bigcup_{j=1}^{M}
\Gamma_{j,c}^+,
\quad\text{ 
where }
\Gamma_{j,c}^+
=
\Bigl\{
\bgamma\t^{(j)}
+
s_j(\bgamma,0)\n^{(j)}
:
\bgamma\in V_j^-
\Bigr\}.
\]

\end{theorem}

For every sufficiently small \(\epsilon>0\), the associated
complexified charts
\[
\Sigma_j^+(\epsilon)
=
\Bigl\{
\bgamma\t^{(j)}
+
s_j(\bgamma,\epsilon)\n^{(j)}
:
\bgamma\in V_j^+
\Bigr\},\quad j=1,2,\dots,N,
\]
and
\[
\Sigma_{j,c}^+(\epsilon)
=
\Bigl\{
\bgamma\t^{(j)}
+
s_j(\bgamma,\epsilon)\n^{(j)}
:
\bgamma\in V_j^-
\Bigr\},\quad j=1,2,\dots,M,
\]
yield the finite representation
\[
\Sigma^+(\epsilon)
=
\bigcup_{j=1}^{N}
\Sigma_j^+(\epsilon)
\cup
\bigcup_{j=1}^{M}
\Sigma_{j,c}^+(\epsilon).
\]

Theorem~\ref{th:finite-param-outgoing} is stated for a single analytic branch of a band function. Applying it to every branch associated with the regular frequency $\lambda$ yields finite coverings of the corresponding outgoing real and complex Fermi surfaces, i.e., $\bF^+_j$ and $\bF^+_{j,c}$. These local chart families will be used throughout the subsequent contour deformation and residue analysis.

\section{Admissible deformation of the fiber resolvent}
\label{sec:deform}

The purpose of this section is to construct an admissible complex
deformation for the Floquet--Bloch fiber resolvent at a regular
frequency $\lambda$. The local geometric construction developed in Section~\ref{sec:complex_surface} applies to individual analytic branches. This section constructs an admissible complex deformation of the Floquet--Bloch fiber resolvent. The construction naturally separates into two
independent parts according to the spectral structure of the fiber
operator.

The first part concerns the resonant component, namely the finitely
many analytic band functions intersecting the energy level
$\lambda$. The second part concerns the remaining infinitely many
non-resonant bands.Finally, the deformation obtained for the resonant component and the
uniform invertibility of the non-resonant component are combined to
yield an admissible deformation for the full Floquet--Bloch fiber
operator.

\subsection{Resonant component}
\label{sec:complex_Fermi}

We begin with the resonant component of the spectrum. The deformation problem can
be reduced to a branchwise analysis. The objective of this subsection
is first to construct a local admissible deformation for each
resonant branch, and then to combine these local constructions into a
uniform deformation valid for all resonant branches. The key step is to understand how the complexified level
set intersects directional complex rays
\[
\left\{\balpha+\i t\boldsymbol{\omega}:\,
,t\geq0\right\},\quad\bomega\in\mathbb S^{d-1}.
\]
The resulting crossing analysis provides the geometric basis for the
contour deformation constructed below.

Throughout this subsection, we fix a single analytic branch of a band
function and work with the corresponding Fermi sheet.  For simplicity, the band index \(j\)
is omitted whenever no confusion arises. 

\subsubsection{Quadratic approximation of the crossing equation}

The construction of the admissible deformation begins with a local
analysis of the intersection equation
\begin{equation}
\label{eq:int_cross}
\mu(\balpha+s\nu(\balpha)+\i t\boldsymbol{\omega})
=\lambda,
\end{equation}
where $\nu(\balpha)$ is the unit normal vector of the Fermi sheet and  $\boldsymbol{\omega}\in\mathbb S^{d-1}$ is fixed. 
To analyze the local intersection geometry, we consider the Taylor expansion of \eqref{eq:int_cross} near the Fermi surface.
The derivation of the expansion is straightforward but lengthy and is therefore deferred to Corollary \ref{cor:directional_expansion}. 
The expansion takes the form of the local equation
\begin{equation}
\label{eq:quad_cross}
A(\balpha,\bomega)t^2+B(\balpha)(s+\i \bomega_1 t)+C(\balpha) (s+\i \bomega_1 t)^2+(s+\i \bomega_1 t)t D(\balpha,\bomega)+o(s^2+t^2)=0
\end{equation}
where
\[
A(\balpha,\bomega)
=
-\frac12
\sum_{k=2}^d
\kappa_k(\balpha)\bomega_k^2,
\qquad
B(\balpha)
=v(\balpha)\]\[ C(\balpha)=\frac{1}{2}\frac{\partial^2\mu(\balpha)}{\partial x_1^2},\qquad D(\balpha,\bomega)=\sum_{k=2}^d \frac{\partial^2 \mu(\balpha)}{\partial x_1 x_k}\bomega_k.
\]
The coefficients appearing in the quadratic expansion represent the
normal velocity, the principal curvatures, and the mixed second-order
derivatives of the band function. According to Lemma~\ref{lem:crossing-non-degenerate}, there exist
$c_0>0$ and sufficiently small $\eta,\tau,\delta>0$ such that
\begin{equation}
\label{eq:coef1}
|A(\balpha,\bomega)-C(\balpha)\bomega_1^2|
\ge \frac{\kappa_{\min}}{2},
\qquad
|\bomega_1|\le\eta,
\end{equation}
and
\begin{equation}
\label{eq:coef2}
|B(\balpha)+2C(\balpha)s+D(\balpha,\bomega)t|
\ge \frac{v_{\min}}2,
\qquad
|s|\le\tau,\quad 0\le t\le\delta,
\end{equation}
uniformly for $\balpha\in\overline{U_{\balpha_0}}$ and
$\bomega\in\mathbb S^{d-1}$.

The preceding quadratic expansion allows us to control how directional
complex rays intersect the complexified Fermi surface near a real Fermi
sheet. The behavior depends on the normal component
\(\bomega_1=\bomega\cdot\nu(\balpha)\) of the perturbation direction.
When \(|\bomega_1|\) is small, the intersection is governed by the
nondegenerate quadratic term in \(t\); when \(|\bomega_1|\) is bounded
away from zero, the imaginary part of the intersection equation
precludes any nontrivial intersection for sufficiently small \(t\).
These two regimes together yield the local crossing property below.

To formulate the local crossing property uniformly near the Fermi
surface, let $\balpha_0\in\bF_\lambda$ and let
$U_{\balpha_0}\subset\bF_\lambda$ be a sufficiently small neighborhood
of $\balpha_0$. For $\tau>0$, define the tubular neighborhood
\[
\mathcal N_\tau(U_{\balpha_0})
=
\{
\balpha+s\nu(\balpha):
\balpha\in U_{\balpha_0},\ |s|<\tau
\}.
\]

\begin{proposition}[Local one-crossing property]
\label{prop:local_crossing}
Let $\balpha_0\in\bF_\lambda$. Then there exist a neighborhood
$U_{\balpha_0}\subset\bF_\lambda$ of $\balpha_0$ and constants
$\tau,\delta_0>0$ such that, for every
$
\bbeta\in\mathcal N_\tau(U_{\balpha_0})$
 and 
$\bomega\in\mathbb S^{d-1}$, 
the complex ray
\[
\{\bbeta+\i t\bomega:0<t<\delta_0\}
\]
intersects the local complexified Fermi surface
\[
\{{\bm \zeta}:\mu({\bm \zeta})=\lambda\}
\]
in at most one point. Moreover,
\[
\mu(\bbeta+\i\delta_0\bomega)\neq\lambda,\quad
\forall\,
\bbeta\in\mathcal N_\tau(U_{\balpha_0}),
\,
\bomega\in\mathbb S^{d-1}.
\]
\end{proposition}

\begin{proof}

For
\[
\bbeta=\balpha+s\nu(\balpha)
\in\mathcal N_\tau(U_{\balpha_0}),
\]
the intersection condition
\[
\mu(\bbeta+\i t\bomega)=\lambda
\]
is equivalent to the local equation \eqref{eq:quad_cross}.
The coefficients satisfy the uniform estimates
\eqref{eq:coef1}--\eqref{eq:coef2}.
Taking the real and imaginary parts of \eqref{eq:quad_cross}, we obtain
\begin{eqnarray}
\label{eq:loc_int_r}
A(\balpha,\bomega)t^2+B(\balpha)s
+C(\balpha)(s^2-\bomega_1^2t^2)
+D(\balpha,\bomega)st
+o(s^2+t^2)
&=&0,
\\
\label{eq:loc_int_i}
B(\balpha)\bomega_1t
+2C(\balpha)\bomega_1st
+D(\balpha,\bomega)\bomega_1t^2
+o(s^2+t^2)
&=&0.
\end{eqnarray}
We distinguish two cases according to the size of the normal
component $\bomega_1$.

\medskip
\noindent
{\it Case (i): $|\bomega_1|\le\eta$.}
By \eqref{eq:coef1}, the real equation
\eqref{eq:loc_int_r} can be written as
\[
t^2
=
-\frac{
B(\balpha)+C(\balpha)s+D(\balpha,\bomega)t+o(s)
}{
A(\balpha,\bomega)-C(\balpha)\bomega_1^2+o(1)
}\,s.
\]
The uniform nondegeneracy of the denominator, together with
$B(\balpha)\ge v_{\min}>0$, implies that every solution satisfies
\[
c|s|\le t^2\le C|s|
\]
for some constants $0<c<C$ independent of
$\balpha,\bomega,s$, and $t$. Hence
\[
t\sim\sqrt{|s|}.
\]
Thus, in the nearly tangential regime, all possible intersections
are confined to the parabolic scaling region $t^2\asymp |s|$.
In particular, once a sufficiently small deformation height
$\delta_0>0$ is fixed, the tubular radius $\tau>0$ can be chosen
sufficiently small so that every such intersection satisfies
\[
0<t<\delta_0.
\]

\medskip
\noindent
{\it Case (ii): $|\bomega_1|\ge\eta$.}
In this regime, \eqref{eq:loc_int_i} yields
\[
t
=
-\frac{o(t^2)}
{B(\balpha)+2C(\balpha)s+D(\balpha,\bomega)t}
\frac{1}{\bomega_1}.
\]
By \eqref{eq:coef2}, the denominator is uniformly bounded away
from zero. Since $|\bomega_1|\ge\eta$, we obtain
\[
|t|\le C\,o(t^2).
\]
For sufficiently small $t$, this is impossible unless $t=0$.
Hence no nontrivial intersection occurs in this regime.

Combining the two cases, we conclude that, after choosing
$\delta_0>0$ sufficiently small and then shrinking $\tau>0$,
every possible nontrivial intersection is confined to
\[
0<t<\delta_0.
\]
Consequently,
\[
\mu(\bbeta+\i\delta_0\bomega)\neq\lambda,\quad\forall\,
\bbeta\in\mathcal N_\tau(U_{\balpha_0}),
\,
\bomega\in\mathbb S^{d-1}.
\]

Therefore, each complex ray
\[
\{\bbeta+\i t\bomega:0<t<\delta_0\}
\]
intersects the local complexified Fermi surface in at most one point.
\end{proof}

\subsubsection{Global admissible deformation}
\label{sec:deformation}

Proposition~\ref{prop:local_crossing} establishes the local
one-crossing property in a neighborhood of every point of the
Fermi surface. We now globalize this construction and use it to
obtain a uniform admissible deformation.

Define the tubular neighborhood
\[
\mathcal N_\tau(\bF_\lambda)
=
\Bigl\{
\balpha+s\nu(\balpha):
\balpha\in\bF_\lambda,\ |s|<\tau
\Bigr\}.
\]
By the compactness of \(\bF_\lambda\), finitely many local
neighborhoods \(U_{\balpha_k}\) suffice to cover
\(\bF_\lambda\). Applying
Proposition~\ref{prop:local_crossing} to these neighborhoods
and taking the minimum of the corresponding local parameters,
we obtain uniform constants $\tau>0$ and $\delta_{\rm int}>0$
for which the local one-crossing property holds throughout
\(\mathcal N_\tau(\bF_\lambda)\).

It remains to control the region away from the Fermi surface.
Since
\[
\overline{\Bz\setminus\mathcal N_\tau(\bF_\lambda)}
\]
is compact and disjoint from \(\bF_\lambda\), a sufficiently
small complex perturbation remains uniformly separated from the
complexified Fermi surface. Combining these two regions yields
the following global deformation.

\begin{theorem}[Existence of a global admissible deformation]
\label{thm:global_sigma}

Assume Assumption~\ref{asp3}. Then there exist
$\tau>0 $ and $0<\delta_{\rm ext}\le\delta_{\rm int}$,
and a piecewise constant function
\[
\sigma(\balpha)
=
\begin{cases}
\delta_{\rm int},
&
\balpha\in\mathcal N_\tau(\bF_\lambda),
\\[2mm]
\delta_{\rm ext},
&
\balpha\in
\Bz\setminus
\mathcal N_\tau(\bF_\lambda),
\end{cases}
\]
such that for every
\(
\boldsymbol{\omega}\in S^{d-1}
\),
the deformed Brillouin zone
\[
\Bz_{\boldsymbol{\omega}}
=
\Bigl\{
\balpha+\i\sigma(\balpha)\boldsymbol{\omega}
:
\balpha\in\Bz
\Bigr\}
\]
avoids the complexified Fermi surface, i.e.,
\[
\Bz_{\boldsymbol{\omega}}
\cap
\{
\zeta:
\mu(\zeta)=\lambda
\}
=
\varnothing.
\]

More precisely:
\begin{enumerate}

\item
If
\(
\balpha\in\mathcal N_\tau(\bF_\lambda),
\)
then the ray
$\{
\balpha+\i t\boldsymbol{\omega}:\,
0<t<\sigma(\alpha)\}$ satisfies the uniform one-crossing property inherited from
Proposition~\ref{prop:local_crossing}. In particular,
$
\balpha+\i\sigma(\balpha)\boldsymbol{\omega}
$
does not lie on the complexified Fermi surface.

\item
If
\(
\balpha\in
\Bz\setminus
\mathcal N_\tau(\bF_\lambda),
\)
then
$\{
\balpha+\i t\boldsymbol{\omega}:\,
0\leq t\leq\sigma(\balpha)\}$
remains uniformly separated from the complexified Fermi
surface.
\end{enumerate}
Consequently, \(
\Bz_{\boldsymbol{\omega}}
\)
defines a globally admissible complex deformation of the
band function $\mu$. 

\end{theorem}

\begin{proof}

By Proposition~\ref{prop:local_crossing} and the compactness
argument above, there exist uniform constants
$\tau>0$ and $\delta_{\rm int}>0$
such that, for every $\balpha\in\mathcal N_\tau(\bF_\lambda)$ and
$\bomega\in\mathbb S^{d-1}$,
the corresponding complex ray satisfies the uniform
one-crossing property and
\[
\mu(\balpha+\i\delta_{\rm int}\bomega)\neq\lambda.
\]

It remains to consider the region away from the Fermi surface.
Since $
\overline{\Bz\setminus\mathcal N_\tau(\bF_\lambda)}$ is compact and disjoint from
$\bF_\lambda$, we have
\[
|\mu(\balpha)-\lambda|\geq c_0>0,\quad\forall\balpha\in \overline{\Bz\setminus\mathcal N_\tau(\bF_\lambda)}.
\]
By continuity of the holomorphic extension of $\mu$, there exists
$\delta_{\rm ext}>0$ such that
\[
\mu(\balpha+\i t\bomega)\neq\lambda,\quad\forall\,\balpha\in \overline{\Bz\setminus\mathcal N_\tau(\bF_\lambda)},
\,
\bomega\in\mathbb S^{d-1},
\,
0\le t\le\delta_{\rm ext}.
\]
After replacing $\delta_{\rm ext}$ by
\[
\min\{\delta_{\rm ext},\delta_{\rm int}\},
\]
we may assume
\[
0<\delta_{\rm ext}\le\delta_{\rm int}.
\]

Combining the two regions gives
\[
\mu\bigl(\balpha+\i\sigma(\balpha)\bomega\bigr)
\neq\lambda,\quad\forall\,\balpha\in\Bz,\,\bomega\in\mathbb{S}^{d-1}.
\]
 Hence
\[
\Bz_{\bomega}
\cap
\{{\bm\zeta}:\mu({\bm\zeta})=\lambda\}
=
\varnothing.
\]
\end{proof}

\begin{corollary}
\label{cr:deform_allbranch}
Under Assumption~\ref{asp1}-\ref{asp3}, the deformation constructed in
Theorem~\ref{thm:global_sigma} may be chosen uniformly for all resonant
analytic branches associated with the energy $\lambda$.
\end{corollary}

\begin{proof}
Since only finitely many resonant branches occur, the result follows by
applying Theorem~\ref{thm:global_sigma} to each branch.
\end{proof}

\subsection{Non-resonant component}

The remaining spectrum consists of infinitely many non-resonant band
functions. Unlike the resonant component, these bands are not treated
individually. Instead, we return to the operator level and introduce
the spectral projection onto the non-resonant subspace. This allows
all non-resonant bands to be handled simultaneously through the
corresponding reduced operator, thereby establishing the uniform
invertibility required for the final deformation theorem.

\subsubsection{Damped quasi-periodic cell problems}

We begin with the damped resolvent equation
\[
(\L-\lambda-\i\varepsilon)u_\varepsilon=f,
\qquad
\varepsilon>0.
\]
Applying the Floquet--Bloch transform yields, for each
$\balpha\in\Bz$, the fiber problem
\[
(\L(\balpha)-\lambda-\i\varepsilon)
w_\varepsilon(\balpha,\cdot)
=
f.
\]

Let
\[
v_\balpha^\varepsilon(\bx)
=
e^{-\i\balpha\cdot\bx}
w_\varepsilon(\balpha,\bx).
\]
Then
$v_\balpha^\varepsilon\in H_0^1(\Omega)$ satisfies
\begin{equation}
\label{eq:qp_weak_damp}
a_\balpha(v_\balpha^\varepsilon,\phi)
-
(\lambda+\i\varepsilon)
(v_\balpha^\varepsilon,\phi)_{L^2(\Omega)}
=
\left\langle
e^{-\i\balpha\cdot\bx}f,\phi
\right\rangle,
\qquad
\forall\phi\in H_0^1(\Omega),
\end{equation}
where
\[
a_\balpha(u,v)
=
\int_\Omega
\Bigl[
A(\nabla+\i\balpha)u
\cdot
(\nabla-\i\balpha)\overline v
+
Vu\overline v
\Bigr]
\,\d\bx.
\]

For every $\varepsilon>0$ and $\balpha\in\Bz$, the operator
$\L(\balpha)-\lambda-\i\varepsilon$ is invertible. Indeed,
injectivity follows immediately by taking the imaginary part of
the $L^2$-inner product, while surjectivity follows from the
Fredholm alternative. Moreover, the analytic dependence of the
fiber operators on $\balpha$ implies that
$v_\balpha^\varepsilon$ depends analytically on $\balpha$.

Let
\[
\{\mu_m(\balpha),\phi_m(\balpha,\cdot)\}_{m\ge1}
\]
denote the Bloch eigenpairs introduced in
Section~\ref{sec:ft}. The fiber solution admits the spectral
representation
\[
w_\varepsilon(\balpha,\cdot)
=
\sum_{m=1}^{\infty}
\frac{\hat f_m(\balpha)}
{\mu_m(\balpha)-\lambda-\i\varepsilon}
\phi_m(\balpha,\cdot),
\]
where
\[
\hat f_m(\balpha)
=
(f,\phi_m(\balpha,\cdot))_{L^2(\Omega)}.
\]
Consequently,
\begin{equation}
\label{eq:u_damped}
u_\varepsilon(\bx)
=
\sum_{m=1}^{\infty}
\int_\Bz
\frac{\hat f_m(\balpha)}
{\mu_m(\balpha)-\lambda-\i\varepsilon}
\phi_m(\balpha,\bx)
\,\d\balpha.
\end{equation}

This representation naturally separates the spectral modes into
those that meet the energy level $\lambda$ and those that remain
away from it, leading to the resonant/non-resonant decomposition
introduced below.

\subsubsection{Spectral decomposition}
\label{sec:res_decomp}

Recall that
\[
J(\lambda)
=
\left\{
m\ge1:
\exists\,\balpha\in\Bz
\text{ such that }
\mu_m(\balpha)=\lambda
\right\}
\]
is finite by Theorem~\ref{th:kuch3}. Accordingly, for each
$\balpha\in\Bz$, we define the resonant spectral subspace
\[
\mathcal H_{\rm res}(\balpha)
=
\operatorname{span}
\left\{
\phi_m(\balpha,\cdot):
m\in J(\lambda)
\right\},
\]
and denote by
\[
P_\lambda(\balpha):
L^2(\Omega)
\longrightarrow
\mathcal H_{\rm res}(\balpha)
\]
the corresponding orthogonal projection. We further set
\[
Q_\lambda(\balpha)
=
I-P_\lambda(\balpha),
\qquad
\mathcal H_{\rm nr}(\balpha)
=
Q_\lambda(\balpha)L^2(\Omega).
\]
Thus
\[
L^2(\Omega)
=
\mathcal H_{\rm res}(\balpha)
\oplus
\mathcal H_{\rm nr}(\balpha).
\]

The spectral representation of the damped fiber solution therefore
decomposes as
\[
w_\varepsilon
=
w_\varepsilon^{\rm res}
+
w_\varepsilon^{\rm nr},
\]
where
\[
w_\varepsilon^{\rm res}(\balpha)
=
P_\lambda(\balpha)w_\varepsilon(\balpha)
=
\sum_{m\in J(\lambda)}
\frac{\hat f_m(\balpha)}
{\mu_m(\balpha)-\lambda-\i\varepsilon}
\phi_m(\balpha),
\]
and
\[
w_\varepsilon^{\rm nr}(\balpha)
=
Q_\lambda(\balpha)w_\varepsilon(\balpha)
=
\sum_{m\notin J(\lambda)}
\frac{\hat f_m(\balpha)}
{\mu_m(\balpha)-\lambda-\i\varepsilon}
\phi_m(\balpha).
\]

The resonant component is handled by the deformation theory of
Section~\ref{sec:complex_Fermi}. The following theorem shows that
the non-resonant component remains uniformly separated from
$\lambda$ under sufficiently small complex perturbations of the
Floquet parameter.

\begin{theorem}[Uniform invertibility of the non-resonant part]
\label{thm:non-resonant-invertibility}

There exists $\delta_{\rm nr}>0$ such that
\[
T_{\balpha,\bomega,t}
:=
Q_\lambda(\balpha)
\bigl(
\L(\balpha+\i t\bomega)-\lambda
\bigr)
Q_\lambda(\balpha)
:
\mathcal H_{\rm nr}(\balpha)
\longrightarrow
\mathcal H_{\rm nr}(\balpha)
\]
is invertible for all $\balpha\in\Bz$, $\bomega\in\mathbb S^{d-1}$ and $|t|\le\delta_{\rm nr}$. 
Moreover, its inverse is uniformly bounded:
\[
\left\|
T_{\balpha,\bomega,t}^{-1}
\right\|
\leq C<\infty,\quad\forall\,
\balpha\in\Bz,\,
\bomega\in\mathbb S^{d-1},\,
|t|\le\delta_{\rm nr}.
\]
\end{theorem}

\begin{proof}

For \(t=0\), the operator
\[
T_{\balpha,\bomega,0}
=
Q_\lambda(\balpha)
\bigl(\L(\balpha)-\lambda\bigr)
Q_\lambda(\balpha)
\]
is invertible on \(\mathcal H_{\rm nr}(\balpha)\) by the
definition of the non-resonant subspace.

Since the family of operators $
T_{\balpha,\bomega,t} $ depends continuously on
\((\balpha,\bomega,t)\), invertibility persists under sufficiently
small perturbations. Therefore, for every
$(\balpha,\bomega)\in \Bz\times\mathbb S^{d-1}$, 
there exist a neighborhood
\(U_{\balpha,\bomega}\) of \((\balpha,\bomega)\) and a constant
\(\delta_{\balpha,\bomega}>0\) such that
\(T_{\bbeta,\v,t}\) is invertible whenever
$(\bbeta,\v)\in U_{\balpha,\bomega}$ and 
$|t|<\delta_{\balpha,\bomega}$.

Since $\Bz\times\mathbb S^{d-1}$ is compact, finitely many such neighborhoods suffice to cover it.
Taking the minimum of the corresponding values of
\(\delta_{\balpha,\bomega}\), we obtain a uniform constant
\(\delta>0\) such that  $T_{\balpha,\bomega,t}$ is invertible for all
$\balpha\in\Bz$, $\bomega\in\mathbb S^{d-1}$ and $|t|\le\delta$.

Finally, the inverse depends continuously on the parameters
wherever the operator is invertible. Since
$
\Bz\times\mathbb S^{d-1}\times[-\delta,\delta]
$
is compact, it follows that $T^{-1}_{\balpha,\bomega,t}$
is uniformly bounded with respect to $\balpha,\bomega,t$.
\end{proof}

Thus, for sufficiently small complex perturbations of the
Floquet parameter, no additional singularities arise from the
non-resonant spectral component. All possible singularities are
therefore associated with the resonant branches meeting the
energy level $\lambda$, which are precisely the branches controlled
by the deformation theory of Section~\ref{sec:complex_Fermi}.

\subsection{Global complex deformation of the fiber representation}
\label{sec:global-fiber-deformation}

We now combine the branchwise deformation theory developed in
Section~\ref{sec:deformation} with the resonant--non-resonant
decomposition established in Section~\ref{sec:res_decomp} to
construct an admissible complex deformation for the full fiber
operator.

Before presenting the construction, we summarize in
Figure~\ref{fig:geometry} the geometric framework developed in the
previous sections. The figure illustrates the Fermi surface together
with the observation direction, the decomposition into the
non-grazing and grazing parts, a tubular neighbourhood, and the local
complexifications near the grazing set. These geometric ingredients
form the basis of the global contour deformation constructed below.

\begin{figure}[t]
    \centering
    \includegraphics[width=0.72\textwidth]{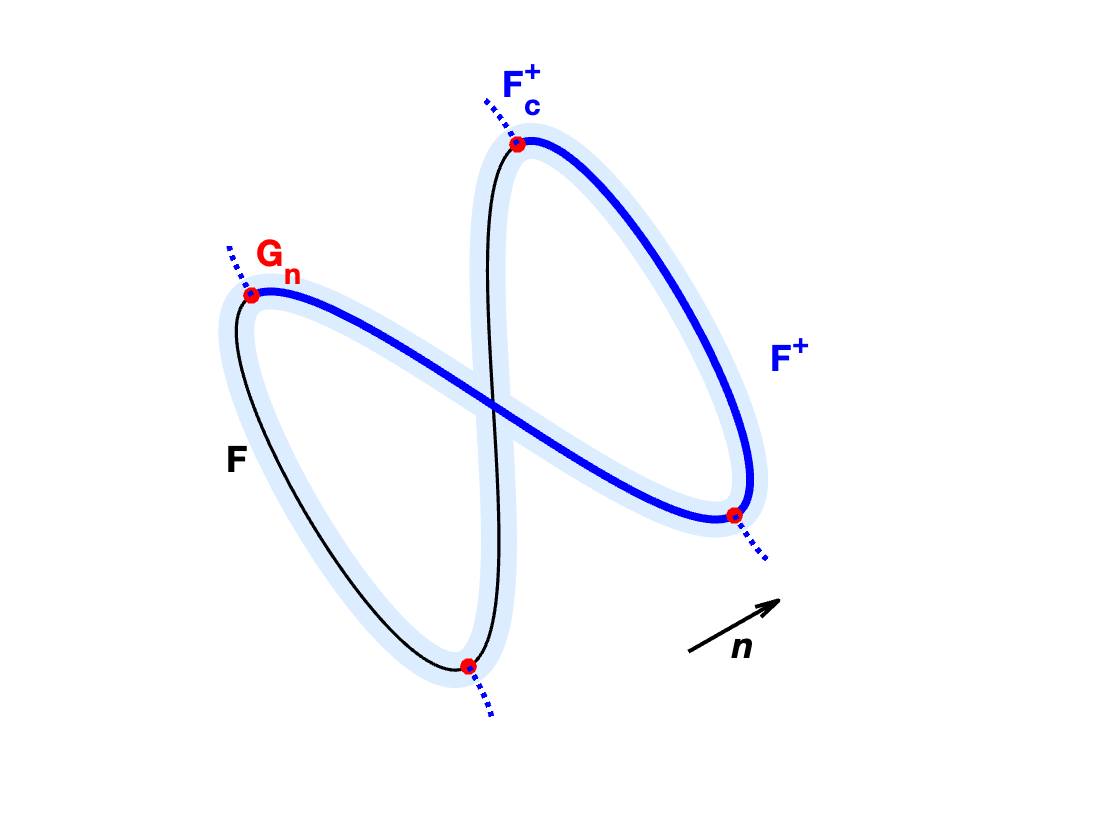}
    \caption{Schematic illustration of the geometric framework underlying the contour deformation. The Fermi surface $\bF_\lambda$ is shown as a solid curve. The non-grazing component $\bF^+$ associated with the direction $\n$ is highlighted by a thick solid curve. Circular markers indicate the grazing set $G_\n$. The shaded region represents a tubular neighbourhood of the Fermi surface $\mathcal{N}_\tau(\bF_\lambda)$, and the dashed curves illustrate local complexifications near the grazing set. The arrow indicates the observation direction $\n$.}
    \label{fig:geometry}
\end{figure}

Compared with the branchwise analysis, the only additional issue is
the presence of band crossings, where several analytic branches may
contribute simultaneously to the same fiber operator. The following
theorem shows that the branchwise deformation can nevertheless be
assembled into a uniform admissible deformation for the full fiber
problem.

\begin{theorem}[Admissible deformation for the fiber problem]
\label{thm:admissible-fiber}

Let $\lambda\in\sigma(\L)$ be a regular frequency.
Then there exist uniform constants
$
\tau>0$ and $
0<\delta_{\rm ext}\le\delta_{\rm int}$,
such that the piecewise constant function
\[
\sigma(\balpha)=
\begin{cases}
\delta_{\rm int},
&
\balpha\in\mathcal N_\tau(\bF_\lambda),\\[2mm]
\delta_{\rm ext},
&
\balpha\in
\Bz\setminus\mathcal N_\tau(\bF_\lambda),
\end{cases}
\]
satisfies the following properties.

\begin{enumerate}
\item[(i)]
The branchwise deformation results established in
Section~\ref{sec:deformation} hold simultaneously for every local
resonant analytic branch.

\item[(ii)]
The non-resonant component
\[
\L_{\rm nr}(\balpha+\i t\bomega)-\lambda I
\]
remains uniformly invertible for all
$
0\le t\le\sigma(\balpha)$,
$
\balpha\in\Bz$  and $
\bomega\in\mathbb S^{d-1}$.
\end{enumerate}

Consequently, each local resonant analytic branch contributes at
most one singular point of the fiber resolvent along the deformation
ray, while the non-resonant component contributes none.
Moreover,
\[
\L(\balpha+\i\sigma(\balpha)\bomega)-\lambda I
\]
is invertible for every
$\balpha\in\Bz$
and
$\bomega\in\mathbb S^{d-1}$.
Hence
\[
\Bz_\bomega
=
\{
\balpha+\i\sigma(\balpha)\bomega:
\balpha\in\Bz
\}
\]
is an admissible deformation for the fiber resolvent.
\end{theorem}

\begin{proof}

We first treat the non-resonant component.
By Theorem~\ref{thm:non-resonant-invertibility},
there exists a constant
$
\delta_{\rm nr}>0
$
such that
\[
\L_{\rm nr}(\balpha+\i t\bomega)-\lambda I
\]
is invertible for every
$
\balpha\in\Bz$, $
0\le t\le\delta_{\rm nr}$ and $
\bomega\in\mathbb S^{d-1}.
$

Next we consider the resonant component.
By Corollary~\ref{cr:deform_allbranch},
there exist constants
$\tau>0$ and $\delta_{\rm int}\geq\delta_{\rm ext}>0$,
such that every local resonant analytic branch admits the deformation
constructed in Section~\ref{sec:deformation}.
Since only finitely many resonant branches occur, these constants may be
chosen uniformly.
Replacing $\delta_{\rm int}$ by
\[
\min\{\delta_{\rm int},\delta_{\rm nr}\}
\]
and shrinking $\tau $ if necessary. Since $\overline{\Bz\setminus\mathcal{N}_\tau(\bF_\lambda)}$ is compact, by perturbation theory, we can again find a $0<\delta_{\rm ext}\leq\delta_{\rm int}$ such that $\mathcal{L}(\balpha+\i t \bomega)-\lambda I$ is uniformly invertible for all $\balpha\in \overline{\Bz\setminus\mathcal{N}_\tau(\bF_\lambda)}$, $\bomega\in\mathbb{S}^{d-1}$ and $|t|\leq \delta_{\rm ext}$.

Define
\[
\sigma(\balpha)=
\begin{cases}
\delta_{\rm int},
&
\balpha\in\mathcal N_\tau(\bF_\lambda),
\\[2mm]
\delta_{\rm ext},
&
\balpha\notin\mathcal N_\tau(\bF_\lambda).
\end{cases}
\]

Then, by construction, the branchwise deformation theorem applies to every
resonant analytic branch, while the non-resonant component remains uniformly
invertible along the entire deformation. Hence
\[
\L(\balpha+\i\sigma(\balpha)\bomega)-\lambda I
\]
is invertible for every
\(
\balpha\in\Bz
\)
and
\(
\bomega\in\mathbb S^{d-1}.
\)
Therefore
\[
\Bz_\bomega
=
\{
\balpha+\i\sigma(\balpha)\bomega:
\balpha\in\Bz
\}
\]
is an admissible deformation for the fiber resolvent.

\end{proof}

\begin{remark}
The admissible deformation obtained in
Theorem~\ref{thm:admissible-fiber}
is, in general, necessarily non-constant.
Indeed, a constant deformation height
\[
\sigma(\balpha)\equiv\delta>0
\]
cannot avoid all complexified singularities.
 This is one of the essential differences between the higher-dimensional problems and the one-dimensional
setting studied in our previous work \cite{Zhang2019a}.

In one dimension, the Fermi surface consists of isolated
points and there is no grazing set. Consequently, a
constant deformation height is sufficient to avoid all
complexified singularities. In contrast, in dimensions
\(d\ge2\), the grazing set \(G_\n\) forms a codimension-one
stratum of the Fermi surface and generates the branching
phenomena described in Section~\ref{sec:dift}. The
associated singular geometry forces the admissible
deformation to depend on the location of \(\alpha\).

This phenomenon is already visible for the free Laplacian
in two dimensions. Indeed,
\[
\mu_{\bm j}(\balpha)
=
|\bm j+\balpha|^2,
\qquad
\bm j\in\mathbb Z^2.
\]
Assume that a constant deformation
\(
\sigma(\balpha)\equiv\delta>0
\)
were admissible. Writing
\[
\bm j+\balpha
=
(R\cos\theta,R\sin\theta),
\qquad
\n
=
(\cos\phi,\sin\phi),
\]
one obtains
\[
\mu_{\bm j}(\balpha+\i\delta\n)
=
R^2-\delta^2
+
2\i R\delta\cos(\theta-\phi).
\]
Note that
\[
\cos(\theta-\phi)=0
\quad\Leftrightarrow\quad
(\bm j+\balpha)\cdot\n=0\quad\Leftrightarrow\quad
\nabla\mu_{\bm j}(\balpha)\cdot\n=0.
\]
Thus this condition precisely characterizes the grazing set
\(G_\n\).

Choosing
\[
R^2=\lambda+\delta^2,
\qquad
\cos(\theta-\phi)=0,
\]
gives
\[
\mu_{\bm j}(\balpha+\i\delta\n)
=
\lambda.
\]
Hence the deformed contour intersects the complexified
Fermi surface exactly along grazing directions.

This shows that the obstruction to a constant deformation
height is generated by the grazing geometry. In one
dimension, where no grazing set exists, a constant
deformation is possible. By contrast, in dimensions
\(d\ge2\), the presence of \(G_{\n}\)
forces the deformation function \(\sigma(\balpha)\) to
depend on the location of \(\balpha\).
\end{remark}

\begin{proposition}[Non-resonant limiting absorption principle]
\label{prop:non-resonant-lap}
Fix $\n\in\mathbb{S}^{d-1}$ and let
\[
\Bz_{\bm n}
=
\Bigl\{
\balpha+\i\sigma(\balpha)\bm n:
\balpha\in\Bz
\Bigr\}
\]
be the admissible deformation constructed in
Theorem~\ref{thm:admissible-fiber}. Then
\[
Q_\lambda(\zeta)
\bigl(
\L(\zeta)-\lambda-\i\varepsilon
\bigr)^{-1}
Q_\lambda(\zeta)\rightarrow Q_\lambda(\zeta)
\bigl(
\L(\zeta)-\lambda
\bigr)^{-1}
Q_\lambda(\zeta),\quad\epsilon\rightarrow 0^+
\]
converges uniformly on \(\Bz_{\bm n}\).

\end{proposition}

\begin{proof}

By Theorem~\ref{thm:non-resonant-invertibility},
$
Q_\lambda(\zeta)
\bigl(
\L(\zeta)-\lambda
\bigr)
Q_\lambda(\zeta)
$
is uniformly invertible on \(\Bz_{\bm n}\).
Hence both
\[
Q_\lambda(\zeta)
(\L(\zeta)-\lambda-\i\varepsilon)^{-1}
Q_\lambda(\zeta)\quad
\text{ 
and
 }\quad
Q_\lambda(\zeta)
(\L(\zeta)-\lambda)^{-1}
Q_\lambda(\zeta)
\]
are uniformly bounded for sufficiently small
\(\varepsilon\). 
The claim follows from the resolvent identity
\[
(\L-\lambda-\i\varepsilon)^{-1}
-
(\L-\lambda)^{-1}
=
\i\varepsilon
(\L-\lambda-\i\varepsilon)^{-1}
(\L-\lambda)^{-1}.
\]

\end{proof}

Proposition~\ref{prop:non-resonant-lap}
shows that the non-resonant component is completely
regular under the limiting absorption process.
Hence the entire singular structure of the fiber resolvent is carried by the resonant component.
The subsequent sections are devoted exclusively to the
analysis of these resonant contributions.

\section{The Limiting Absorption Principle}
\label{sec:lap}

The admissible deformation constructed in Section~\ref{sec:deform} removes all singularities from the deformed contour while recording the resonant contributions through the enclosed poles. Consequently, the limiting absorption solution can be recovered by contour deformation and the residue theorem.
With the directional organization of the spectral representation established in the previous sections, the limiting absorption solution can now be analyzed slice by slice.

Fix an observation direction
$\n\in\mathbb{S}^{d-1}$. The SSA decomposes the Brillouin zone
into one-dimensional slices parallel to $\n$. This reduces
the Floquet--Bloch representation
\[
u_\epsilon(\bx)
=
\int_{\Bz}
w_\epsilon(\balpha,\bx)\,\d\balpha
\]
to a family of contour integrals. The admissible
deformation constructed in the previous sections separates
the analytic and singular contributions: the non-resonant
component is deformed analytically, while the resonant
component is recovered through residue contributions.

The section is organized as follows. We first derive the
contour deformation formulas for both the non-resonant and
resonant components and rewrite the residue contribution as
an integral over the complexified Fermi surfaces. We then
pass to the limit $\epsilon\to0^+$ to establish the
limiting absorption principle. Finally, we analyze the
resulting solution by decomposing it into its evanescent,
propagating, and tangential components. This section not only establishes the limiting absorption principle but also derives the geometric decomposition that forms the basis of the subsequent asymptotic analysis.

\subsection{Contour deformation}

For each slice of the SSA decomposition, the contour
deformation naturally separates into the non-resonant and
resonant components. The former is treated by analytic
continuation, whereas the latter gives rise to residue
contributions associated with the resonant bands. In this
subsection, we derive the corresponding contour
representation and rewrite the residue contribution as an
integral over the complexified Fermi surfaces.

\subsubsection{The non-resonant contribution}

Fix an observation direction
\(
\n\in\mathbb S^{d-1}
\)
and write
\[
\balpha=\bgamma\t+s\n,
\]
where
\(
\bgamma\in D
\)
denotes the transverse variables. The corresponding slice
decomposition is
\[
\int_{\Bz}F(\balpha)\,\d\balpha
=
\int_D
\int_{\ell(\bgamma)}
F(\bgamma\t+s\n)
\,\d s\,\d\bgamma .
\]
where for each slice,
\[
\ell(\bgamma)
=
[\ell_l(\bgamma),\ell_r(\bgamma)]
\]
denotes the original contour. We define the deformed contour
\[
\ell_\n(\bgamma)
=
\Bigl\{
s+\i\sigma(\bgamma\t+s\n):
s\in(\ell_l(\bgamma),\ell_r(\bgamma))
\Bigr\},
\]
together with the region
\[
R(\bgamma)
=
\Bigl\{
s+\i\delta:
s\in(\ell_l(\bgamma),\ell_r(\bgamma)),
\;
0<\delta<\sigma(\bgamma\t+s\n)
\Bigr\},
\]
bounded by $\ell(\bgamma)$, $\ell_\n(\bgamma)$, $C_\bgamma^1$ and $C_\bgamma^2$ (for the definitions see \eqref{eq:left_C}-\eqref{eq:right_C}).

The non-resonant component is obtained by projecting onto
the complement of the resonant eigenspaces. Writing
\[
Q_\lambda(\balpha)
=
I-P_\lambda(\balpha),
\]
we define
\[
w_\epsilon^{\rm nr}(\balpha,\bx)
=
(\L_\alpha-\lambda-\i\epsilon)^{-1}
Q_\lambda(\balpha)f(\balpha,\cdot).
\]

\begin{theorem}[Non-resonant reduction]
\label{thm:nonres}

For every sufficiently small $\epsilon>0$, the non-resonant
component
$
w_\epsilon^{\rm nr}(\balpha,\cdot)
$
extends analytically to a neighborhood of the deformed
Brillouin zone $\Bz_\n$. Consequently,
\begin{equation}
\label{eq:con_def_nr_eps}
u_\epsilon^{\rm nr}(\bx)
=
\int_{\Bz_\n}
w_\epsilon^{\rm nr}(\balpha,\bx)\,\d\balpha,
\end{equation}
and the contour deformation produces no residue
contribution.

\end{theorem}

\begin{remark}
Since the deformation function \(\sigma(\balpha)\) is only
piecewise constant, the deformed Brillouin zone
\[
\Bz_\n
=
\{
\balpha+\i\sigma(\balpha)\n:
\balpha\in\Bz
\}
\]
is understood through its piecewise-translational
parametrization. Accordingly, integrals over \(\Bz_\n\) are
defined chartwise and summed over the corresponding
pieces.
\end{remark}

\begin{proof}

Applying the slice decomposition gives
\[
u_\epsilon^{\rm nr}(\bx)
=
\int_D
\int_{\ell(\bgamma)}
w_\epsilon^{\rm nr}
(\bgamma\t+s\n,\bx)
\,\d s\,\d\bgamma .
\]
From Proposition~\ref{prop:non-resonant-lap}, the map
\[
s\longmapsto
w_\epsilon^{\rm nr}
(\bgamma\t+s\n,\bx)
\]
extends analytically to a neighborhood of
\(R(\bgamma)\).
Therefore,  Cauchy's theorem yields
\[
\oint_{\partial R(\bgamma)}
w_\epsilon^{\rm nr}
(\bgamma\t+s\n,\bx)\,ds
=
0.
\]
Hence
\[
\int_{\ell(\bgamma)}
w_\epsilon^{\rm nr}
(\bgamma\t+s\n,\bx)\,\d s
=
\int_{\ell_\n(\bgamma)}
w_\epsilon^{\rm nr}
(\bgamma\t+s\n,\bx)\,\d s
+
\Bigl(
\int_{C_\bgamma^1}
-
\int_{C_\bgamma^2}
\Bigr)
w_\epsilon^{\rm nr}
(\bgamma\t+s\n,\bx)\,\d s.
\]
The two boundary contributions cancel pairwise by the
periodicity of the Floquet--Bloch fibers. The cancellation of the boundary terms is proved in Appendix B.2. Integrating with
respect to \(\bgamma\) yields \eqref{eq:con_def_nr_eps}.

\end{proof}

\subsubsection{The resonant contribution}

We now turn to the resonant component. Unlike the
non-resonant part, the contour deformation crosses the
poles associated with the resonant bands. Consequently,
the contour deformation is governed by the residue theorem.

Fix \(j\in J(\lambda)\). The enclosed poles are localized
near the outgoing portion of the \(j\)-th Fermi sheet. We
first derive a residue representation for the pole
contribution, and then rewrite it as an integral over the
complexified outgoing Fermi surfaces.

\begin{theorem}[Residue representation]
\label{thm:residue_representation}

For every sufficiently small $\epsilon>0$,
\[
u_{j,\epsilon}^{\rm res}(\bx)
=
u_{j,\epsilon}^{\rm reg}(\bx)
+
u_{j,\epsilon}^{\rm pole}(\bx),
\]
where
\[
u_{j,\epsilon}^{\rm reg}(\bx)
=
\int_D
\int_{\ell_\n(\bgamma)}
w_{j,\epsilon}^{\rm res}
(\bgamma\t+s\n,\bx)
\,\d s\,\d\bgamma,
\]
and
\[
u_{j,\epsilon}^{\rm pole}(\bx)
=
2\pi \i
\int_D
\sum_{m=1}^{N(\bgamma)}
{\rm{Res}}
\Bigl(
w_{j,\epsilon}^{\rm res},
z_m(\bgamma,\epsilon)
\Bigr)
\,\d\bgamma .
\]

\end{theorem}

\begin{proof}

Fix \(\bgamma\in D\). By
Lemma~\ref{lm:finite_inter}, the slice
\(\ell(\bgamma)\) intersects the  Fermi surface at only
finitely many points. For each such intersection,
Proposition~\ref{prop:local_crossing} yields at most one
corresponding pole enclosed by the contour
\(\partial R(\bgamma)\). Hence
\(\partial R(\bgamma)\) encloses only finitely many
isolated poles. Applying the residue theorem gives
\[
\int_{\ell(\bgamma)}
w_{j,\epsilon}^{\rm res}
(\bgamma\t+s\n,\bx)\,\d s
=
\int_{\ell_\n(\bgamma)}
w_{j,\epsilon}^{\rm res}
(\bgamma\t+s\n,\bx)\,\d s
+
2\pi\i
\sum_{m=1}^{N(\bgamma)}
\operatorname{Res}
\Bigl(
w_{j,\epsilon}^{\rm res},
z_m(\bgamma,\epsilon)
\Bigr)
+
B(\bgamma),
\]
where \(B(\bgamma)\) denotes the contribution from the two
vertical boundary segments.

The boundary integral \(B(\bgamma)\) again cancel pairwise by the periodicity
of the Floquet--Bloch fibers as shown in Appendix~\ref{sec:bdr}. 
Integrating the  identity with respect to
\(\bgamma\in D\) yields
\[
u_{j,\epsilon}^{\rm res}
=
u_{j,\epsilon}^{\rm reg}
+
u_{j,\epsilon}^{\rm pole},
\]
which completes the proof.

\end{proof}

The residue representation obtained above is still expressed
in terms of the slice parameter $\bgamma$. To identify the
outgoing contribution with the geometry of the Fermi
surface, we next rewrite the residue sum as an integral over
the complexified outgoing Fermi sheets. This requires three
ingredients: a global parametrization of the sheets, the
associated surface measure, and a uniform estimate for the
resulting surface density.

The local parametrizations obtained in
Theorem~\ref{th:finite-param-outgoing} are now assembled
into a finite atlas covering the outgoing real and
complexified Fermi surfaces.

Fix \(j\in J(\lambda)\). Let
$
\bS_{j,1},\dots,\bS_{j,K_j}
$
denote the relevant analytic branches of the band function
\(\mu_j\) whose outgoing Fermi sheets intersect the energy
surface \(\{\mu_j=\lambda\}\). Applying
Theorem~\ref{th:finite-param-outgoing}
to each branch
\(\bS_{j,m}\),
we obtain finitely many outgoing charts
\[
\Gamma_{j,m,\ell}^+,
\qquad
\ell=1,\dots,N_{j,m},
\]
covering the corresponding real outgoing sheet.
Among these,
\[
\ell=1,\dots,M_{j,m},
\]
are precisely the charts intersecting the grazing set, and
each admits an associated complex chart
\[
\Gamma_{j,m,\ell}^-,
\qquad
\ell=1,\dots,M_{j,m},
\]
covering the corresponding component of the complexified
outgoing sheet.

More precisely, for each chart there exist orthonormal
frames 
\[
\left\{\t_{j,m,\ell}^{(1)},\ldots,\t_{j,m,\ell}^{(d-1)},\n_{j,m,\ell}\right\},
\]
with parameter domains
\[
V_{j,m,\ell}^{\pm}\subset\R^{d-1},
\]
(the domains \(V_{j,m,\ell}^{-}\) being present only for the
grazing charts), and local branches
\[
s_{j,m,\ell}(\bgamma,\epsilon),
\]
such that
\[
\Sigma_{j,m,\ell}^{\pm}(\epsilon)
=
\Bigl\{
\bgamma\cdot\t_{j,m,\ell}
+
s_{j,m,\ell}(\bgamma,\epsilon)\n_{j,m,\ell}
:
\bgamma\in V_{j,m,\ell}^{\pm}
\Bigr\},
\]
and the complexified outgoing sheet admits the finite
decomposition
\begin{equation}
\label{eq:par_Sigma}
\Sigma_j^+(\epsilon)
=
\bigcup_{m=1}^{K_j}
\left(
\bigcup_{\ell=1}^{N_{j,m}}
\Sigma_{j,m,\ell}^{+}(\epsilon)
\cup
\bigcup_{\ell=1}^{M_{j,m}}
\Sigma_{j,m,\ell}^{-}(\epsilon)
\right).
\end{equation}

Passing to the limit as \(\epsilon\to0^+\), the
corresponding real and complex outgoing Fermi surfaces are
covered by
\begin{equation}
\label{eq:par_F}
\bF_j^+
=
\bigcup_{m=1}^{K_j}
\bigcup_{\ell=1}^{N_{j,m}}
\Gamma_{j,m,\ell}^{+},
\qquad
\bF_{j,c}^{+}
=
\bigcup_{m=1}^{K_j}
\bigcup_{\ell=1}^{M_{j,m}}
\Gamma_{j,m,\ell}^{-},
\end{equation}
where
\[
\Gamma_{j,m,\ell}^{\pm}
=
\Bigl\{
\bgamma\cdot\t_{j,m,\ell}
+
s_{j,m,\ell}(\bgamma,0)\n_{j,m,\ell}
:
\bgamma\in V_{j,m,\ell}^{\pm}
\Bigr\}.
\]

With the finite parametrization of the complexified outgoing
sheets in hand, we next introduce the corresponding surface
measure.

\begin{definition}[Parametrized surface measure on complexified sheets]
\label{def:param_surface_measure}

The finite parametrization constructed above naturally
induces a surface measure on each component of the
complexified outgoing Fermi surface. This allows the
residue representation to be rewritten as an integral over
the complexified sheets.

Let
\[
\Sigma
=
\{\Phi(\bgamma):\bgamma\in V\}\quad\text{where }\quad  \Phi(\bgamma)
=
\bgamma\t+s(\bgamma)\n
\]
be one of the parametrized components of
\(\Sigma_j^+(\epsilon)\). 
Write
$
s=s_1+i s_2,
$
where \(s_1,s_2:V\to\mathbb R\). We regard
\(\Sigma\) as a real \((d-1)\)-dimensional submanifold of
\(\mathbb R^{d+1}\) via the embedding
\[
\Phi(\bgamma)
\equiv
(\bgamma,s_1(\bgamma),s_2(\bgamma)).
\]
The associated surface Jacobian is given by the Gram
determinant
\[
J_\Phi(\bgamma)
=
\sqrt{
1
+
|\nabla_\bgamma s_1|^2
+
|\nabla_\bgamma s_2|^2
+
|\nabla_\bgamma s_1|^2
|\nabla_\bgamma s_2|^2
-
\bigl(
\nabla_\bgamma s_1
\cdot
\nabla_\bgamma s_2
\bigr)^2
}.
\]
The induced surface measure is defined by
\[
\d S(\balpha)
=
J_\Phi(\bgamma)\,\\d\bgamma,
\qquad
\balpha=\Phi(\bgamma),
\]
so that
\begin{equation}
\label{eq:part_complex_sheet}
\int_\Sigma
G(\balpha)\,\d S(\balpha)
:=
\int_V
G(\Phi(\bgamma))
J_\Phi(\bgamma)\,\d\bgamma.
\end{equation}

For the finite parametrization
\eqref{eq:par_Sigma}, the surface integral is understood
chartwise and summed over all parametrized components.

\end{definition}

Before deriving the surface representation, we establish a uniform estimate for the coefficient arising in the change of variables from the slice parametrization to the surface parametrization.

\begin{lemma}[Uniform control of the residue density]
\label{lem:est_denominator}
Let Assumption \ref{asp1} hold. Let
\[
\Phi_\epsilon(\bgamma)
=
\bgamma \t+s(\bgamma,\epsilon)\n
\]
be a local parametrization of a branch of
\(
\Sigma^+(\epsilon)
\).
Then, for some sufficiently small \(\epsilon_0>0\),
\[
|\partial_s\mu(\Phi_\epsilon(\bgamma))|
\,J_{\Phi_\epsilon}(\bgamma)
\ge
|\nabla\mu(\Phi_\epsilon(\bgamma))|
\ge C,
\]
uniformly for \(0<\epsilon\le\epsilon_0\).

\end{lemma}

\begin{proof}
For each \(0<\epsilon\le\epsilon_0\), by definition,
\[
\mu(\Phi_\epsilon(\bgamma))=\mu(\bgamma\t+s(\bgamma,\epsilon)\n)
=
\lambda+i\epsilon.
\]
Differentiating this identity with respect to \(\bgamma\) gives
\[
\nabla_\bgamma\mu(\Phi_\epsilon(\bgamma))
+
\partial_s\mu(\Phi_\epsilon(\bgamma))
\,\nabla_\bgamma s(\bgamma,\epsilon)
=
0.
\]
Since the poles are simple for every fixed
\(\epsilon>0\),
\(
\partial_s\mu(\Phi_\epsilon(\bgamma))
\neq0,
\)
and hence
\[
\nabla_\bgamma s(\bgamma,\epsilon)
=
-
\frac{
\nabla_\bgamma\mu(\Phi_\epsilon(\bgamma))
}{
\partial_s\mu(\Phi_\epsilon(\bgamma))
}.
\]

By Definition~\ref{def:param_surface_measure},
\[
J_{\Phi_\epsilon}(\bgamma)
\ge
\sqrt{
1+
|\nabla_\bgamma s(\bgamma,\epsilon)|^2
}.
\]
According to Assumption ~\ref{asp1}, the following estimation holds uniformly for
\(0\le\epsilon\le\epsilon_0\):
\[
\begin{aligned}
\bigl|
\partial_s\mu(\Phi_\epsilon(\bgamma))
\bigr|
J_{\Phi_\epsilon}(\bgamma)
&\ge
\sqrt{
\bigl|
\partial_s\mu(\Phi_\epsilon(\bgamma))
\bigr|^2
+
\bigl|
\nabla_\bgamma\mu(\Phi_\epsilon(\bgamma))
\bigr|^2
}
\\
&=
|\nabla\mu(\Phi_\epsilon(\bgamma))|\ge
C>0.
\end{aligned}
\]
Thus, even though
\(
\partial_s\mu(\Phi_\epsilon(\bgamma))
\)
may approach zero as
\(\epsilon\to0^+\),
the growth of the surface Jacobian exactly compensates for this loss of transversality. Therefore the coefficient appearing in the surface representation remains uniformly bounded.
\end{proof}

The local surface representations obtained on the
individual charts combine into the following global
representation over the complexified outgoing Fermi
surface.

\begin{theorem}[Fermi-surface representation of the pole contribution]
\label{thm:pole_surface}

For every sufficiently small \(\epsilon>0\), the pole
contribution admits the following surface representation:
\[
u_{j,\epsilon}^{\rm pole}(\bx)
=
2\pi \i
\int_{\Sigma^+(\epsilon)}
\frac{
\hat f_j(\balpha)\,
\phi_j(\balpha,\bx)
}
{
\bigl(
\nabla\mu_j(\balpha)\cdot\n^{(\ell)}
\bigr)
J_\Phi(\bgamma)
}
\,\d S(\balpha),
\]
where \(dS\) denotes the parametrized surface measure
introduced in Definition~\ref{def:param_surface_measure}.

\end{theorem}

\begin{proof}
Recall the finite parametrization of
\(\Sigma^+(\epsilon)\).
On each chart, the residue representation is transformed
into a surface integral by the associated
parametrization. For each chart, let
\[
\Phi_{\ell,\epsilon}(\bgamma)
=
\bgamma\t^{(\ell)}
+
s_{j,\ell}(\bgamma,\epsilon)\n^{(\ell)}.
\]

On each chart, the residue at the simple pole
\(s_{j,\ell}(\bgamma,\epsilon)\) is given by
\[
\operatorname{Res}
\left(
\frac{
\hat f_j(\bgamma\t+s\n)\,
\phi_j(\bgamma\t+s\n,\bx)
}
{
\mu_j(\bgamma\t+s\n)-\lambda-\i\epsilon
},
\,s_{j,\ell}(\bgamma,\epsilon)
\right)
=
\frac{
\hat f_j(\balpha)\,
\phi_j(\balpha,\bx)
}
{
\partial_s\mu_j(\balpha)
},
\qquad
\balpha
=
\Phi_{\ell,\epsilon}(\bgamma).
\]

Substituting this identity into the residue representation
and applying the change of variables induced by
\(\Phi_{\ell,\epsilon}\), together with
Definition~\ref{def:param_surface_measure}, we obtain
\begin{align*}
u_{j,\epsilon}^{\rm pole}(\bx)
&=
2\pi \i
\sum_{m=1}^{K_j}
\sum_{\ell=1}^{N_{j,m}}
\int_{V_\ell}
\frac{
\hat f_j(\balpha)\,
\phi_j(\balpha,\bx)
}
{
\nabla\mu_j(\balpha)\cdot\n^{(\ell)}
}
\,\d\bgamma
\\
&=
2\pi \i
\sum_{m=1}^{K_j}
\sum_{\ell=1}^{N_{j,m}}
\int_{\Sigma_{j,m,\ell}^+(\epsilon)}
\frac{
\hat f_j(\balpha)\,
\phi_j(\balpha,\bx)
}
{
\bigl(
\nabla\mu_j(\balpha)\cdot\n^{(\ell)}
\bigr)
J_\Phi(\bgamma)
}
\,\d S(\balpha).
\end{align*}

Corollary~\ref{th:cor:kuch1} and
Lemma~\ref{lem:est_denominator}
guarantee that each chartwise surface integral is well
defined.
Since the outgoing complex Fermi surface is covered by
finitely many parametrized charts, these chartwise
integrals combine into a single integral over
\(\Sigma^+(\epsilon)\).
Hence the local surface representations glue together to
define the desired global integral over
\(\Sigma^+(\epsilon)\).
\end{proof}

\subsection{The final LAP solution}

From the previous subsections, the LAP solution has been
decomposed into a non-resonant part, a regular resonant
part, and a pole contribution,
\[
u_\epsilon(\bx)
=
u_\epsilon^{\rm nr}(\bx)
+
\sum_{j\in J(\lambda)}
\Bigl(
u_{j,\epsilon}^{\rm reg}(\bx)
+
u_{j,\epsilon}^{\rm pole}(\bx)
\Bigr).
\]
We now pass to the limit
\(\epsilon\to0^+\)
for each component separately and combine the resulting
limits to obtain the limiting absorption solution.

\begin{theorem}[Limit of the regular contributions]
\label{thm:regular_limit}
Assume Assumptions~\ref{asp1}--\ref{asp3}.
Then the non-resonant contribution and the regular part of
the resonant contribution converge as
\(\epsilon\to0^+\):
\[
\lim_{\epsilon\to0^+}
u_\epsilon^{\rm nr}(\bx)
=
\int_{\Bz_\n}
w^{\rm nr}(\balpha,\bx)\,\d\balpha,
\]
and
\[
\lim_{\epsilon\to0^+}
u_{j,\epsilon}^{\rm reg}(\bx)
=
\int_{\Bz_\n}
w_j^{\rm res}(\balpha,\bx)\,\d\balpha.
\]

\end{theorem}

\begin{proof}
Both limits follow from the same argument.
By Proposition~\ref{prop:non-resonant-lap}, the integrands
are uniformly bounded on \(\Bz_\n\).
Since \(\Bz_\n\) is bounded, the dominated convergence theorem
permits the exchange of limit and integration.
This proves both assertions.

\end{proof}

The limiting non-resonant and regular resonant
contributions naturally combine into the evanescent part
of the solution.

\begin{definition}[Evanescent contribution]
\label{def:evan}
The evanescent contribution is given by
\[
u^{\rm evan}(\bx)
:=
\int_{\Bz_\n}
w^{\rm nr}(\balpha,\bx)\,\d\balpha
+
\sum_{j\in J(\lambda)}
\int_{\Bz_\n}
w_j^{\rm res}(\balpha,\bx)\,\d\balpha.
\]
Since
\[
w(\balpha,\bx)
=
w^{\rm nr}(\balpha,\bx)
+
\sum_{j\in J(\lambda)}
w_j^{\rm res}(\balpha,\bx),
\]
we equivalently have
\[
u^{\rm evan}(\bx)
=
\int_{\Bz_\n}
w(\balpha,\bx)\,\d\balpha.
\]

\end{definition}

Before passing to the limit in the pole contribution, we
first verify that the limiting surface integral is well
defined.

\begin{lemma}[Well-definedness of the limiting pole integral]
\label{lem:pole_limit_welldefined}

The  integral is well defined:
\[
\int_{\bF_j^+\cup\bF_{j,c}^+}
\frac{
\hat f_j(\balpha)\,
\phi_j(\balpha,\bx)
}
{
\partial_s\mu_j(\balpha)\,
J(\balpha)
}
\,\d S(\balpha).
\]

\end{lemma}
 
\begin{proof}
By Lemma~\ref{lem:est_denominator},
\[
|\partial_s\mu_j(\balpha)|J(\balpha)
\ge
|\nabla\mu_j(\balpha)|
\ge c>0 .
\]
Hence
\[
\left|
\frac{
\hat f_j(\balpha)\phi_j(\balpha,\bx)
}
{
\partial_s\mu_j(\balpha)J(\balpha)
}
\right|
\le
\frac{
|\hat f_j(\balpha)\phi_j(\balpha,\bx)|
}
{c}.
\]
Since
\(
\hat f_j(\balpha)\phi_j(\balpha,\bx)
\)
is continuous on
\(
\bF_j^+\cup\bF_{j,c}^+
\),
the integrand is bounded.

Moreover,
\(
\bF_j^+\cup\bF_{j,c}^+
\)
is a finite union of parametrized compact sheets and therefore has finite surface measure. Consequently the integral is finite.

\end{proof}

With the well-definedness established, we may now pass to
the limit in the pole contribution.

\begin{theorem}[Limit of the pole contribution]
\label{thm:pole_lap}

One has
\[
\lim_{\epsilon\to0^+}
u_{j,\epsilon}^{\rm pole}(\bx)
=
u_j^{\rm pole}(\bx),
\]
where
\[
u_j^{\rm pole}(\bx)
:=
2\pi \i
\int_{\bF_j^+\cup\bF_{j,c}^+}
\frac{
\hat f_j(\balpha)\,
\phi_j(\balpha,\bx)
}
{
\partial_s\mu_j(\balpha)\,
J(\balpha)
}
\,\d S(\balpha).
\]

\end{theorem}

\begin{proof}

Using the finite parametrization of
\(
\Sigma^+_j(\epsilon)
\),
we may write
\[
u_{j,\epsilon}^{\rm pole}(\bx)
=
2\pi\i
\sum_{m=1}^{K_j}
\sum_{\ell=1}^{N_{j,m}}
\int_{V_{j,m,\ell}}
\frac{
\hat f_j(\Phi_{\ell,\epsilon}(\bgamma))
\phi_j(\Phi_{\ell,\epsilon}(\bgamma),\bx)
}
{
\partial_s\mu_j(\Phi_{\ell,\epsilon}(\bgamma))
}
\,\d\bgamma .
\]

Since
\(
\Phi_{\ell,\epsilon}(\bgamma)
\to
\Phi_{\ell,0}(\bgamma)
\)
for almost every
\(
\bgamma\in V_\ell
\),
the integrands converge pointwise almost everywhere to
\[
\frac{
\hat f_j(\Phi_{\ell,0}(\bgamma))
\phi_j(\Phi_{\ell,0}(\bgamma),\bx)
}
{
\partial_s\mu_j(\Phi_{\ell,0}(\bgamma))
}.
\]
By Lemma~\ref{lem:pole_limit_welldefined}, it is integrable on
\(
\bF_j^+\cup\bF_{j,c}^+
\).
Therefore the dominated convergence theorem completes the proof.
\end{proof}

Collecting the previous results, the limiting absorption
solution admits the decomposition
\begin{eqnarray}
\label{eq:evan}
u^{\rm evan}(\bx)
&=&
\int_{\Bz_\n}
w(\balpha,\bx)\,\d\balpha,
\\
\label{eq:pole}
u^{\rm pole}(\bx)
&=&
2\pi i
\sum_{j\in J(\lambda)}
\int_{\bF_j^+\cup\bF_{j,c}^+}
\frac{
\hat f_j(\balpha)\,
\phi_j(\balpha,\bx)
}
{
\partial_s\mu_j(\balpha)\,
J(\balpha)
}
\,\d S(\balpha).
\end{eqnarray}

\begin{theorem}[The limiting absorption solution]
\label{thm:LAP_final}

Assume Assumptions~\ref{asp1}--\ref{asp3}.
Then
\[
u_\epsilon \to u
\qquad
\text{in }
H^1_{\rm loc}(\R^d),\quad
\epsilon\to0^+.
\]
Moreover,
\[
u
=
u^{\rm evan}
+
u^{\rm pole},
\]
where \(u^{\rm evan}\) and \(u^{\rm pole}\) are given by
\eqref{eq:evan} and \eqref{eq:pole}, respectively.

For every compact set \(K\subset\R^d\),
there exists a constant \(C(K)>0\) such that
\[
\|u^{\rm evan}\|_{H^1(K)}
+
\|u^{\rm pole}\|_{H^1(K)}
\le
C(K)\,
\|f\|_{L^2(\R^d)}.
\]

\end{theorem}

\begin{proof}

By Theorem~\ref{thm:regular_limit},
Definition~\ref{def:evan}, and
Theorem~\ref{thm:pole_lap},
the limiting solution admits the decomposition
\[
u
=
u^{\rm evan}
+
u^{\rm pole}.
\]

The convergence
\[
u_\epsilon\to u
\qquad
\text{in }
H^1_{\rm loc}(\R^d)
\]
follows by combining the convergence of the evanescent
and pole contributions. Since both limits are obtained by
dominated convergence together with the corresponding
uniform local \(H^1\)-bounds, the same argument applies to
the first-order derivatives with respect to \(\bx\).

Finally, the asserted local \(H^1\)-estimate follows by
combining the corresponding estimates for
\(u^{\rm evan}\) and \(u^{\rm pole}\).
This completes the proof.

\end{proof}

\subsection{Asymptotic decomposition of the LAP solution}
\label{sec:asym_decomp}

To analyze the far-field behavior of the limiting
absorption solution, we further decompose the pole
contribution into a non-grazing part and a grazing part. Recall
from Section~\ref{sec:par_F} that, for each \(j\in J(\lambda)\), the
grazing set \(G_{j,\n}\subset\bF_j^+\) is covered by finitely many
grazing charts, while the remaining charts stay uniformly separated from
\(G_{j,\n}\).

Using this partition of unity, we decompose the pole
contribution into its non-grazing and grazing parts.
Choose a cutoff function
\[
\chi_{j,\n}^{\rm gra}\in C^\infty\left(\overline{\bF_j^+}\right),
\qquad
0\le \chi_{j,\n}^{\rm gra}\le 1,
\]
such that
\[
\chi_{j,\n}^{\rm gra}\equiv1
\quad\text{on a neighborhood of }G_{j,\n},
\]
and the support of $\chi_{j,\n}^{\rm gra}$ is contained in the union of the grazing charts on \(\bF_j^+\).
Set
\[
\chi_{j,\n}^{\rm ng}:=1-\chi_{j,\n}^{\rm gra}
\qquad\text{on }\bF_j^+.
\]

\begin{definition}[Non-grazing and grazing contributions]
\label{def:ng_gra}
For each \(j\in J(\lambda)\), define
\begin{equation}
\label{eq:def_ngj}
u_j^{\rm ng}(\bx)
:=
2\pi \i
\int_{\bF_j^+}
\chi_{j,\n}^{\rm ng}(\balpha)
\frac{
\hat f_j(\balpha)\,
\phi_j(\balpha,\bx)
}{
\partial_s\mu_j(\balpha)\,
J(\balpha)
}
\,\d S(\balpha),
\end{equation}
and
\begin{equation}
\label{eq:def_graj}
u_j^{\rm gra}(\bx)
:=
2\pi \i
\int_{\bF_j^+\cup \bF_{j,c}^+}
\chi_{j,\n}^{\rm gra}(\balpha)
\frac{
\hat f_j(\balpha)\,
\phi_j(\balpha,\bx)
}{
 \partial_s\mu_j(\balpha)\,
J(\balpha)
}
\,\d S(\balpha).
\end{equation}
Summing over \(j\in J(\lambda)\), we set
\begin{equation}
\label{eq:def_ng_gra}
u^{\rm ng}(\bx):=\sum_{j\in J(\lambda)}u_j^{\rm ng}(\bx),
\qquad
u^{\rm gra}(\bx):=\sum_{j\in J(\lambda)}u_j^{\rm gra}(\bx).
\end{equation}
\end{definition}

By construction,
\[
u^{\rm pole}
=
u^{\rm ng}
+
u^{\rm gra},
\]
so that the limiting absorption solution admits the
decomposition
\[
u
=
u^{\rm evan}
+
u^{\rm ng}
+
u^{\rm gra}.
\]

This decomposition separates three geometrically distinct radiation mechanisms and provides the foundation for the remainder of the paper.

We first estimate the evanescent contribution.
Since it is represented by the fiber integral
\eqref{eq:evan}, the exponential decay established in
Theorem~\ref{thm:admissible-fiber}
immediately yields the following result.

\begin{theorem}[Exponential decay of the evanescent contribution]
\label{thm:decay_evan}
There are constants $\sigma_0,C>0$ such that for all $\bx=R\n$ with $R>>1$, 
\[
\left|u^{\rm evan}(\bx)\right|\leq C e^{-\sigma_0 R}.
\]
\end{theorem}

\begin{proof}

By Theorem~\ref{thm:admissible-fiber},
\[
\sigma(\balpha)\ge \sigma_0>0,
\qquad
\forall \balpha\in\Bz_\n.
\]
Hence
\[
|w(\balpha,R\n)|
\le
C e^{-R\sigma(\balpha)}
\le
C e^{-\sigma_0R}.
\]

Using the representation
\[
u^{\rm evan}(R\n)
=
\int_{\Bz_\n}
w(\balpha,R\n)\,\d\balpha,
\]
we obtain
\[
|u^{\rm evan}(R\n)|
\le
C e^{-\sigma_0R}
|\Bz_\n|.
\]
Since the Brillouin zone has finite measure,
the desired estimate follows.

\end{proof}

The exponentially decaying evanescent contribution is
therefore negligible in the far field. 
Consequently, the leading asymptotic behavior of the
limiting absorption solution is determined entirely by the
non-grazing and grazing contributions.
These two terms will be analyzed separately in
Sections~\ref{sec:asym_non_gra} and \ref{sec:decay_gra}, respectively.

\section{Classical asymptotics of the non-grazing component}
\label{sec:asym_non_gra}

In the previous section, the limiting absorption solution
was decomposed into an evanescent contribution, a
non-grazing contribution, and a grazing contribution. The
evanescent part decays exponentially and therefore does
not contribute to the leading far-field behavior. It
remains to analyze the non-grazing and grazing
contributions.

The purpose of the present section is not to develop a
complete asymptotic theory for arbitrary Fermi surfaces.
Rather, we use the SSA decomposition to identify the
generic asymptotic behavior associated with each
component. In this section, we show how the SSA decomposition reduces
the non-grazing contribution to finitely many localized
oscillatory integrals, thereby recovering the generic
classical stationary phase asymptotics.

\subsection{Local decomposition of the non-grazing contribution}
\label{sec:ng_local}

Combining the local parametrization of the outgoing Fermi sheets
established in Section~\ref{sec:cift} with a finite partition of
unity reduces the non-grazing contribution to finitely many localized
oscillatory integrals.
It therefore suffices to analyze a model oscillatory integral on a
bounded domain in $\mathbb{R}^{d-1}$.

For each observation direction $\n\in\mathbb S^{d-1}$, define
\[
\mathcal C^{\rm ng}(\n)
:=
\left\{
\balpha\in\bF^+:\,
\nu(\balpha)=\n
\right\},
\]
namely, the set of points on the outgoing Fermi surface whose outward
unit normal coincides with the observation direction.
Under Assumption~\ref{asp3},
\(\mathcal C^{\rm ng}(\n)\)
is discrete.
Since
\(\operatorname{supp}(\chi_\n^{\rm ng})\)
is compact,
only finitely many critical points contribute to the
localized stationary phase analysis.

From Definition~\ref{def:ng_gra} and $\bF_j^+=\bigcup_{m=1}^{K_j}\bL_{j,m}^+$, therefore
\[
u_j^{\rm ng}(\bx)
=
2\pi \i
\sum_{m=1}^{K_j}
\int_{\bL_{j,m}^+}
\chi_{j,\n}^{\rm ng}(\balpha)
\frac{
\hat f_j(\balpha)\,
\phi_j(\balpha,\bx)
}{
\partial_s\mu_j(\balpha)\,
J(\balpha)
}
\,\d S(\balpha).
\]
Since  ${\rm supp}\left(\chi_{j,\n}^{\rm ng}\right)\cap G_{j,\n}=\emptyset\), there exists a  \(c_0>0\) such that  for all $\balpha\in {\rm supp}(\chi_{j,\n}^{\rm ng})$, $ \partial_s\mu_j(\balpha)\ge c_0$. Therefore the integral is always well defined on the support of $\chi_{j,\n}^{\rm ng}$. We next localize the integral near the support of \(\chi_\n^{\rm ng}\). Define the compact set
 \[ K_\n := {\rm supp}(\chi_\n^{\rm ng})\cap\bL^+_{j,m}. \] 
To localize the stationary phase analysis, we construct a finite coordinate covering of \(K_\n\).

From Section~\ref{sec:cift}, for every \(\balpha\in K_\n\), there exist an open neighborhood \(U_\balpha\subset\bL^+_{j,m}\), a parameter domain \(V_\balpha\subset\R^{d-1}\), and a local parametrization
\[ \kappa:V_\balpha\longrightarrow U_\balpha, \qquad \bgamma\longmapsto \bgamma\t+s(\bgamma,0)\n, \]
 such that \(U_\balpha=\kappa(V_\balpha)\). If \(\balpha\in\mathcal C^{\rm ng}(\n)\), we further shrink \(U_\balpha\) so that 
 $ U_\balpha\cap\mathcal C^{\rm ng}(\n) = \{\balpha\}$. 
 By compactness, there exists a finite subcover 
 \[ K_\n \subset \bigcup_{q=1}^Q U_q, \]
  with the following properties: 
\begin{itemize}
\item for \(q=1,\dots,P\), the neighborhood \(U_q\) contains exactly one
point \(\balpha_q^*\in\mathcal C^{\rm ng}(\n)\). Moreover,
\[
\{\balpha_q^*:q=1,\dots,P\}
=
\mathcal C^{\rm ng}(\n)\cap\bL^+;
\]

\item for \(q=P+1,\dots,Q\), the neighborhood \(U_q\) contains no point
of \(\mathcal C^{\rm ng}(\n)\).
\end{itemize}
Denote the related parameter domain and local parameterization as $V_q$ and $\kappa_q$, respectively. Thus only finitely many local coordinate charts are
required to describe the non-grazing contribution.

Using the finite covering constructed above, we introduce a partition of unity:
\[
1=\sum_{q=1}^Q \eta_q
\qquad\text{on }K_\n,
\]
subordinate to the cover \(\{U_q\}_{q=1}^Q\). For each \(q\), let
\[
\widetilde\eta_q:=\eta_q\circ\kappa_q\in C_0^\infty(V_q).
\]
Consequently,
\[
u^{\rm ng}(R\n)=2\pi \i\sum_{q=1}^Q I_q(R\n),
\]
where
\[
I_q(R\n)
=
\int_{V_q}
\widetilde\eta_q(\bgamma)\,
\chi_\n^{\rm ng}(\kappa_q(\bgamma))
\frac{
\hat f(\kappa_q(\bgamma))\,
\psi(\kappa_q(\bgamma),R\n)
}{
\partial_s\mu(\kappa_q(\bgamma))
}
e^{iR\n\cdot\kappa_q(\bgamma)}
\,\d\bgamma,
\]
so that the global asymptotic analysis is reduced to the
study of finitely many localized oscillatory integrals.

Since only finitely many localized integrals occur, it
suffices to analyze one representative integral.
Suppressing the index \(q\), we write
\begin{equation}
\label{eq:ngr_I}
I(R\n)=\int_V B(\bgamma,R\n)e^{iR\Phi_\n(\bgamma)}\,\d\bgamma,
\end{equation}
where the phase function is
\begin{equation}
\label{eq:ngr_phi}
\Phi_\n(\bgamma):=\n\cdot\kappa(\bgamma),
\end{equation}
and the amplitude is given by
\begin{equation}
\label{eq:ngr_B}
B(\bgamma,R\n)
:=
\widetilde{\eta}(\bgamma)\,
\chi_\n^{\rm ng}(\kappa(\bgamma))
\frac{
\hat f(\kappa(\bgamma))\,
\psi(\kappa(\bgamma),R\n)
}{
\partial_s\mu(\kappa(\bgamma))
}.
\end{equation}
By abuse of notation, we also regard \(B\) as a function on the
corresponding local patch of the Fermi surface via the parametrization
\(\kappa\), namely,
\[
B(\balpha,R\n)
:=
B(\bgamma,R\n)\quad\text{ where }\balpha=\kappa(\bgamma).
\]

Hence the asymptotic analysis of the non-grazing
contribution is reduced to the model oscillatory integral
\eqref{eq:ngr_I}. This reduction is precisely the benefit of the SSA decomposition. By organizing the Floquet--Bloch representation slice by slice and localizing the outgoing Fermi surface into finitely many charts, the multidimensional far-field analysis is transformed into finitely many oscillatory integrals on bounded parameter domains, to which the classical stationary phase theorem applies directly. The remainder of this section is devoted
to the analysis of this integral.

\subsection{Generic non-grazing asymptotics}
\label{sec:ng_generic}

The localized representation reduces the asymptotic
analysis to the oscillatory integral \eqref{eq:ngr_I}. In
this section, we show that, under a generic non-grazing
assumption, its critical points are non-degenerate and
therefore the classical stationary phase theorem applies
directly.

\begin{lemma}[Geometric characterization of critical points]
\label{lem:critical}
Let \(\bgamma_*\in V\), and set $
\balpha_*:=\kappa(\bgamma_*).$
Then
\[
\nabla_\bgamma \Phi_\n(\bgamma_*)=0
\quad\Leftrightarrow\quad
\n=\nu(\balpha_*)
=
\frac{\nabla\mu(\balpha_*)}{|\nabla\mu(\balpha_*)|}.
\]
Equivalently, the critical points of \(\Phi_\n\) are precisely the
points on the local Fermi sheet whose unit normal coincides with the
observation direction \(\n\).
\end{lemma}

\begin{proof}
By definition of $\Phi_n$ in \eqref{eq:ngr_phi},
\[
\partial_{\bgamma_k}\Phi_\n(\bgamma)
=
\n\cdot\partial_{\bgamma_k}\kappa(\bgamma)=0,
\qquad
k=1,\ldots,d-1.
\]
Since \(\kappa\) is a local parametrization of the Fermi sheet, the tangent space is represented by
\[T_{\balpha_*}\bL^+_{j,m}:={\rm span}
\bigl\{
\partial_{\bgamma_1}\kappa(\bgamma_*),
\ldots,
\partial_{\bgamma_{d-1}}\kappa(\bgamma_*)
\bigr\},
\]
which is equivalent to
$
\n\perp T_{\balpha_*}\bL^+_{j,m}.
$
By the definition of the unit normal vector and the choice of $\bF^+$ which implies $\n\cdot\nu(\balpha_*)>0$, this is equivalent to
$
\n=\nu(\balpha_*).
$
\end{proof}

According to Lemma~\ref{lem:critical}, the asymptotic
behavior of \eqref{eq:ngr_I} is determined by the critical
points of the phase. Throughout the remainder of this section, we assume that
all critical points lie in the interior of the analytic
branch, namely,
\[
\mathcal C^{\rm ng}(\n)
\cap
\partial\bL^+
=
\emptyset.
\]
Under this generic assumption, the only remaining
requirement for applying the classical stationary phase
theorem is the non-degeneracy of the critical points.

\begin{lemma}[Hessian and the second fundamental form]
\label{lem:nondeg}
Let $\bgamma_*\in\mathcal{C}^{\rm ng}(\n)$ be a critical point and
$\balpha_*=\kappa(\bgamma_*)$. Then
\[
D_\bgamma^2\Phi_\n(\bgamma_*)
=
\mathrm{II}_{\balpha_*},
\]
where the second fundamental form is written in the local coordinates
$\bgamma$. Consequently, under Assumption~\ref{asp3}, every critical
point of $\Phi_\n$ is non-degenerate.
\end{lemma}

\begin{proof}
Let $\balpha_*=\kappa(\bgamma_*)$. By Lemma~\ref{lem:critical},
$
\n=\nu(\balpha_*).
$
From the definition of $\Phi_\n(\bgamma)$, 
we have
\[
\partial_{\bgamma_i\bgamma_j}^2\Phi_\n(\bgamma)
=
\n\cdot
\partial_{\bgamma_i\bgamma_j}^2\kappa(\bgamma).
\]
Let $
\tau_i(\bgamma):=\partial_{\bgamma_i}\kappa(\bgamma)
$
be a tangential vector to the Fermi surface for any fixed
$\bgamma\in V$. Hence
\[
\nu(\kappa(\bgamma))\cdot\tau_i(\bgamma)=0,
\qquad \bgamma\in V.
\]
Differentiating with respect to $\bgamma_j$ yields
\[
(\partial_{\bgamma_j}\nu)\cdot\tau_i
+
\nu\cdot
\partial_{\bgamma_i\bgamma_j}^2\kappa
=
0.
\]
Evaluating at $\bgamma=\bgamma_*$ and using
$\n=\nu(\balpha_*)$, we obtain
\[
\partial_{\bgamma_i\bgamma_j}^2\Phi_\n(\bgamma_*)
=
-(\partial_{\bgamma_j}\nu)\cdot\tau_i(\bgamma_*)=\mathrm{II}_{\balpha_*}(\tau_i,\tau_j).
\]
Therefore
\[
D_\bgamma^2\Phi_\n(\bgamma_*)
=
\mathrm{II}_{\balpha_*}.
\]
Assumption~\ref{asp3} implies that
\(
\det\mathrm{II}_{\balpha_*}\neq0
\),
so every critical point is non-degenerate.
\end{proof}

\begin{theorem}[Generic non-grazing asymptotics via SSA]
\label{thm:generic_ng}

Assume Assumption \ref{asp1}-\ref{asp3}, and
$
\mathcal C^{\rm ng}(\n)
\cap
\partial\bL^{+}
=
\emptyset.
$
Then the localized oscillatory integral \eqref{eq:ngr_I}
admits the asymptotic expansion
\[
I(R\n)
=
(2\pi)^{\frac{d-1}{2}}
R^{-\frac{d-1}{2}}
\sum_{\balpha_*\in\mathcal C^{\rm ng}(\n)}
\frac{
B(\balpha_*,R\n)
e^{iR\Phi(\balpha_*)}
e^{i\pi\sigma_*/4}
}
{\sqrt{|\det {\rm II}_{\balpha_*}|}}
+
O\!\left(R^{-\frac{d+1}{2}}\right),\quad R\rightarrow\infty.
\]
If no critical point is present, the stationary phase sum
is empty and the estimate reduces to the remainder term.
\end{theorem}

\begin{proof}For each
\(\balpha_*\in\mathcal C^{\rm ng}(\n)\),
let
\(\bgamma_*\)
denote its local coordinate. 
By Lemma~\ref{lem:critical}, the critical points of $\Phi_\n$ are
precisely the points of $\mathcal C^{\rm ng}(\n)$. The assumption
$
\mathcal C^{\rm ng}(\n)\cap\partial\bL^+=\emptyset
$
ensures that all critical points are interior, while
Lemma~\ref{lem:nondeg} shows that they are non-degenerate. The stated
expansion therefore follows directly from the classical stationary phase
theorem. 
\end{proof}

\begin{remark}
The generic assumption
\[
\mathcal C^{\rm ng}(\n)
\cap
\partial\bL^+
=
\emptyset
\]
excludes the situation where a stationary point reaches
the natural boundary of an analytic branch. In that case,
the local model becomes an oscillatory integral on a
domain with boundary, whose asymptotic behavior is
governed by the stationary phase theory for domains with
boundary.

The purpose of the present section is to identify the
generic asymptotic mechanism associated with the
non-grazing contribution. A complete treatment of
boundary stationary points depends on the geometry of the
natural boundaries separating adjacent analytic branches
of the Fermi surface and is therefore beyond the scope of
the present work.
\end{remark}

\section{Asymptotic analysis of the grazing component}
\label{sec:decay_gra}

In this section, we investigate the asymptotic behavior of
the grazing contribution defined in
\eqref{eq:def_graj}. Applying the localization procedure
developed in Section~\ref{sec:ng_local}, the grazing
contribution is decomposed into finitely many localized
oscillatory integrals associated with the local
parametrizations of the analytic branches. It therefore
suffices to consider a single localized contribution.
 Suppressing all branch indices, we
write
\[
I(R\n)
=
I^+(R\n)+I^-(R\n),
\]
where
\[
I^\pm(R\n)
=
\int_{V^\pm}
\chi_\n^{\rm gra}(\kappa^\pm(\bgamma))
\frac{
\hat f(\kappa^\pm(\bgamma))
\psi(\kappa^\pm(\bgamma),R\n)
}{
\partial_s\mu(\kappa^\pm(\bgamma))
}
e^{iR\n\cdot\kappa_q(\bgamma)}
\,\d\bgamma .
\]
The asymptotic behavior of \(u^{\rm gra}\) is therefore obtained by
summing the asymptotic expansions of these localized pairs.

Throughout this section, we write
\(
\balpha=\kappa^\pm(\bgamma)
\)
for the local parametrizations of the grazing branches.
Unlike the non-grazing case, these parametrizations
possess square-root singularities near the grazing set.
Consequently, the asymptotic analysis cannot be carried
out directly by the classical stationary phase method. The main task of this section is therefore to resolve the
square-root singularities introduced by the grazing
geometry and reduce the asymptotic analysis to a regular
oscillatory integral on the grazing manifold.

\subsection{Singularity analysis}
\label{sec:singular_grazing}

We fix one grazing chart associated with a connected
component of \(G_\n\). Since the analyses of the two local
branches are identical, we suppress the superscript
\(+\) or \(-\) whenever no confusion can arise.
The corresponding localized oscillatory integral takes the
form
\[
I(R\n)
=
\int_{V}
\frac{
A(\bgamma,R\n)
}
{
\partial_s\mu
\bigl(
\balpha(\bgamma)
\bigr)
}
e^{\i R\Phi_\n(\bgamma)}
\,\d\bgamma,
\]
where the phase function $\Phi_\n$ is defined in \eqref{eq:ngr_phi} and
\[
A(\bgamma,R\n)=\chi_\n^{\rm gra}(\kappa^\pm(\bgamma))
\hat f(\kappa^\pm(\bgamma))
\psi(\kappa^\pm(\bgamma),R\n).
\]
Unlike the non-grazing case, the local parametrizations
develop square-root singularities near the grazing set.
The first task is therefore to identify the singular
behavior of the phase, the residue factor, and the
amplitude.

By construction, if $V=V^+\subset\overline{\bF^+_j}$, $A$ contains the localization cutoff function $\mathcal{X}$ and therefore
vanishes near $\partial V\setminus G_\n$. If $V=V^-$ then $\kappa(\bgamma)$ lies on the complex extension of the Fermi surface and  $A$ no longer vanishes on $\partial V\setminus G_\n$. Instead, the decay of $e^{\i R\Phi_\n(\bgamma)}$ needs to be investigated. 

By Theorem~\ref{th:grazing-branch}, the local parameterizationnear $G_\n$ has the following form:
\begin{equation}
\label{eq:par_branch}
\kappa(\bgamma)=\kappa_G(\bgamma')+r \t_1+\sqrt{\eta(\bgamma')r+i0}\,\n
+
O(r)
\end{equation}
where
\begin{equation}
\label{eq:par_branch_G}
\kappa_G(\bgamma')=
\psi(\bgamma')\t_1
+\sum_{j=2}^{d-1}\bgamma_j\t_j
+\phi(\bgamma')
\n
\end{equation}
is the local paramterization of a neighbourhood in $G_\n$,
\[
r=\bgamma_1-\psi(\bgamma'),
\qquad
\eta(\bgamma')
=
-\frac{2b(\bgamma')}{a(\bgamma')}.
\]
Recall that the coefficient functions
\(a(\bgamma')\)
and
\(b(\bgamma')\) defined on $G_\n$
were introduced in Section~\ref{sec:dift}. Under Assumption~\ref{asp3},
\[
a(\bgamma')\neq0,
\qquad
b(\bgamma')\neq0.
\]

We first show that the complex branch contributes only in
an arbitrarily small neighborhood of the grazing set.

\begin{lemma}[Exponential localization]
\label{lem:exp-localization}

Let $
\kappa(\bgamma)
$ 
be the complex branch given by \eqref{eq:par_branch}. Then there exist
\(\delta>0\) such that, for every
\(0<\delta_0<\delta\),
\[
\left|
e^{iR\kappa(\bgamma)\cdot\n}
\right|
\le
e^{-c_{\delta_0}R},
\quad\forall\,
\delta_0
\le
\left|
\gamma_1-\psi(\bgamma')
\right|
\le
\delta
\]
on the complex branch, where \(c_{\delta_0}>0\) is independent of
\(\bgamma\).

\end{lemma}

\begin{proof}
Let $s(\bgamma):=\sqrt{\eta(\bgamma)+\i 0}+O(r)$. For every fixed \(0<\delta_0<\delta\), $
\Im s_+(\bgamma)
$
is  continuous and positive on the compact complex branch where $
\delta_0
\le
\left|
\gamma_1-\psi(\bgamma')
\right|
\le
\delta
$
Therefore,
\[
\Im s_+(\bgamma)\ge c_{\delta_0}>0,
\]
uniformly in \(\bgamma'\). Since
$
\kappa(\bgamma)\cdot\n=s(\bgamma),
$
we obtain
\[
\left|
e^{iR\kappa(\bgamma)\cdot\n}
\right|
\le
e^{-c_{\delta_0}R},
\]
which immediately yields the desired estimate.

\end{proof}

It therefore remains to identify the local singular
structure near the grazing set. The next two lemmas
describe the singular behavior of the residue factor and
the corresponding phase-amplitude expansion.

\begin{lemma}[Square-root singularity of the residue factor]
\label{lem:sqrt-singularity}
From the definitions above,
\[
\partial_s\mu
\bigl(\kappa(\bgamma)
\bigr)
=a(\bgamma')
\sqrt{
\eta(\bgamma')\,r+i0
}
+
O(r).
\]

\end{lemma}

\begin{proof}
Since \(\kappa_G(\bgamma)\in G_\n\),
\[
\partial_s\mu(\kappa_G(\bgamma))
=
\nabla\mu(\kappa_G(\bgamma))\cdot\n
=
0.
\]
Therefore, Taylor's formula gives
\[
\partial_s\mu(\kappa(\bgamma))
=
D^2\mu(\kappa_G(\bgamma))
\bigl[\n,\kappa(\bgamma)-\kappa_G(\bgamma)\bigr]
+
O\!\left(
|\kappa(\bgamma)-\kappa_G(\bgamma)|^2
\right).
\]
Using
\[
\kappa(\bgamma)-\kappa_G(\bgamma)
=
r\t_1
+
\sqrt{\eta(\bgamma')\,r+i0}\,\n
+
O(r),
\]
together with the definitions
\[
a(\bgamma')
=
D^2\mu(\kappa_G(\bgamma))[\n,\n].
\]
Since the tangential contribution is of order \(r\) we prove this lemma.
\end{proof}

\begin{lemma}[Local phase and amplitude expansions]
\label{lem:local-expansion}

Under the local parametrization
\[
\kappa(\bgamma)
=
\kappa_G(\bgamma')
+r\t_1
+\sqrt{\eta(\bgamma')\,r+i0}\,\n
+O(r),
\]
the amplitude and phase admit the expansions
\[
A(r,\bgamma',R\n)
=
A_0(\bgamma',R\n)
+
A_1(\bgamma',R\n)
\sqrt{\eta(\bgamma')\,r+i0}
+
O(r),
\]
and
\[
\Phi_\n(r,\bgamma')
=
\Phi_0(\bgamma')
+
\sqrt{\eta(\bgamma')\,r+i0}
+
O(r),
\]
where
\[
\Phi_0(\bgamma')
:=
\n\cdot\kappa_G(\bgamma'),
\]
and \(A_0\), \(A_1\) are smooth coefficient functions of
\((\bgamma',R\n)\), with
\[
A_0(\bgamma',R\n)
=
A\bigl(\kappa_G(\bgamma'),R\n\bigr).
\]

\end{lemma}

\begin{proof}
Since \(A(\balpha,R\n)\) is analytic on each local analytic branch,
Taylor's theorem together with the local parametrization of
\(\kappa(\bgamma)\) yields the stated expansion of \(A\).

Moreover, $
\Phi_\n(\bgamma)
=
\n\cdot\kappa(\bgamma)$
and the expansion of \(\Phi_\n\) follows immediately from the local
parametrization and the identity
$
\n\cdot\t_1=0.
$
\end{proof}

The preceding lemmas identify the local structure of the
grazing oscillatory integral. In particular, the
square-root singularity is completely described by the
transversal variable \(r\), while the remaining variables
parameterize the grazing manifold. This decomposition will
allow us, in the next subsection, to perform the
transversal integration and reduce the grazing asymptotic
analysis to a regular oscillatory integral on \(G_\n\).

\subsection{Generic grazing geometry}
\label{sec:gra_geo}

We now reduce the localized grazing oscillatory integral
to an oscillatory integral on the grazing manifold.
By the localization on the real branch together with
Lemma~\ref{lem:exp-localization} for the complex branch,
every localized contribution is supported in an
arbitrarily small one-sided neighborhood of the grazing
set. More precisely, after shrinking the
localization neighborhood if necessary, we may assume that the
transversal variable satisfies $
r\in I,$
where
$
I=[0,\delta_0]$ or $I=[-\delta_0,0]$,
depending on the branch under consideration, with
\(0<\delta_0<\delta\). For $r=0$, the point $\kappa(r,\bgamma')\in G_\n$.

The next lemma performs the transversal integration across
the grazing manifold and provides the key reduction for
the subsequent stationary phase analysis.

\begin{lemma}[Evaluation of the transversal integral]
\label{lem:transversal-reduction}

Assume that \(A((r,\bgamma'),R\n)\) satisfies the expansion in
Lemma~\ref{lem:local-expansion}. For sufficiently small $\delta_0>0$ then the following asymptotic analysis holds uniformly in $\bgamma'$:
\begin{equation}
\label{eq:gra_trans_1order}
K(R,\bgamma')
:=
\int_I
\frac{
A((r,\bgamma'),R\n)
}
{\partial_s\mu(\kappa(r,\bgamma'))}
e^{\i R\Phi_\n(r,\bgamma')}
\,\d r
=R^{-1} B(\bgamma',R\n)+O(R^{-2})
\end{equation}
where
\[B(\bgamma',R\n)=
\frac{A(\kappa_G(\bgamma'),R\n)}
{\i  b(\bgamma')}e^{\i R\Phi_0(\bgamma')}.
\]
\end{lemma}

\begin{proof}
It suffices to consider the case
$
I=[0,\delta_0].
$
Indeed, if
\(
I=[-\delta_0,0],
\)
the change of variables
\(
r\mapsto-r
\)
reduces the integral to the same form.

By Lemmas~\ref{lem:sqrt-singularity} and
\ref{lem:local-expansion}, let $r=\rho^2$,
\[
K(R,\gamma')
=
2
\int_0^{\sqrt{\delta_0}}
a(\rho,\gamma')
e^{\i R\phi(\rho,\gamma')}
\,\d\rho,
\]
where
\[
a(\rho,\bgamma')
=\frac{
A(\rho^2,\bgamma',R\n)}{\partial_s\mu(\kappa(\rho^2,\bgamma'))}\rho e^{\i R\Phi_0(\bgamma')}
=\frac{
A(\kappa_G(\bgamma'),R\n))}{a(\bgamma')\sqrt{\eta(\bgamma')}}e^{\i R\Phi_0(\bgamma')} +O(\rho)
\]
is a smooth function in $[0,\sqrt{\delta_0}]$, and
\[
\phi(\rho,\gamma')
=
\sqrt{\eta(\gamma')}\rho+O(\rho^2).
\]
Hence for sufficiently small \(\delta_0\),
\[
\partial_\rho\phi(\rho,\gamma')
\ge c_0>0.
\]

Integration by parts gives
\[
K(R,\bgamma')
=-
\frac{2 a(0,\bgamma')}
{\i R\sqrt{\eta(\gamma')}}
+
R_1(R,\gamma')
+
B(R,\gamma'),
\]
where \(R_1\) denotes the remaining oscillatory integral after one integration by parts, and \(B\) is the contribution from the upper endpoint $\sqrt{\delta_0}$. In the real case,
\(B(R,\gamma')=0\) by the cutoff function, while in the complex case,
Lemma~\ref{lem:exp-localization} implies
$
B(R,\gamma')
=
O(e^{-cR}).
$
A second integration by parts yields
$
R_1(R,\gamma')
=
O(R^{-2}),
$
uniformly in \(\gamma'\). This completes the proof.

\end{proof}

More generally, the integration-by-parts argument can be
iterated to obtain an asymptotic expansion of arbitrary
order. For every \(N\in\mathbb N\),
\begin{equation}
\label{eq:gra_trans_Norder}
K(R,\bgamma')
=
e^{iR\Phi_0(\bgamma')}R^{-1}
B_N(\bgamma',R\n)
+
O_N(R^{-N-1})
,
\end{equation}
where
\[
B_N(\bgamma',R\n)=
\sum_{m=0}^{N-1}
\frac{c_m(\bgamma',R\n)}{R^{m}}=B(\bgamma',R\n)+O(R^{-1})
\]
uniformly in \(\bgamma'\). 
This higher-order expansion will be used
implicitly in the proof of the main asymptotic theorem.

Similar to Section \ref{sec:ng_local}, by  abuse of notation, we still regard \(B\) and $B_N$ as functions on the grazing set $G_\n$ via the parametrization
\(\kappa_G\):
\[
 B(\balpha,R\n)
:=
B(\bgamma',R\n)\quad B_N(\balpha,R\n)
:=
B_N(\bgamma',R\n)\text{ where }\balpha=\kappa_G(\bgamma').
\]

Let \(V'\) denote the coordinate domain of the local parametrization
$
\kappa_G:V'\to G_\n
$
is defined as in \eqref{eq:par_branch_G}.
The localized grazing contribution can be written as
\[
I(R\n)
=
\int_{V'}
K(R,\bgamma')
\,\d\bgamma',
\]
where
\[
K(R,\bgamma')
:=
\int_I
\frac{
A((r,\bgamma'),R\n)
}
{
\partial_s\mu\bigl(\kappa(r,\bgamma')\bigr)
}
e^{iR\Phi_\n(r,\bgamma')}
\,\d r .
\]
Consequently, the grazing contribution is reduced to an
oscillatory integral over the grazing manifold
\(G_\n\),
\[
I(R\n)
=
R^{-1}
\int_{V'}
B_N(\bgamma',R\n)
e^{iR\Phi_0(\bgamma')}
\,\d\bgamma'
+
O(R^{-N-1}),
\]
whose phase is the restriction of the original phase to
the grazing manifold. From this point on, the grazing manifold becomes the natural geometric object governing the remaining asymptotic analysis. The remaining asymptotic analysis is therefore entirely
determined by the critical points of
\(\Phi_0\). Since
\[
\Phi_0(\bgamma')
=
\n\cdot\kappa_G(\bgamma'),
\]
we have
\[
\nabla_{\bgamma'}\Phi_0(\bgamma')=0
\quad\Longleftrightarrow\quad
\n\perp T_{\kappa_G(\bgamma')}G_\n.
\]

\begin{definition}[Grazing critical points]
\label{def:grazing-critical}

For each observation direction \(\n\in\mathbb S^{d-1}\), define
\[
\mathcal C^{\rm gra}(\n)
:=
\{\balpha_*\in G_\n:\n\perp T_{\balpha_*} G_\n\}.
\]
To identify the generic grazing asymptotics, we further
assume that
\(\Phi|_{G_\n}\)
is a Morse function.

\end{definition}

\begin{theorem}[Generic grazing asymptotics]
\label{thm:generic-grazing}

Assume that the restriction of \(\Phi\) to \(G_\n\) is a Morse
function. Then
\begin{equation}
\label{eq:gra_asym}
I(R\n)
=(2\pi)^{\frac{d-2}{2}}
R^{-d/2}
\sum_{\balpha_*\in\mathcal{C}^{\rm gra}(\n)}
\frac{
B(\balpha_*,R\n)
}{
\sqrt{
\left|
\det D^2(\Phi|_{G_\n})(\balpha_*)
\right|
}
}
e^{iR\Phi(\balpha_*)}
e^{i\pi\sigma_*/4}
+
O(R^{-d/2-1}),
\end{equation}
where
\[
\sigma_*
=
\operatorname{sgn}
D^2(\Phi|_{G_\n})(\balpha_*).
\]
\end{theorem}

\begin{proof}
By Lemma~\ref{lem:transversal-reduction}, the grazing
oscillatory integral is reduced to an oscillatory integral
on the grazing manifold, up to arbitrarily high order:
\[
I(R\n)
=
R^{-1}
\int_{V'}
B_N(\bgamma',R\n)
e^{iR\Phi_0(\bgamma')}
\,\d\bgamma'
+
O(R^{-N}),
\]
where \(B_N(\cdot,R\n)\) is smooth on \(V'\) and satisfies
\[
B_N(\bgamma',R\n)
=
B(\kappa_G(\bgamma'),R\n)
+
O(R^{-1}).
\]

Choose \(N>d/2\). For each \(\balpha_*\in\mathcal C^{\rm gra}(\n)\), let
\(\bgamma_*\in V'\) denote its local coordinate, i.e.,
\(\kappa_G(\bgamma_*)=\balpha_*\).
The stationary phase theorem then yields
\[
I(R\n)
=
R^{-d/2}
\sum_{\balpha_*\in \mathcal{C}^{\rm gra}(\n)}
(2\pi)^{\frac{d-2}{2}}
\frac{
B_N(\bgamma_*,R\n)
}{
\sqrt{
\left|
\det D^2\Phi_0(\bgamma_*)
\right|
}
}
e^{iR\Phi_0(\bgamma_*)}
e^{i\pi\sigma_*/4}
+
O(R^{-d/2-1}),
\]
where
\(
\kappa_G(\bgamma_*)=\balpha_*
\). 
Since
\[
B_N(\bgamma_*,R\n)
=
B(\balpha_*,R\n)
+
O(R^{-1}),
\quad
\Phi_0(\bgamma_*)
=
\Phi(\balpha_*),
\quad
D^2\Phi_0(\bgamma_*)
=
D^2(\Phi|_{G_\n})(\balpha_*).
\]
Substituting these identities into the preceding expansion proves
\eqref{eq:gra_asym}.
\end{proof}

\begin{remark}
The normal space of the grazing manifold satisfies
\[
N_\balpha G_\n
=
\operatorname{span}
\{\nabla\mu(\balpha),\,D^2\mu(\balpha)\n\}.
\]
Hence the grazing critical set is characterized entirely by the local
differential structure of the dispersion relation. In particular, the
Morse assumption is an intrinsic geometric condition on the grazing
manifold.
\end{remark}

\section{Examples and Comparisons}
\label{sec:geo_gra}

\subsection{The grazing contribution as an independent radiation mechanism}
\label{sec:grazing_dominates}

Sections~\ref{sec:asym_non_gra} and~\ref{sec:decay_gra} establish the
far-field asymptotic expansions of the propagating and grazing
contributions, respectively.
A natural question remains:

\begin{center}
\emph{Does the grazing component represent a genuinely new radiation mechanism, or is it merely a higher-order correction to the classical propagating radiation?}
\end{center}

We construct a local spectral model satisfying the assumptions of
Theorem~\ref{thm:generic-grazing} for which the propagating leading
terms vanish while the grazing contribution remains nonzero.
As a consequence, the grazing contribution becomes the leading term of
the far-field asymptotics.

Let $\balpha_0\in G_\n$ be a grazing critical point. After a translation and a rigid change of
coordinates, we may assume that
\[
\balpha_0=(0,0,0),
\qquad
\n=\e_1.
\]

Assume that, in a neighborhood of \(\balpha_0\), for $\kappa_1\kappa_2\neq 0$, the dispersion
relation is given by
\[
\mu(\balpha)
=
\alpha_2
-
\frac12
\left(
\kappa_1\alpha_1^2
+
\kappa_2\alpha_3^2
\right)
+
\alpha_1\alpha_3^2.
\]
Then the Fermi surface at energy \(\lambda=0\) is locally represented
by
\[
\bF
=
\left\{
\alpha_2
=
\frac12
\left(
\kappa_1\alpha_1^2
+
\kappa_2\alpha_3^2
\right)
-
\alpha_1\alpha_3^2
\right\}.
\]
The Fermi surface is nondegenerate since
\[
D^2f(0)
=
{\rm diag}(\kappa_1,\kappa_2).
\]

With the grazing condition
$
\nabla\mu(\balpha)\cdot\n=0
$
we obtain
\[
\alpha_1
=
\frac1{\kappa_1}\alpha_3^2,
\qquad
\alpha_2
=
\frac{\kappa_2}{2}\alpha_3^2
+
O(\alpha_3^4).
\]
Hence the grazing manifold admits the local parametrization
\[
G_\n(\alpha_3)
=
\left(
\frac1{\kappa_1}\alpha_3^2,\;
\frac{\kappa_2}{2}\alpha_3^2
+
O(\alpha_3^4),\;
\alpha_3
\right).
\]

Moreover,
\[
\Phi|_{G_\n}
=
\balpha\cdot\n
=
\frac1{\kappa_1}\alpha_3^2,
\]
so that
\[
\Phi(\balpha_0)=0,
\qquad
(\Phi|_{G_\n})'(0)=0,
\qquad
(\Phi|_{G_\n})''(0)
=
\frac2{\kappa_1}\neq0.
\]
Hence
\(\balpha_0\)
is a non-degenerate grazing critical point.

Applying
Theorem~\ref{thm:generic-grazing},
we obtain
\[
u^{\rm gra}(R\n,\bx)
=
C(\balpha_0,\bx)
R^{-3/2}
+
o(R^{-3/2}),
\]
where
\[
C(\balpha_0,\bx)
=
(2\pi)^{-1/2}
e^{i\pi\operatorname{sgn}(\kappa_1)/4}
\frac{
\hat f_j(\balpha_0)\psi_j(\balpha_0,\bx)
}
{\sqrt{|1/\kappa_1|}}.
\]
Let
\[
\mathcal C^{\rm ng}(\n)
=
\{\balpha_1,\ldots,\balpha_m\}
\]
denote the propagating stationary points associated with the observation
direction \(\n\).
Since the source is arbitrary, we may choose
\[\hat{f}_j(\balpha_0)\neq 0\qquad\text{and}\qquad
\hat f_j(\balpha_r)=0,
\qquad
\nabla\hat f_j(\balpha_r)\neq0,
\qquad
r=1,\ldots,m.
\]
Theorem~\ref{thm:generic_ng}
implies
\[
u^{\rm ng}(R\n,\bx)
=
O(R^{-2}).
\]
The result is summarized in the following theorem.

\begin{theorem}[Dominance of the grazing contribution]
\label{thm:grazing_dominance}

The local spectral model constructed above satisfies the assumptions of
Theorems~\ref{thm:generic_ng}
and~\ref{thm:generic-grazing}.
For this model,
\[
u^{\rm ng}(R\n,\bx)
=
O(R^{-2}),
\]
whereas
\[
u^{\rm gra}(R\n,\bx)
=
C(\balpha_0,\bx)
R^{-3/2}
+
o(R^{-3/2}),
\qquad
C(\balpha_0,\bx)\neq0.
\]
Consequently,
\[
u(R\n,\bx)
=
C(\balpha_0,\bx)
R^{-3/2}
+
O(R^{-2}),
\]
so that the grazing contribution determines the leading far-field
asymptotics.

\end{theorem}

The theorem is an immediate consequence of the above construction
together with
Theorems~\ref{thm:generic_ng}
and~\ref{thm:generic-grazing}.

\begin{remark}[Geometric necessity of the grazing contribution]

The theorem above shows that the propagating and grazing contributions
originate from two independent geometric mechanisms.
The propagating contribution is generated by stationary points of the
phase on the outgoing Fermi sheets, whereas the grazing contribution is
determined by critical points of the phase restricted to the grazing
manifold.

The two mechanisms are independent.
Suppressing the leading propagating coefficients does not eliminate the
grazing contribution, which may therefore become the leading term of the
radiation field.

The example therefore establishes that the three-term decomposition is not merely a finer decomposition of the classical radiation field, but is geometrically necessary even at the level of the leading-order asymptotics.

\end{remark}

\subsection{The Helmholtz equation as a degenerate grazing geometry}
\label{sec:green_grazing}

The Helmholtz equation represents the simplest example where the generic assumptions fail, allowing the cancellation mechanism to be analyzed completely explicitly.  Owing to the spherical symmetry of its Fermi surface, the restriction of the phase to the grazing manifold is constant. Although the generic stationary phase analysis no longer applies, this exceptional symmetry makes the interaction between the real and complex grazing branches completely explicit.

We consider the free-space Green's function for the Helmholtz equation in $\R^d$ for $d\geq 2$:
\begin{equation}
\label{eq:green}
-\Delta u-k^2 u=\delta(\bx),
\end{equation}
where $k>0$ is the wave number. 
It is already known that the decay rate of the Green's function
\[
|u(\bx)|=O\left(R^{-\frac{d-1}{2}}\right),\quad\forall d\geq 2.
\]
The Helmholtz operator corresponds to the simplest homogeneous periodic
medium, for which the Floquet--Bloch transform reduces to the Fourier
transform.

Taking the Fourier transform gives
\[
u(\bx)=\frac{1}{(2\pi)^d}\int_{\R^d}\frac{e^{\i\xi\cdot\bx}}{|\xi|^2-k^2-\i 0}\d\xi.
\]
The band function is
\[
\mu(\xi)=|\xi|^2,
\]
so that $\bF$ is the sphere of radius $k$:
\[
\bF
=
\{\xi\in\R^d:\ |\xi|=k\}.
\]

Fix the observation direction
$
\n=\e_d,
$. Denote
$
\xi=(\xi',\xi_d)$ and
$\xi'\in\R^{d-1},
$
and 
$
B_k^{d-1}
=
\{\xi'\in\R^{d-1}:|\xi'|<k\}.
$
Therefore,
\[
\bx=R\n,\qquad
\xi\cdot\bx=R\xi_d.
\]
The outgoing branch of the Fermi surface is
\[
\bF^+
=
\{(\xi',\sqrt{k^2-|\xi'|^2}):\xi'\in B_k^{d-1}\},
\]
whose boundary is the grazing set
\[
G_\n
=
\{(\xi',0):|\xi'|=k\}.
\]
Across this interface, the real branch is naturally continued by the complex branch
\[
\bF_c^+
=
\left\{(\xi',i\sqrt{|\xi'|^2-k^2}):
\xi'\in\R^{d-1}\setminus B_k^{d-1}\right\}.
\]

Applying Theorem~\ref{thm:pole_surface} together with the
parametrizations of \(\bF^+\) and \(\bF_c^+\), and letting
\(\varepsilon\to0^+\), we obtain
\[
u(\bx)
=
\frac{\pi \i}{(2\pi)^d}
\int_{B_k^{d-1}}
\frac{
e^{\i R\sqrt{k^2-|\xi'|^2}}
}
{\sqrt{k^2-|\xi'|^2}}
\,\d\xi'
+
\frac{\pi}{(2\pi)^d}
\int_{\R^{d-1}\setminus B_k^{d-1}}
\frac{
e^{-R\sqrt{|\xi'|^2-k^2}}
}
{\sqrt{|\xi'|^2-k^2}}
\,\d\xi'.
\]
Since the integrands depend only on \(|\xi'|\), passing to polar
coordinates yields
\[
u(\bx)
=
\i C_d
\int_0^k
\frac{\rho^{d-2}}
{\sqrt{k^2-\rho^2}}
e^{\i R\sqrt{k^2-\rho^2}}
\,\d\rho
+
C_d
\int_k^\infty
\frac{\rho^{d-2}}
{\sqrt{\rho^2-k^2}}
e^{-R\sqrt{\rho^2-k^2}}
\,\d\rho,\quad
C_d=\frac{\pi\omega_{d-2}}{(2\pi)^d}.
\]

\subsubsection{Degenerate grazing geometry}

Following the decomposition in Section~\ref{sec:asym_decomp}, we split
the Green's function into the evanescent, non-grazing, and grazing
contributions. Let \(\chi\in C^\infty([0,k])\) satisfy that for a small $0<\delta_0<k$,
\[
\chi(\rho)=\begin{cases}
1,\quad |\rho- k|\leq \delta_0/2;\\
0,\quad |\rho- k|\geq \delta_0;\\
\text{smooth},\quad\text{otherwise}.
\end{cases}
\]
Then \[
u=u^{\rm evan}+u^{\rm ng}+u^{\rm gra}
\]
where
\begin{align*}
u^{\rm evan}(\bx)&=C_n
\int_{k}^\infty(1-\chi(\rho))
\frac{\rho^{d-2}}
{\sqrt{\rho^2-k^2}}
e^{-R\sqrt{\rho^2-k^2}}
\,\d\rho;\\
u^{\rm ng}(\bx)&=
\i C_n
\int_0^k(1-\chi(\rho))
\frac{\rho^{d-2}}
{\sqrt{k^2-\rho^2}}
e^{\i R\sqrt{k^2-\rho^2}}
\,\d\rho;\\
u^{\rm gra}(\bx)&=\i C_n
\int_0^k \chi(\rho)
\frac{\rho^{d-2}}
{\sqrt{k^2-\rho^2}}
e^{\i R\sqrt{k^2-\rho^2}}\,\d\rho+C_n
\int_k^{k+\delta_0}\chi(\rho)
\frac{\rho^{d-2}}
{\sqrt{\rho^2-k^2}}
e^{-R\sqrt{\rho^2-k^2}}
\,\d\rho.
\end{align*}
The non-grazing contribution is treated by the analysis of
Section~\ref{sec:asym_non_gra}, while the evanescent contribution is
exponentially small. Therefore, it remains to analyze the grazing
contribution.

Accordingly, we write
\[
u^{\rm gra}
=
C_n I^{\rm r}
+
C_n I^{\rm c},
\]
where
\[
I^{\rm r}
=\i 
\int_{k-\delta_0}^k \chi(\rho)
\frac{\rho^{d-2}}
{\sqrt{k^2-\rho^2}}
e^{\i R\sqrt{k^2-\rho^2}}\,\d\rho
\]
and
\[
I^{\rm c}
=\int_k^{k+\delta_0}\chi(\rho)
\frac{\rho^{d-2}}
{\sqrt{\rho^2-k^2}}
e^{-R\sqrt{\rho^2-k^2}}
\,\d\rho.
\]

Introducing the transversal variable
\[
s=\sqrt{|k^2-\rho^2|},
\]
we obtain
\[
I^{\rm r}
=
i
\int_0^{\sqrt{2k\delta_0-\delta_0^2}}
(k^2-s^2)^{\frac{d-3}{2}}
e^{\i Rs}
\chi(s)
\,\d S,
\]
and
\[
I^{\rm c}
=
\int_0^{\sqrt{2k\delta_0+\delta_0^2}}
(k^2+s^2)^{\frac{d-3}{2}}
e^{-Rs}\chi(s)
\,\d S.
\]

For every \(N\ge1\),
\[
(k^2\pm s^2)^{\frac{d-3}{2}}
=
\sum_{m=0}^{N-1}
(\pm1)^m c_m s^{2m}
+
O(s^{2N}),
\]
where \(c_m\) are the Taylor coefficients of
\((k^2+t)^{\frac{d-3}{2}}\) at \(t=0\). 

\begin{lemma}
For every \(m\ge0\),
\[
\int_0^{a}
s^{2m}\chi(s)e^{\i Rs}\,\d s
=
-\frac{(2m)!}{(\i R)^{2m+1}}
+
O(R^{-2m-2}),
\]
and
\[
\int_0^{b}
s^{2m}\chi(s)e^{-Rs}\,\d s
=
\frac{(2m)!}{R^{2m+1}}
+
O(R^{-2m-2}).
\]
\end{lemma}

\begin{proof}
Since
$\chi(s)=1$ near $0$ and
 $s^{2m}$ vanishes to order $2m$,
the first nonzero boundary term appears after
$
2m+1
$
integrations by parts. This gives
\[
\int_0^a
s^{2m}\chi(s)e^{\i Rs}\,\d s
=
-\frac{(2m)!}{(\i R)^{2m+1}}
+
O(R^{-2m-2}),
\]
which is exactly the first identity. The second follows in the same way.
\end{proof}

Consequently,
\[
I^{\rm r}
=
\i
\sum_{m=0}^{N-1}
(-1)^m c_m
\int_0^{a}
s^{2m}
e^{\i Rs}
\chi(s)\,\d s
+
O(R^{-2N-1})=
-
\sum_{m=0}^{N-1}
\frac{(2m)!c_m}{R^{2m+1}}
+
O(R^{-2N-1}),
\]
and
\[
I^{\rm c}
=
\sum_{m=0}^{N-1}
c_m
\int_0^{b}
s^{2m}
e^{-Rs}
\chi(s)\,\d S
+
O(R^{-2N-1})=
\sum_{m=0}^{N-1}
\frac{(2m)!c_m}{R^{2m+1}}
+
O(R^{-2N-1}),
\]
where
\[
a=\sqrt{2k\delta_0-\delta_0^2},
\qquad
b=\sqrt{2k\delta_0+\delta_0^2}.
\]

Therefore, every algebraic term in the grazing asymptotic expansions
cancels identically between the real and complex branches. In
particular, the \(O(R^{-1})\) contribution predicted by the generic
grazing analysis vanishes completely in the Helmholtz case.

This complete cancellation is a direct consequence of the degenerate grazing geometry. Indeed, the phase function satisfies
\[
\Phi_\n(\bgamma)
=
\n\cdot\balpha(\bgamma)
=
\alpha_n(\bgamma),
\]
and its restriction to the grazing manifold is identically constant,
\[
\Phi|_{G_\n}\equiv0.
\]
Hence the grazing manifold contains no non-degenerate stationary point, and the Morse assumption underlying the generic grazing theory fails identically. The stationary phase mechanism responsible for the generic grazing asymptotics is therefore replaced by an exact cancellation between the real and complex grazing branches. Consequently, the absence of a grazing contribution in the Helmholtz asymptotics is best understood as a consequence of this exceptional symmetry rather than as the generic behavior of grazing geometries.

\begin{remark}[Other degenerate grazing geometries]

The Helmholtz equation provides one example of a degenerate grazing
geometry, where the restriction of the phase to the grazing manifold is
identically constant. More generally, the generic assumptions of
Section~\ref{sec:decay_gra} may fail in several different ways,
leading to qualitatively different asymptotic behaviors.

Typical examples include degenerate critical points of
\(\Phi|_{G_\n}\), degeneracies of the second fundamental form of
\(G_\n\), and singular grazing manifolds. These cases generally require techniques beyond the stationary phase
analysis developed in the present work and appear to be a natural next
step toward a complete asymptotic theory of Green's functions in
periodic media.

\end{remark}

The explicit cancellation obtained above also clarifies the relationship between the SSA decomposition and the classical propagating--regular decomposition. We now return to the original representation and show that the grazing contribution is not absent from the classical decomposition; rather, it is distributed between the propagating and regular parts and disappears only through an order-by-order cancellation.

\subsubsection{Comparison with the classical decomposition}

This cancellation also explains the structure of the classical propagating--regular decomposition.  The Helmholtz equation provides a particularly transparent example, since both decompositions can be computed explicitly. We recover the classical
propagating--regular decomposition
\[
u=u^{\rm prop}+u^{\rm reg},
\]
where
\[
u^{\rm prop}
=
\i C_d
\int_0^k
\frac{\rho^{d-2}}
{\sqrt{k^2-\rho^2}}
e^{\i R\sqrt{k^2-\rho^2}}
\,\d\rho=u^{\rm ng}+C_d I^r:=C_d I^+,
\]
and
\[
u^{\rm reg}
=
C_d
\int_k^\infty
\frac{\rho^{d-2}}
{\sqrt{\rho^2-k^2}}
e^{-R\sqrt{\rho^2-k^2}}
\,\d\rho=u^{\rm evan}+C_d I^c:=C_d I^-.
\]
Thus the classical propagating part consists of the non-grazing contribution together with the real grazing contribution, whereas the regular part consists of the evanescent contribution together with the complex grazing contribution.

Applying the same change of variables as above gives
\[
I^{+}
=
\i
\int_0^k
(k^2-s^2)^{\frac{d-3}{2}}
e^{\i Rs}\,\d s,
\]
and
\[
I^{-}
=
\int_0^\infty
(k^2+s^2)^{\frac{d-3}{2}}
e^{-Rs}\,\d s.
\]

The following examples illustrate this mechanism in dimensions three through five.

\bigskip
\noindent
\textbf{Example 1 (three dimensions). } For \(d=3\), 
\[
I^+=\i\int_0^k e^{\i R s}\d s=\frac{e^{\i k R}}{R}-\frac{1}{R}
\]
and
\[
I^-=\int_0^\infty e^{-R s}\d s=\frac{1}{R}.
\]
Therefore,
\[
I^++I^-=\frac{e^{\i k R}}{R}.
\]
The canceled endpoint term is of the same order $R^{-1}$
 as the outgoing radiation itself. Thus the leading behavior of the propagating part is not radiative; it is determined only after cancellation with the regular part.

\bigskip
\noindent
\textbf{Example 2 (four dimensions). } For \(d=4\),  the oscillatory integrals admit explicit
representations in terms of Bessel and Struve functions:
\begin{align*}
I^{+}&=\i\int_0^k\sqrt{k^2-s^2}e^{\i R s}\d s\\
&=\frac{\i k\pi}{2R}
\bigl(
i\mathbf{H}_1(kR)
+
J_1(kR)
\bigr)\\
&=\frac{\i k \pi}{2R}H_1^{(1)}(kR)-\frac{k\pi}{2R}\bigl(
\mathbf{H}_1(kR)
-
Y_1(kR)
\bigr)
\end{align*}
and
\[
I^{-}
=\int_0^k\sqrt{k^2+s^2}e^{-R s}\d s
=
\frac{k\pi}{2R}
\bigl(
\mathbf{H}_1(kR)
-
Y_1(kR)
\bigr),
\]
where \(J_1\), \(Y_1\), $H_1^{(1)}$ and \(\mathbf H_1\) denote the Bessel, Neumann, Hankel and Struve functions of order one, respectively.
Therefore,
\[
I^{+}+I^{-}
=
\frac{\i k\pi}{2R}
H_1^{(1)}(kR).
\]
Using the standard asymptotic expansions
\[
\mathbf H_1(z)-Y_1(z)
=
\frac{2}{\pi}
+O(z^{-2})
,
\]
and
\[
H_1^{(1)}(z)=\sqrt{\frac{2}{\pi z}}e^{\i(z-3\pi/4)}\left(1+O(z^{-1}\right)
\]
Therefore, 
\begin{align*}
I^+=\frac{\i k\pi}{2R}\sqrt{\frac{2}{k\pi R}}e^{\i (kR-3\pi/4)}\left(1+O(R^{-1})\right)-
\frac{k\pi}{2R}\left(\frac{2}{\pi}+O(R^{-2})\right)=-\frac{k}{R}+O(R^{-3/2})
\end{align*}
and
\[
I^{-}
=
\frac{k\pi}{2R}\left(\frac{2}{\pi}+O(R^{-2}\right)=\frac{k}{R}+O(R^{-2}).
\]
Therefore, the leading terms for both $I^+$ and $I^-$ are $O(R^{-1})$. However,\[
I^{+}+I^{-}
=\i R^{-3/2}\sqrt{\frac{k\pi}{2}}e^{\i (kR-3\pi/4)}+O(R^{-5/2})
\]
which agrees with the classical far-field asymptotics.
The canceled endpoint contribution is of order $R^{-1}$, which dominates the true radiation term of order $R^{-3/2}$.

\bigskip

\noindent
\textbf{Example 3 (five dimensions). } For \(d=5\), 
\[
(k^2-s^2)^{\frac{5-3}{2}}
=
k^2-s^2,
\]
direct computation gives
\[
I^+=\i\int_0^k (k^2-s^2) e^{\i R s}\d s
=
e^{\i kR}
\left(
-\frac{2\i k}{R^2}
+\frac{2}{R^3}
\right)
-
\left(
\frac{k^2}{R}
+\frac{2}{R^3}
\right),
\]
while
\[
I^-
=\int_0^k (k^2+s^2)e^{- R s}\d s=
\frac{k^2}{R}
+\frac{2}{R^3}.
\]
Consequently,
\[
I^++I^-
=
e^{\i kR}
\left(
-\frac{2\i k}{R^2}
+\frac{2}{R^3}
\right).
\]
Again, the leading $R^{-1}$  behavior of the propagating part is canceled completely, while the genuine radiation starts only at order $R^{-2}$.

These examples show that the cancellation observed in the classical propagating--regular decomposition is not accidental. It is a direct consequence of the degenerate grazing geometry of the Helmholtz equation, where the real and complex grazing branches contribute equally with opposite signs. In generic periodic media this symmetry is lost. The grazing manifold is non-degenerate, the cancellation disappears, and the grazing contribution survives as an independent asymptotic component.

From this perspective, the SSA decomposition separates propagating, grazing, and evanescent waves according to their geometric origin on the pole surface, whereas the classical propagating--regular decomposition combines contributions generated by different geometric mechanisms. The SSA decomposition therefore provides a more faithful geometric description of the far-field asymptotics. In this sense, the Helmholtz equation serves not merely as an example but as an explicit explanation of why the geometric decomposition introduced in this work is necessary.

\section*{Statements and Declarations}

\subsection*{Competing Interests}

The author has no competing interests to declare.

%

\appendix

\section*{Appendix}
\addcontentsline{toc}{section}{Appendix}
\numberwithin{equation}{section}

\section{Background on Floquet–Bloch Transform}
\label{sec:fbt}

For the reader's convenience, we briefly recall the definition and basic mapping properties of the Floquet--Bloch transform. Standard references include \cite{Reed1978,Kuchm1993}. We also state the extension to arbitrary dimensions following \cite{Lechl2016}.

Suppose $\phi\in C_0^\infty(\R^d)$ be a function, which is compactly supported and smooth in $\R^d$. Then the Floquet-Bloch transform of $\phi$ with respect to the periodicity cell $\Omega$ is defined as follows:
\begin{equation}
\label{eq:FB}
(\J\phi)(\balpha,\bx)=\sum_{\bj\in\Z^d}\phi(\bx+2\pi\bj)e^{-\i 2\pi\balpha\cdot\bj},\quad \balpha,\,\bx\in\R^d.
\end{equation}
Since $\phi$ is compactly supported, we notice that the transform is well-defined for all $\balpha$ and $\bx$ in $\R^d$. It is also easily checked that $\J\phi$ depends smoothly on $\bx$ and analytically on $\balpha$. Moreover, we also have the following further properties:
\begin{itemize}
\item For fixed $\bx\in\R^d$, the function is periodic with respect to $\balpha$ where $\Bz$ is its periodicity cell:
\[
(\J\phi)(\balpha+\bj,\bx)=(\J\phi)(\balpha,\bx);
\]
\item for fixed $\balpha\in\R^d$, the function is $\balpha$-quasi-periodic with respect to $\balpha$:
\[
(\J\phi)(\balpha,\bx+2\pi\bj)=e^{\i 2\pi\balpha\cdot\bj}(\J\phi)(\balpha,\bx).
\]
\end{itemize}
From these two results, it is sufficient for us to restrict $(\balpha,\bx)\in \Bz\times\Omega$.

\begin{remark}
We can also treat the Floquet-Bloch transform as a generalized Fourier series, where the Fourier coefficients are functions instead of scalars. With this in mind, one also gets the inverse Floquet-Bloch transform easily:
\begin{equation}
\label{eq:iFB}
(\J^{-1}\psi)(\bx+2\pi\bj)=\int_\Bz \psi(\balpha,\bx)e^{\i 2\pi\balpha\cdot\bj}\d\balpha,\quad \bx\in\Omega.
\end{equation}
\end{remark}

Now we will extend the Floquet-Bloch transform defined for compactly supported smooth functions to more general function spaces. To this end, we will introduce some Sobolev spaces. First define Bessel potential spaces by
\[
H^s(\R^d):=\left\{\phi\in\mathcal{D}'(\R^d):\,\int_{\R^d}(1+|\bz|^2)^s\left|\hat{\phi}(\bz)\right|\d \bz<\infty\right\},\quad s\in\R.
\]
In particular, when $s=0$, $H^s(\R^d)$ is the classic space $L^2(\R^d)$.
Then the weighted spaces are defined by
\[
H^s_r(\R^d):=\left\{\phi\in\mathcal{D}'(\R^d):\,(1+|\cdot|^2)^{r/2}\phi(\cdot)\in H^s(\R^d)\right\},\quad s,r\in\R.
\]
When we study the Floquet-Bloch transform for any function $\phi\in H^s_r(\R^d)$, we also need the following spaces which are defined in the domain $\Bz\times\Omega$. First define the subspace $H^s_\balpha(\Omega)$ of $\mathcal{D}'_\balpha(\R^d)$ which contains all $\balpha$-quasi-periodic distributions with the finite norm
\[
\|\phi\|_{H^s_\balpha(\Omega)}:=\left(\sum_{\bj\in\Z^d}(1+|\bj|^2)^s\left|\hat{\phi}(\bj)\right|^2\right)^{1/2},
\]
where $\hat{\phi}(\bj)$ is the $\bj$-th Fourier coefficient. When $s=0$, $H^s_\balpha(\Omega)$ is also denoted by $L^s(\Omega)$.

Then we define the Sobolev spaces $L^2(\Bz;H^s_\balpha(\Omega))$ of distributions $\mathcal{D}'(\R^d\times\R^d)$ with the norm:
\[
\|\psi\|_{L^2(\Bz;H^s_\balpha(\Omega))}:=\left(\sum_{\bj\in\Z^d}(1+|\bj|^2)^s\int_\Bz\left|\hat{\psi}(\balpha,\bj)\right|^2\d\balpha\right)^{1/2}<\infty,
\]
where $\hat{\psi}(\balpha,\bj)$ is the $\bj$-th Fourier coefficient of $\psi(\balpha,\cdot)$. When $s=0$,
\[
\|\psi\|_{L^2(\Bz;L^2_\balpha(\Omega))}=\left(\int_\Bz \|\psi(\balpha,\cdot)\|^2_{L^2(\Omega)}\d\alpha\right)^{1/2}=\left(\int_\Bz \int_\Omega |\psi(\balpha,\bx)|^2\,\d\bx\,\d\balpha\right)^{1/2}.
\]
We can also extend the definition of the norm of the space $H_0^\ell(\Bz;H^s_\balpha(\Omega))$ for any $\ell\in\N$:
\[
\|\psi\|_{H_0^\ell(\Bz;H^s_\balpha(\Omega))}:=\left[\sum_{\bgamma\in \N^d,\,|\bgamma|\leq\ell}\int_\Bz \left\|\partial^\bgamma_\balpha\psi(\balpha,\cdot)\right\|^2_{H^s_\balpha(\Omega)}\d\balpha\right]^{1/2},
\]
where $\bgamma=(\gamma_1,\dots,\gamma_d)\in\N^d$ and
\[
\partial^\bgamma_\balpha=\frac{\partial^{\gamma_1}}{\partial\alpha_1^{\gamma_1}}\cdots\frac{\partial^{\gamma_n}}{\partial\alpha_1^{\gamma_d}},\quad |\bgamma|=\gamma_1+\cdots+\gamma_d.
\]
Here the subscript $0$ indicates that the function is periodic with respect to $\balpha$ with the periodicity cell $\Bz$.

Then we can use the interpolation between spaces to extend the index $\ell$ to any positive number $r=\ell+\theta$  where $0<\theta<1$ (see \cite{Bergh1976}):
\[
H_0^{r}(\Bz;H^s_\balpha(\Omega)):=\left[H_0^{\ell}(\Bz;H^s_\balpha(\Omega)),H_0^{\ell+1}(\Bz;H^s_\balpha(\Omega))\right]_\theta;
\]
With a duality argument we can also define the space with negative index $H_0^{-r}(\Bz;H^s_\balpha(\Omega))$. Now we are prepared to introduce the properties of the Floquet-Bloch transform.

\begin{theorem}[Theorem 4, \cite{Lechl2016}]
(a) The Floquet-Bloch  transform $\J$ extends  to an isometric isomorphism between $L^2(\R^d)$ and $L^2(\Bz;L^2(\Omega))$, and the inverse is given by \eqref{eq:iFB}.\\
(b) For $s,r\in\R$, the  Floquet-Bloch transform $\J$ extends to an isomorphism between $H^s_r(\R^d)$ to $H_0^r(\Bz;H^s_\balpha(\Omega))$. The inverse transform is still given by \eqref{eq:iFB} with equality in the sense of the norm of $H_r^s(\R^d)$.
\end{theorem}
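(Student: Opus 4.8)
The plan is to follow the classical scheme for the Floquet--Bloch transform: establish part (a) by a Plancherel-type identity on the dense subspace $C_0^\infty(\R^n)$, and then bootstrap to the weighted Sobolev scale in part (b). For (a), fix $\phi\in C_0^\infty(\R^n)$ and $\bx\in\R^n$; by \eqref{eq:FB} the map $\balpha\mapsto(\J\phi)(\balpha,\bx)$ is a trigonometric series on the torus $\Bz$ with coefficients $\phi(\bx+2\pi\bj)$, $\bj\in\Z^n$, only finitely many of which are nonzero. Parseval's identity on $\Bz$ gives $\int_\Bz|(\J\phi)(\balpha,\bx)|^2\,\d\balpha=\sum_{\bj\in\Z^n}|\phi(\bx+2\pi\bj)|^2$, and integrating over $\bx\in\Omega$ while using that the translates $\Omega+2\pi\bj$ over $\bj\in\Z^n$ tile $\R^n$ up to a null set yields $\|\J\phi\|_{L^2(\Bz;L^2(\Omega))}=\|\phi\|_{L^2(\R^n)}$. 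Since $C_0^\infty(\R^n)$ is dense in $L^2(\R^n)$, $\J$ extends uniquely to a linear isometry. Surjectivity follows by checking that the operator defined by \eqref{eq:iFB} maps $L^2(\Bz;L^2(\Omega))$ into $L^2(\R^n)$ (the same Parseval computation run in reverse) and inverts $\J$ on the dense subspaces $C_0^\infty(\R^n)$ and the span of products of trigonometric polynomials in $\balpha$ with smooth functions in $\bx$; continuity then propagates both identities, so $\J$ is an isometric isomorphism with inverse \eqref{eq:iFB}.

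For (b), the two structural facts are that $\J$ intertwines $\bx$-differentiation, $\J(\partial_{x_k}\phi)=\partial_{x_k}(\J\phi)$, and that $\alpha$-differentiation reduces to multiplication by a coordinate, which is bounded on $\Omega$, up to a shift: differentiating \eqref{eq:FB} gives $\partial_{\alpha_k}(\J\phi)(\balpha,\bx)=-\i\bigl(\J(y_k\phi)(\balpha,\bx)-x_k\,(\J\phi)(\balpha,\bx)\bigr)$, where $y_k\phi$ denotes the function $\by\mapsto y_k\phi(\by)$. Combining the first identity with (a) yields $\|\partial^\gamma_\bx\phi\|_{L^2(\R^n)}=\|\partial^\gamma_\bx\J\phi\|_{L^2(\Bz;L^2(\Omega))}$ for every multi-index $\gamma$, hence a norm equivalence $H^s(\R^n)\cong L^2(\Bz;H^s_\balpha(\Omega))$ for nonnegative integer $s$, extended to all real $s$ by interpolation and duality; this is the case $r=0$. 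Iterating the second identity together with (a) applied to the polynomials $\bx^\beta\phi$ gives $H^0_r(\R^n)\cong H_0^r(\Bz;L^2_\balpha(\Omega))$ for nonnegative integer $r$, likewise extended to real $r$; this is the case $s=0$. Because the $\bx$-derivatives act only on the fibre $H^s_\balpha(\Omega)$ and the $\balpha$-derivatives only on the base, the two mechanisms decouple, and combining them with the compatibility of the interpolation functors on the mixed scale produces the isomorphism $H^s_r(\R^n)\to H_0^r(\Bz;H^s_\balpha(\Omega))$ for all $s,r\in\R$; the inverse is still \eqref{eq:iFB}, since that map is already the $L^2$-inverse and the extension is bounded in the refined norms.

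The main obstacle is not the $L^2$ statement of (a), which is Parseval plus a tiling argument, but the bookkeeping in (b): one must match the interpolation- and duality-defined norms on $H_0^r(\Bz;H^s_\balpha(\Omega))$ with the natural Floquet--Bloch characterization, and verify that the two regularising mechanisms --- decay of $\phi$ at infinity versus $\balpha$-smoothness, and $\bx$-smoothness versus decay of the $\bj$-indexed Fourier coefficients --- genuinely separate, so that interpolation can be carried out in each variable independently. Handling the weight $(1+|\cdot|^2)^{r/2}$ uniformly for non-integer and negative $r$ is the delicate point, and this is precisely where the argument of \cite{Lechl2016} is adapted, the only new feature being that every estimate is dimension-independent.
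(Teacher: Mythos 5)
The paper does not prove this statement: it is quoted verbatim as Theorem~4 of \cite{Lechl2016}, with the appendix merely remarking that the one-dimensional results of that reference "extend" to $\R^n$. Your argument is correct and is exactly the standard route that the cited reference takes: Parseval on the torus $\Bz$ plus the tiling $\R^n=\cup_{\bj}(\Omega+2\pi\bj)$ for the $L^2$ isometry, the commutation $\J\partial_{x_k}=\partial_{x_k}\J$ for the fibre regularity, the identity $\partial_{\alpha_k}(\J\phi)=-\i\bigl(\J(y_k\phi)-x_k\J\phi\bigr)$ converting $\balpha$-smoothness into polynomial weights, and interpolation/duality to reach non-integer and negative indices --- all of which is dimension-independent, so nothing new is needed beyond the bookkeeping you describe.
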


The following identities will be used repeatedly in the sequel:
\begin{itemize}
\item The Floquet-Bloch transform commutes with periodic functions with the same periodicity:
\[
[\J (p u)](\balpha,\bx)=p(\bx)(\J u)(\balpha,\bx);
\]
\item the Floquet-Bloch transform commutes with partial differential operators with respect to the space variable $\bx$:
\[
\left[\J\left(\frac{\partial u}{\partial x_j}\right)\right](\balpha,\bx)=\frac{\partial}{\partial x_j}(\J u)(\balpha,\bx),\quad j=1,2,\dots,d.
\]
\end{itemize}

\section{Technical lemmas}

\subsection{Local geometry near the Fermi surface}
\label{sec:local_expand}
This appendix derives the local quadratic normal form of the Fermi surface near a regular point. The expansion is the basic local model underlying the directional complex deformation and the grazing asymptotic analysis in the main text.

We fix $j\in J(\lambda)$ and omit the index $j$ for simplicity. 
Let $\balpha_0\in\bF_\lambda$, such that $\mu(\balpha_0)=\lambda$. Since $\nabla\mu(\balpha_0)\neq0$, the level set
$\bF_\lambda=\{ \balpha\in\Bz:
\mu(\balpha)=\lambda
\}$
is a smooth hypersurface near $
\balpha_0$.

Let $\e_1=\nu(\balpha_0)$ be the unit normal vector at $\balpha_0$. Choose an orthonormal basis
$(\e_2,\dots,\e_d)$ of the tangent space $T_{\balpha_0}\bF_\lambda=\e_1^\perp$
 such that the quadratic form $
D^2\mu(\balpha_0)
\big|_{T_{\balpha_0}\bF_\lambda}$ 
is diagonalized. Namely,
\[
D^2\mu(\balpha_0)[\e_k,\e_\ell]
=
\kappa_k(\balpha_0)\delta_{k\ell},
\qquad
2\le k,\ell\le d,
\]
where $
\kappa_2(\balpha_0),\dots,
\kappa_d(\balpha_0)$
are the principal curvatures of the Fermi surface at
$\balpha_0$ up to the normalization factor $v(\balpha_0)$.

We introduce local orthogonal coordinates $(x_1,{\bx}')
=(x_1,x_2,\dots,x_d)$ through
\[
\balpha
=
\balpha_0
+
x_1\e_1
+
\sum_{k=2}^d x_k\e_k.
\]
In these coordinates, the defining function of the Fermi surface admits the following local expansion.

\begin{proposition}[Local quadratic expansion]
\label{prop:local_taylor}

In the above coordinates,
$
\mu_j(\balpha)-\lambda$ 
admits the expansion
\[
\mu(\balpha)-\lambda
=
v(\balpha_0)x_1
+
\frac12
\sum_{k=2}^d
\kappa_k(\balpha_0)x_k^2
+\frac{a_{11}}{2}x_1^2+x_1 L(\bx')+o(|\bx|^2).
\]
\end{proposition}

\begin{proof}

Since $
\mu(\balpha_0)=\lambda$, 
Taylor expansion gives
\[
\mu(\balpha)-\lambda
=
\nabla\mu(\balpha_0)\cdot
(\balpha-\balpha_0)
+
\frac12
D^2\mu(\balpha_0)
[\balpha-\balpha_0,\balpha-\balpha_0]
+
O(|\bx|^3).
\]
Since $\nabla\mu(\balpha_0)=
v(\balpha_0)\e_1$, the linear term  
\[
\nabla\mu(\balpha_0)\cdot(\balpha-\balpha_0)=v(\balpha_0)x_1
\]

Next decompose the quadratic term into tangential,
mixed, and normal components. Recall that the Hessian matrix is diagonalized in the tangential hyperplane expanded by ${\e_2,\dots,\e_{d}}$, it has the form of
\[
D^2\mu(\balpha_0)=\left(
\begin{matrix}
a_{11} & a_{12} & \cdots & a_{1d}\\
a_{12} & \kappa_2 & & 0 \\
\vdots & &\ddots & \\
a_{1d} & 0 & & \kappa_d
\end{matrix}
\right)
\]
and all the components are real. Therefore,
\[
\frac12
D^2\mu(\balpha_0)
[\balpha-\balpha_0,\balpha-\balpha_0]
=
Q(\bx')
+\frac{1}{2}a_{11} x_1^2+x_1 L(\bx'),
\]
where
\[
L(\bx')=\sum_{k=2}^d a_{1k} x_k,\quad
Q(\bx')
=
\frac12
\sum_{k=2}^d
\kappa_k(\balpha_0)x_k^2.
\]
Combining the above expansions yields the result.
\end{proof}

\begin{corollary}[Directional complex expansion]
\label{cor:directional_expansion}

Let $\bomega=(\bomega_1,\bomega_2,\dots,\bomega_d)\in\mathbb S^{d-1}$.
Then, in the above coordinates,
\[
\mu(\balpha+s\e_1+\i t\bomega)-\lambda
=
A(\balpha,\bomega)t^2+B(\balpha)(s+\i \bomega_1 t)+C(\balpha) (s+\i \bomega_1 t)^2+(s+\i \bomega_1 t)t D(\balpha,\bomega)+o(s^2+t^2),
\]
where
\[
A(\balpha,\bomega)
=
-\frac12
\sum_{k=2}^d
\kappa_k(\balpha)\bomega_k^2,
\qquad
B(\balpha)
=v(\balpha)\]\[ C(\balpha)=\frac{1}{2}\frac{\partial^2\mu(\balpha)}{\partial x_1^2},\qquad D(\balpha,\bomega)=\sum_{k=2}^d \frac{\partial^2 \mu(\balpha)}{\partial x_1 x_k}\bomega_k.
\]
Moreover, the term $o(t^3)$ holds  uniform with respect to $\bomega\in\mathbb S^{d-1}$.
\end{corollary}

\begin{proof}
Substitute
\[
\balpha_0+s\e_1+\i t\v
\]
into the expansion of
Proposition~\ref{prop:local_taylor}. The uniformity of
the remainder follows from Taylor's theorem and the
compactness of $\mathbb S^{d-1}$.
\end{proof}

\begin{lemma}[Uniform nondegeneracy of the crossing polynomial]
\label{lem:crossing-non-degenerate}
Let the coefficients be defined as above and Assumption \ref{asp1}-\ref{asp3} hold. There exists
\(
\delta_0>0
\)
such that
\[
|A(\balpha,\v)-C(\balpha)\bomega_1^2|\ge c_0>0,\quad\forall\,\balpha\in\overline{U_{\balpha_0}},\,|\bomega_1|\le\delta_1,
\,\bomega\in\mathbb S^{d-1}.
\]
Moreover, there exist
\(
\epsilon_1,\delta_1>0
\)
such that
\[
|B(\balpha)+2C(\balpha)s+D(\balpha,\bomega)t|
\ge
\frac{v_{\min}}2,\qquad\forall\,
|s|\le\epsilon_1,
\,
0\le t\le\delta_2.
\]
\end{lemma}

\begin{proof}

By Lemma~\ref{lem:uniform-geometry} and Assumption \ref{asp2}
\[
|A(\balpha,\bomega)|
\ge
\kappa_{\min}|\bomega'|^2=\kappa_{\min}(1-\bomega_1^2),
\qquad
|C(\balpha)|
\le M.
\]
Consequently, choosing
\(
\delta_1>0
\)
sufficiently small yields 
\[
|A(\balpha,\bomega)-C(\balpha)\bomega_1^2|
\ge
\kappa_{\min}(1-\bomega_1^2)-M\bomega_1^2
\ge
\frac{\kappa_{\min}}2.
\]
This proves the first claim.

For the second claim, Lemma~\ref{lem:uniform-geometry} gives
\[
B(\balpha)=|\nabla\mu_j(\balpha)|\ge v_{\min}>0.
\]
Since
\[
|C(\balpha)|+|D(\balpha,\v)|
\le M,
\]
for sufficiently small $\epsilon_1,\delta_2>0$,
\[
|2C(\balpha)s+D(\balpha,\v)t|
\le
M(2|s|+t)
\le
\frac{v_{\min}}2,
\]
and therefore
\[
|B(\balpha)+2C(\balpha)s+D(\balpha,\v)t|
\ge
\frac{v_{\min}}2.
\]

\end{proof}
This lemma guarantees the uniform solvability of the local crossing equation used throughout the directional complex deformation arguments.

\subsection{Periodic boundary cancellation}
\label{sec:bdr}

This appendix establishes the periodic boundary identification underlying the slice-wise contour deformation. The main consequence is the cancellation of the boundary contributions arising from opposite faces of the Brillouin zone.

At the beginning of this section, we introduce the face identification map
\begin{equation}
\label{eq:trans}
T\balpha=(\alpha_1,\dots,-\alpha_j,\dots,\alpha_d),
\end{equation}
defined for $\balpha\in\Cz_j^\pm$. Then
\[
T(\Cz_j^\pm)=\Cz_j^\mp,
\qquad
T^2=\mathrm{Id}.
\]

For a fixed slice
\[
\ell_\bgamma=\{\bgamma\t+s\n:\,s\in\R\},
\]
let $\bgamma\t+\ell_l(\bgamma)\n$ and $\bgamma\t+\ell_r(\bgamma)\n$
denote the left and right endpoints of
$\ell_\bgamma\cap\Bz$, respectively.

The following lemma describes the periodic identification of these endpoints.

\begin{lemma}[Periodic identification of opposite faces]
\label{lm:sym}

Fix $\n$ and $\bgamma$, and let
\[
\balpha_l
=
\bgamma\t+\ell_l(\bgamma)\n
\]
be the left endpoint of
$\ell_\bgamma\cap\Bz$.
Then there exists
$\widetilde{\bgamma}\in D$
such that
\[
T\balpha_l
=
\widetilde{\bgamma}\t
+
\ell_r(\widetilde{\bgamma})\n.
\]
In other words, the map $T$ transforms left endpoints into right endpoints, and vice versa.

\end{lemma}

\begin{proof}

Since $\balpha_l\in\partial\Bz$, it lies on a boundary face
$\Cz_j^\sigma$. By the periodicity property in Theorem~\ref{th:kuch2},
$T\balpha_l\in\Cz_j^{-\sigma}$. Hence there exist
$\widetilde{\bgamma}\in D$
and
$s_0\in\R$
such that
\[
T\balpha_l
=
\widetilde{\bgamma}\t+s_0\n.
\]

Let $\nu$ denote the exterior unit normal vector of $\partial\Bz$.
Since $\alpha_l$ is the left endpoint of
$\ell_\bgamma\cap\Bz$,
$\nu\cdot\n\le0$ on $\Cz_j^\sigma$. The exterior normal vector on the opposite face $\Cz_j^{-\sigma}$ has the opposite orientation. Therefore
$\nu\cdot\n\ge0$ on $\Cz_j^{-\sigma}$. It follows that
$T\alpha_l$ is the right endpoint of
$\ell_{\widetilde{\bgamma}}\cap\Bz$.
Consequently, $s_0
=
\ell_r(\widetilde{\bgamma}),
$
which proves the stated implication. The converse follows immediately
from the identity $T^2=\mathrm{Id}$.

\end{proof}

Recall that the complex domain with fixed $\bgamma$: 
\[
\bR(\bgamma):=\big\{s+\i\delta:\,s\in(\ell_l(\bgamma),\ell_r(\bgamma)),\,\delta\in(0,\sigma(\bgamma\t+s\n))\big\}.
\]
Then we define the left and right boundaries as
\begin{eqnarray}
\label{eq:left_C}
C_\bgamma^l&:=\{\ell_l(\bgamma)+\i t:\,t\in[0,\sigma(\bgamma\t+\ell_l(\bgamma)\n)]\},\\\label{eq:right_C}
C_\bgamma^r&:=\{\ell_r(\bgamma)+\i t:\,t\in[0,\sigma(\bgamma\t+\ell_r(\bgamma)\n)]\}.
\end{eqnarray}
The following result expresses the cancellation of the boundary
contributions produced by the slice-wise contour deformation.

\begin{lemma}[Cancellation of periodic boundary integrals]
\label{lm:boundary_cancel}
Assume that $C_\bgamma^l$ and $C_\bgamma^r$ are defined by
\eqref{eq:left_C} and \eqref{eq:right_C}, respectively.
Let $F$ be a continuous $\Bz$-periodic function. Then
\begin{equation}
\label{eq:boundary_cancel}
\int_D
\int_{C_\bgamma^l}
F(\bgamma\t+s\n)\,\d s\,\d\bgamma
=
\int_D
\int_{C_\bgamma^r}
F(\bgamma\t+s\n)\,\d s\,\d\bgamma.
\end{equation}
\end{lemma}

\begin{proof}

By Lemma~\ref{lm:sym}, for every
$\bgamma\in D$
there exists a unique
$\widetilde{\bgamma}\in D$
such that
\[
T(C_\bgamma^l)
=
C_{\widetilde{\bgamma}}^r.
\]
Since both curves are parameterized by the same vertical variable
$t$, the periodic identification preserves the orientation of the
boundary integrals.

Moreover, the periodicity of $F$ implies that
\[
F(\bgamma\t+z\n)
=
F(\widetilde{\bgamma}\t+T(z)\n)
\]
for corresponding points
$z\in C_\bgamma^l$
and
$T(z)\in C_{\widetilde{\bgamma}}^r$.
Therefore
\[
\int_{C_\bgamma^l}
F(\bgamma\t+z\n)\,\d z
=
\int_{C_{\widetilde{\bgamma}}^r}
F(\widetilde{\bgamma}\t+z\n)\,\d z.
\]

Finally, the correspondence $\bgamma\longmapsto\widetilde{\bgamma}$
is a bijection of $D$. Integrating over $D$ and performing the change
of variables
$\widetilde{\bgamma}=\widetilde{\bgamma}(\bgamma)$
yields 
\[
\int_D
\int_{C_\bgamma^l}
F(\bgamma\t+z\n)\,\d z\,\d\bgamma
=
\int_D
\int_{C_\bgamma^r}
F(\bgamma\t+z\n)\,\d z\,\d\bgamma.
\]

The proof is complete.

\end{proof}

\subsection{Stationary phase on manifolds}

For completeness, we record the stationary phase theorem used in the proof of Section \ref{sec:asym_non_gra}. The result is standard and is included only for reference. We refer to Hörmander~\cite[Chapter~VII]{HormanderI} for the classical stationary phase theorem.

\begin{theorem}[Stationary phase]
Let M be a compact m-dimensional manifold and
\[
I(R)
=
\int_M
a(x)e^{iR\phi(x)}
\,\d\sigma(x).
\]
Assume that all critical points of $\phi$ are non-degenerate. Then
\[
I(R)
=
(2\pi R)^{-m/2}
\sum_j
\frac{
a(x_j)
}
{
\sqrt{
|\det D^2\phi(x_j)|
}
}
e^{iR\phi(x_j)}
e^{i\pi\sigma_j/4}
+
O(R^{-m/2-1}).
\]

\end{theorem}

\bibliographystyle{plain}
\bibliography{aa-biblio}

\end{document}